\newtheorem{theorem}{Theorem}[section]
\newtheorem{proposition}[theorem]{Proposition}
\newtheorem{lemma}[theorem]{Lemma}
\newtheorem{remark}[theorem]{Remark}
\numberwithin{equation}{section}
\begin{document}

\title{Random walks on discrete cylinders with large bases and random interlacements}
\author{David Windisch}
\date{}
\maketitle

\begin{abstract}
Following the recent work of Sznitman \cite{S08}, we investigate the microscopic picture induced by a random walk trajectory on a cylinder of the form $G_N \times {\mathbb Z}$, where $G_N$ is a large finite connected weighted graph, and relate it to the model of random interlacements on infinite transient weighted graphs. Under suitable assumptions, the set of points not visited by the random walk until a time of order $|G_N|^2$ in a neighborhood of a point with $\mathbb Z$-component of order $|G_N|$ converges in distribution to the law of the vacant set of a random interlacement on a certain limit model describing the structure of the graph in the neighborhood of the point. The level of the random interlacement depends on the local time of a Brownian motion. The result also describes the limit behavior of the joint distribution of the local pictures in the neighborhood of several distant points with possibly different limit models. As examples of $G_N$, we treat the $d$-dimensional box of side length $N$, the Sierpinski graph of depth $N$ and the $d$-ary tree of depth $N$, where $d \geq 2$.
\end{abstract}

\section{Introduction}

In recent works, Sznitman introduces the model of random interlacements on ${\mathbb Z}^{d+1}$, $d \geq 2$ (cf.~\cite{S07}, \cite{SS08}), and in \cite{S08} explores its relation with the microscopic structure left by simple random walk on an infinite discrete cylinder $({\mathbb Z}/N{\mathbb Z})^{d} \times {\mathbb Z}$ by times of order $N^{2d}$. The present work extends this relation to random walk on $G_N \times {\mathbb Z}$ running for a time of order $|G_N|^2$, where the bases $G_N$ are given by finite weighted graphs satisfying suitable assumptions, as proposed by Sznitman in \cite{S08}. The limit models that appear in this relation are random interlacements on transient weighted graphs describing the structure of $G_N$ in a microscopic neighborhood. Random interlacements on such graphs have been constructed in \cite{T08}. Among the examples of $G_N$ to which our result applies are boxes of side-length $N$, discrete Sierpinski graphs of depth $N$ and $d$-ary trees of depth $N$.

\vspace{2mm}

We proceed with a more precise description of the setup. A weighted graph $({\mathcal G}, {\mathcal E}, w_{.,.})$ consists of a countable set ${\mathcal G}$ of vertices, a set $\mathcal E$ of unordered pairs of distinct vertices, called edges, and a weight $w_{.,.}$, which is a symmetric function associating to every ordered pair $({\mathsf y}, {\mathsf y}')$ of vertices a non-negative number $w_{{\mathsf y},{\mathsf y}'}= w_{{\mathsf y}',{\mathsf y}}$, non-zero if and only if $\{{\mathsf y}, {\mathsf y}' \} \in {\mathcal E}$. Whenever $\{{\mathsf y}, {\mathsf y}' \} \in {\mathcal E}$, the vertices $\mathsf y$ and ${\mathsf y}'$ are called neighbors. A path of length $n$ in $\mathcal G$ is a sequence of vertices $({\mathsf y}_0, \ldots, {\mathsf y}_n)$ such that ${\mathsf y}_{i-1}$ and ${\mathsf y}_i$ are neighbors for $1 \leq i \leq n$. The distance $d({\mathsf y},{\mathsf y}')$ between vertices $\mathsf y$ and ${\mathsf y}'$ is defined as the length of the shortest path starting at $\mathsf y$ and ending at ${\mathsf y}'$ and $B({\mathsf y},r)$ denotes the closed ball centered at $\mathsf y$ of radius $r \geq 0$. We generally omit $\mathcal E$ and $w_{.,.}$ from the notation and simply refer to $\mathcal G$ as a weighted graph. A standing assumption is that $\mathcal G$ is connected. The random walk on $\mathcal G$ is defined as the irreducible reversible Markov chain on $\mathcal G$ with transition probabilities $p^{\mathcal G}({\mathsf y},{\mathsf y}') = w_{{\mathsf y},{\mathsf y}'}/w_{\mathsf y}$ for $\mathsf y$ and ${\mathsf y}'$ in $\mathcal G$, where $w_{\mathsf y} = \sum_{{\mathsf y}' \in {\mathcal G}} w_{{\mathsf y},{\mathsf y}'}$. Then $w_{{\mathsf y}} p^{\mathcal G}({\mathsf y},{\mathsf y}') = w_{{\mathsf y}'} p^{\mathcal G}({\mathsf y}',{\mathsf y})$, so a reversible measure for the random walk is given by $w({\mathcal A})= \sum_{{\mathsf y} \in {\mathcal A}} w_{\mathsf y}$ for ${\mathcal A} \subseteq {\mathcal G}$. A bijection $\phi$ between subsets $\mathcal B$ and ${\mathcal B}^*$ of weighted graphs $\mathcal G$ and ${\mathcal G}^*$ is called an isomorphism between ${\mathcal B}$ and ${\mathcal B}^*$ if $\phi$ preserves the weights, i.e.~if $w_{\phi({\mathsf y}), \phi({\mathsf y}')} =  w_{{\mathsf y},{\mathsf y}'}$ for all $\mathsf y$, ${\mathsf y}' \in \mathcal B$.

\vspace{2mm}

This setup allows the definition of a random walk $(X_n)_{n \geq 0}$ on the discrete cylinder
\begin{align}
G_N \times {\mathbb Z}, \label{def:cyl}
\end{align}
where $G_N$, $N \geq 1$, is a sequence of finite connected weighted graphs with weights $(w_{y,y'})_{y,y' \in G_N}$ and $G_N \times {\mathbb Z}$ is equipped with the weights
\begin{align}
w_{x,x'} = w_{y,y'} \mathbf{1}_{\{z=z'\}} + \frac{1}{2} \mathbf{1}_{\{y=y', |z-z'|=1\}}, \textrm{ for } x=(y,z) \textrm{, } x'=(y',z') \textrm{ in } G_N \times {\mathbb Z}. \label{def:cylweights}
\end{align}
We will mainly consider situations where all edges of the graphs have equal weight $1/2$.
The random walk $X$ starts from $x \in G_N \times {\mathbb Z}$ or from the uniform distribution on $G_N \times \{0\}$ under suitable probabilities $P_x$ and $P$ defined in (\ref{def:pr1}) and (\ref{def:pr2}) below. 
We consider $M \geq 1$ and sequences of points $x_{m,N}=(y_{m,N},z_{m,N})$, $1 \leq m \leq M$, in $G_N \times {\mathbb Z}$ with mutual distance tending to infinity. We assume that the neighborhoods around any vertex $y_{m,N}$ look like balls in a fixed infinite graph ${\mathbb G}_m$, in the sense that 
\begin{align}
&\textrm{we choose an } {\mathsf r}_N \to \infty, \textrm{ such that there are isomorphisms } \phi_{m,N} \textrm{ from }  \label{iso0}\\
&B(y_{m,N},{\mathsf r}_N) \textrm{ to } B(o_m,{\mathsf r}_N) \subset {\mathbb G}_m \textrm{ with } \phi_{m,N}(y_{m,N}) = o_m \textrm{ for all $N$.} \nonumber
\end{align}
The points not visited by the random walk in the neighborhood of $x_{m,N}$ until time $t \geq 0$ induce a random configuration of points in the limit model ${\mathbb G}_m \times {\mathbb Z}$, called the vacant configuration in the neighborhood of $x_{m,N}$, which is defined as the $\{0,1\}^{{\mathbb G}_m \times {\mathbb Z}}$-valued random variable
\begin{align}
\omega^{m,N}_t ({\mathtt x}) = \left\{ \begin{array}{cl} \mathbf{1}\{X_n \neq \Phi_{m,N}^{-1}({\mathtt x}), \textrm{ for } 0 \leq n \leq t\}, & \textrm{if } {\mathtt x} \in B(o_m,{\mathsf r}_N) \times {\mathbb Z}, \\
0, & \textrm{otherwise, for $t \geq 0$,}
                         \end{array} \right. \label{def:pic}
\end{align}
where the isomorphism $\Phi_{m,N}$ is defined by $\Phi_{m,N}(y,z) = (\phi_{m,N}(y),z-z_{m,N})$ for $(y,z)$ in $B(y_{m,N},{\mathsf r}_N) \times {\mathbb Z}$. 

\vspace{2mm}

Random interlacements on ${\mathbb G}_m \times {\mathbb Z}$ enter the asymptotic behavior of the distribution of the local pictures $\omega^{m,N}$. For the construction of random interlacements on transient weighted graphs we refer to \cite{T08}. For our purpose it suffices to know that for a weighted graph ${\mathbb G}_m \times {\mathbb Z}$ with weights defined such that the random walk on it is transient, the law ${\mathbb Q}^{{\mathbb G}_m \times {\mathbb Z}}_u$ on $\{0,1\}^{{\mathbb G}_m \times {\mathbb Z}}$ of the indicator function of the vacant set of the random interlacement at level $u \geq 0$ on ${\mathbb G}_m \times {\mathbb Z}$ is characterized by, cf.~equation (1.1) of \cite{T08},
\begin{align}
&{\mathbb Q}^{{\mathbb G}_m \times {\mathbb Z}}_u [\omega({\mathtt x}) = 1, \textrm{ for all } {\mathtt x} \in {\mathbb V}] = \exp \{-u \textup{ cap}^m({\mathbb V})\}, \label{def:int}\\
&\textrm{for all finite subsets } {\mathbb V} \textrm{ of } {\mathbb G}_m \times {\mathbb Z},  \nonumber
\end{align}
where  $\omega({\mathtt x})$, ${\mathtt x} \in {\mathbb G}_m \times {\mathbb Z}$, are the canonical coordinates on $\{0,1\}^{{\mathbb G}_m \times {\mathbb Z}}$, and $\textup{cap}^m({\mathbb V})$ the capacity of $\mathbb V$ as defined in (\ref{def:cap}) below. 

\vspace{2mm}

The main result of the present work requires the assumptions \ref{hyp:w}-\ref{hyp:k} on the graph $G_N$, which we discuss below. In order to state the result, we have yet to introduce the local time of the $\mathbb Z$-projection $\pi_{\mathbb Z}(X)$ of $X$, defined as
\begin{align}
L^z_n = \sum_{l=0}^{n-1} \mathbf{1}_{\{\pi_{\mathbb Z}(X_l) = z\}}, \textrm{ for } z \in {\mathbb Z}, n \geq 1, \label{def:locpr}
\end{align}
as well as the canonical Wiener measure $W$ and a jointly continuous version $L(v,t)$, $v \in {\mathbb R}$, $t \geq 0$, of the local time of the canonical Brownian motion. The main result asserts that under suitable hypotheses the joint distribution of the vacant configurations in the neighborhoods of $x_{1,N}, \ldots, x_{M,N}$ and the scaled local times of the $\mathbb Z$-projections of these points at a time of order $|G_N|^2$ converges as $N$ tends to infinity to the joint distribution of the vacant sets of random interlacements on ${\mathbb G}_m \times {\mathbb Z}$ and local times of a Brownian motion. The levels of the random interlacements depend on the local times, and conditionally on the local times, the random interlacements are independent. Here is the precise statement:

\begin{theorem}\label{thm:d}
Assume \textup{\ref{hyp:w}-\ref{hyp:k}} (see below (\ref{def:spec})), as well as
\begin{align}
\frac{w(G_N)}{|G_N|} \stackrel{N \to \infty}{\longrightarrow} \beta, \textrm{ for some } \beta >0,  \label{hyp:d}
\end{align}
and for all $1 \leq m \leq M$,
$$\frac{z_{m,N}}{|G_N|} \stackrel{N \to \infty}{\longrightarrow} v_m, \textrm{ for some } v_m \in {\mathbb R},$$
which is in fact assumption \textup{\ref{hyp:zconv}}, see below.
Then the graphs ${\mathbb G}_m \times {\mathbb Z}$ are transient and as $N$ tends to infinity, the $\prod_{m=1}^M \{0,1\}^{{\mathbb G}_m} \times {\mathbb R}_+^M$-valued random variables
\begin{align*}
\Bigl(\omega^{1,N}_{\alpha |G_N|^2}, \ldots, \omega^{M,N}_{\alpha |G_N|^2}, \frac{L^{z_{1,N}}_{\alpha |G_N|^2}}{|G_N|}, \ldots, \frac{L^{z_{M,N}}_{\alpha |G_N|^2}}{|G_N|} \Bigr),  \quad \alpha >0, \, N \geq 1,
\end{align*}
defined by (\ref{def:pic}) and (\ref{def:locpr}), with ${\mathsf r}_N$ and $\phi_{m,N}$ chosen in (\ref{eq:rs}) and (\ref{def:iso}), converge in joint distribution under $P$ to the law of the random vector
$(\omega_1, \ldots, \omega_M, U_1, \ldots, U_M)$
with the following distribution: the variables $(U_m)_{m=1}^M$ are distributed as $((1+\beta) L(v_m,\alpha/(1+\beta)))_{m=1}^M$ under $W$, and conditionally on $(U_m)_{m=1}^M$, the variables $(\omega_m)_{m=1}^M$ have joint distribution $ \prod_{1 \leq m \leq M} {\mathbb Q}_{U_m/(1+\beta)}^{{\mathbb G}_m \times {\mathbb Z}}$.
\end{theorem}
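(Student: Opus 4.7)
The plan is to treat separately the macroscopic motion of $\pi_{\mathbb Z}(X)$, whose local times at the levels $z_{m,N}$ will drive the limiting interlacement levels, and the microscopic traces in the neighborhood of each $x_{m,N}$, produced by the many excursions that $X$ performs through a mesoscopic slab around that point. The two will be glued together via the characterization (\ref{def:int}): to identify a local configuration with the vacant set of an interlacement at level $U_m/(1+\beta)$, it suffices to prove, for every finite ${\mathbb V} \subset {\mathbb G}_m \times {\mathbb Z}$ and every $m$, that the probability $\mathbb V$ is fully vacant converges to $\exp\{-U_m\,\textup{cap}^m({\mathbb V})/(1+\beta)\}$, jointly over $m$ and in the conditionally independent sense described in the statement. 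Transience of ${\mathbb G}_m \times {\mathbb Z}$ should follow at once from the volume/heat-kernel content of \ref{hyp:w}-\ref{hyp:k}, which is transferred from $G_N$ to ${\mathbb G}_m$ through (\ref{iso0}).

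I would first handle the ${\mathbb Z}$-projection. At $(y,z)$ the walk makes a ${\mathbb Z}$-step with probability $1/(w_y+1)$ and otherwise stays on level $z$; because $w(G_N)/|G_N| \to \beta$ together with the spectral gap / mixing content of \ref{hyp:w}-\ref{hyp:k}, the number of ${\mathbb Z}$-moves performed up to time $n$ is $n/(1+\beta) + o(n)$ in probability. Donsker's invariance principle applied to the subsampled symmetric walk on ${\mathbb Z}$, combined with classical joint convergence of simple random walk local times to Brownian local times, then yields
$$|G_N|^{-1}\pi_{\mathbb Z}(X_{\lfloor s|G_N|^2 \rfloor}) \Rightarrow B(s/(1+\beta)), \qquad |G_N|^{-1} L^{z_{m,N}}_{\alpha|G_N|^2} \Rightarrow (1+\beta)\,L(v_m, \alpha/(1+\beta)),$$
jointly in $m$, the factor $(1+\beta)$ accounting for the average sojourn at each level.

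For the vacant configurations I would introduce three scales $\mathsf r_N \ll h_N \ll H_N \ll |G_N|$ and decompose the trajectory into successive excursions of $X$ from $B(y_{m,N},h_N) \times [z_{m,N}-h_N,z_{m,N}+h_N]$ to its complement in the slab $G_N \times [z_{m,N}-H_N, z_{m,N}+H_N]$. The number $N_{m,N}$ of such excursions completed before time $\alpha|G_N|^2$ is, up to a negligible error, a deterministic function of $L^{z_{m,N}}_{\alpha|G_N|^2}/|G_N|$. Between two consecutive excursions the walk has time to mix on $G_N$ (spectral gap from \ref{hyp:w}-\ref{hyp:k}) and to decorrelate its ${\mathbb Z}$-displacement on the scale $H_N$, so entrance points become asymptotically i.i.d.\ from a quasi-stationary-like measure. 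Using the isomorphism $\Phi_{m,N}$ of (\ref{iso0}) inside the ${\mathsf r}_N$-ball, and a Green function expansion on the larger scale, the single-excursion hitting probability $q_N(\mathbb V)$ of $\Phi_{m,N}^{-1}(\mathbb V)$ should be $\textup{cap}^m(\mathbb V)/\bigl(|G_N|(1+\beta)\bigr)$ up to a relative error, and a rare-event/Poisson approximation then produces
$$P\bigl[\omega^{m,N}_{\alpha|G_N|^2}({\mathtt x})=1,\ \forall {\mathtt x} \in {\mathbb V}\bigr] \approx \bigl(1-q_N({\mathbb V})\bigr)^{N_{m,N}} \longrightarrow \exp\{- U_m\,\textup{cap}^m({\mathbb V})/(1+\beta)\}.$$
Since $d(x_{m,N},x_{m',N}) \to \infty$, the excursion systems attached to distinct $m$'s interact only through the common local times but are otherwise asymptotically independent, yielding the product of exponents in the joint limit and the claimed conditional independence given $(U_m)$.

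The main obstacle is the third step: one must simultaneously control (a) the counting identity relating $N_{m,N}$ to $L^{z_{m,N}}_{\alpha|G_N|^2}$, (b) the identification of $q_N(\mathbb V)$ with the limit capacity $\textup{cap}^m({\mathbb V})$, and (c) the near-independence of successive excursions. Each step requires a careful choice of the three scales $\mathsf r_N, h_N, H_N$ and a substantive use of the spectral gap, heat kernel, and isoperimetric content encoded in \ref{hyp:w}-\ref{hyp:k}, together with the passage from the finite model $G_N \times {\mathbb Z}$ to the infinite limit ${\mathbb G}_m \times {\mathbb Z}$ afforded by (\ref{iso0}), and with the verification that cross-hitting of ${\mathbb V}$ by excursions centered at a different $x_{m',N}$ is subleading. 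This is where the hypotheses enter most forcefully, most likely via delicate coupling arguments and heat-kernel or Green function estimates.
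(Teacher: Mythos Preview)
Your high-level strategy---excursion decomposition, mixing of the $G_N$-component between excursions via the spectral gap, identification of the single-excursion hitting probability with a capacity, Poisson approximation, and local-time convergence for the $\mathbb Z$-component---is precisely the skeleton of the paper's proof. The paper also uses a two-scale slab structure (the intervals $I \subset \tilde I$ of widths $d_N \ll h_N$ in the partially inhomogeneous grid of Section~\ref{sec:grid}), obtains $P_{z,z'}[{\mathsf H}_{V_I} < {\mathsf T}_{\tilde B}] \sim \frac{h_N}{|G_N|}\sum_{m \in {\mathcal J}_I}\textup{cap}^m(\mathbb V_m)$ in Lemmas~\ref{lem:ch}--\ref{lem:cap2}, and relates the number of excursions to the local time via Proposition~\ref{pr:sp}.

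There is, however, a genuine gap in your outline. You treat the $\mathbb Z$-projection $\pi_{\mathbb Z}(X)$ as if your time-change argument (``the number of $\mathbb Z$-moves up to time $n$ is $n/(1+\beta)+o(n)$'') reduces everything to a simple random walk on $\mathbb Z$, after which Donsker and local-time invariance apply. The difficulty is that $\pi_{\mathbb Z}(X)$ is \emph{not} a Markov process: the holding time at each level depends on the current $G_N$-position $Y$, so the excursion-counting identities (your item~(a)) and the near-independence of excursions (your item~(c)) cannot be read off directly from the $\mathbb Z$-projection alone. The paper resolves this by first passing to a continuous-time process $\mathsf X = (\mathsf Y, \mathsf Z)$ with the same discrete skeleton $X$, in which $\mathsf Y$ and $\mathsf Z$ are \emph{independent}; this decoupling makes the excursion analysis clean and yields the continuous-time Theorem~\ref{thm:c}. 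Only afterwards does the paper transfer back to discrete time, and this requires the nontrivial Section~\ref{sec:jump}: Lemma~\ref{lem:erg} is your time-change statement, but Lemma~\ref{lem:loct} (relating $L^z_{\eta^{\mathsf X}_T}$ to $(1+\beta)\hat L^z_{\eta^{\mathsf Z}_T}$) is a separate ergodic-type estimate whose proof uses covariance bounds on increments of the inhomogeneous jump process $\eta^{\mathsf Y}$ via the spectral gap. Without some device of this kind, your passage from ``$n/(1+\beta)+o(n)$ $\mathbb Z$-moves'' to the joint law of local times and vacant configurations is not justified.

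A secondary point: your excursions are from $B(y_{m,N},h_N)\times[\,\cdot\,]$, but for many $G_N$ (e.g.\ the box or the Sierpinski graph) the mesoscopic scale $h_N$ needed for mixing already exceeds the diameter of $G_N$, so the $G_N$-ball is all of $G_N$ and one is forced to work with full slabs $G_N \times I$ as in the paper. The capacity identification (your item~(b)) then requires additional structure---the auxiliary graphs $\hat{\mathbb G}_m$ and assumptions \ref{hyp:inc}--\ref{hyp:k}---to control the return probability from $\partial B(x_{m,N},r_N)$ to the vicinity of $x_{m,N}$ before leaving $G_N\times\tilde I$; this is the content of Lemma~\ref{lem:k} and the proof of Lemma~\ref{lem:cap2}, and it is where the hypotheses enter most forcefully.
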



\begin{remark}
\textup{
Sznitman proves a result analogous to Theorem~\ref{thm:d} in \cite{S08}, Theorem~0.1, for $G_N$ given by $({\mathbb Z}/N{\mathbb Z})^d$ and ${\mathbb G}_m = {\mathbb Z}^d$ for $1 \leq m \leq M$. This result is covered by Theorem~\ref{thm:d} by choosing, for any $y$ and $y'$ in $({\mathbb Z}/N{\mathbb Z})^d$, $w_{y,y'} = 1/2$ if $y$ and $y'$ are at Euclidean distance $1$ and $w_{y,y'}=0$ otherwise. Then the random walk $X$ on $({\mathbb Z}/N{\mathbb Z})^d \times {\mathbb Z}$ with weights as in (\ref{def:cylweights}) is precisely the simple random walk considered in \cite{S08}. We then have $\beta=d$ in (\ref{hyp:d}) and recover the result of \cite{S08}, noting that the factor $1/(1+d)$ appearing in the law of the vacant set cancels with the factor $w_{\mathtt x} = d+1$ in our definition of the capacity (cf.~(\ref{def:cap})), different from the one used in \cite{S08} (cf.~(1.7) in \cite{S08}).
}
\end{remark}

We now make some comments on the proof of Theorem~\ref{thm:d}. In order to extract the relevant information from the behavior of the $\mathbb Z$-component of the random walk, we follow the strategy in \cite{S08} and use a suitable version of the partially inhomogeneous grids on $\mathbb Z$ introduced there. Results from \cite{S08} show that the total time elapsed and the scaled local time of a simple random walk on $\mathbb Z$ can be approximated by the random walk restricted to certain stopping times related to these grids. The difficulty that arises in the application of these results in our setup is that unlike in \cite{S08}, the $\mathbb Z$-projection of our random walk $X$ is not a Markov process. Indeed, the $\mathbb Z$-projection is delayed at each step for an amount of time that depends on the current position of the $G_N$-component. In order to overcome this difficulty, we decouple the $\mathbb Z$-component of the random walk from the $G_N$-component by introducing a continuous-time process ${\mathsf X}=({\mathsf Y}, {\mathsf Z})$, such that the $G_N$- and $\mathbb Z$-components $\mathsf Y$ and $\mathsf Z$ are independent and such that the discrete skeleton of $\mathsf X$ is the random walk $X$ on $G_N \times {\mathbb Z}$. It is not trivial to regain information about the random walk $X$ after having switched to continuous time, because the waiting times of the process $\mathsf X$ depend on the steps of the discrete skeleton $X$ and are in particular not iid. We therefore prove in Theorem~\ref{thm:c} the continuous-time version of Theorem~\ref{thm:d} first, essentially by using an abstraction of the arguments in \cite{S08} and making frequent use of the independence of the $G_N$- and $\mathbb Z$-components of $\mathsf X$, and defer the task of transferring the result to discrete time to later.

\vspace{2mm}

Let us make a few more comments on the partially inhomogeneous grids just mentioned. Every point of these grids is a center of two concentric intervals $I \subset {\tilde I}$ with diameters of order $d_N$ and $h_N \gg d_N$, where $h_N$ is also the order of the mesh size of the grids throughout $\mathbb Z$. The definition of the grids ensures that all points $z_{m,N}$ are covered by the smaller intervals, hence the partial inhomogeneity. We then consider the successive returns to the intervals $I$ and departures from $\tilde I$ of the discrete skeleton $Z$ of $\mathsf Z$. According to a result from \cite{S08} (see Proposition~\ref{pr:sp} below) and Lemma~\ref{lem:loc}, these excursions contain all the relevant information needed to approximate the total time elapsed and to relate the scaled local time ${\mathsf L}^{z_{m,N}}_{\alpha |G_N|^2} / |G_N|$ of $\mathsf Z$ (see~(\ref{def:loct})) to the number of returns of $Z$ to the box containing $z_{m,N}$. For these estimates to apply, the mesh size $h_N$ of the grids has to be smaller than the square root of the total number of steps of the walk, i.e. less than $|G_N|$. At the same time, we shall need $h_N$ to be larger than the square root of the relaxation time $\lambda_N^{-1}$ of $G_N$, so that the $G_N$-component $\mathsf Y$ approaches its stationary, i.e.~uniform, distribution between different excursions. This motivates the condition \ref{hyp:mix}, see below (\ref{def:spec}), on the spectral gap $\lambda_N$ of $G_N$.

\vspace{2mm}

Once the partially inhomogeneous grids are introduced, the law ${\mathbb Q}^{{\mathbb G}_m \times {\mathbb Z}}_.$ of the vacant set appears as follows: For concentric intervals $I \subset {\tilde I}$, $z \in \partial (I^c)$ and $z' \in \partial {\tilde I}$ we define the probability $P_{z,z'}$ as the law of the finite-time random walk trajectory started at a uniformly distributed point in $G_N \times \{z\}$ and conditioned to exit $G_N \times {\tilde I}$ through $G_N \times \{z'\}$ at its final step. We have mentioned that the distribution of the $G_N$-component of $\mathsf X$ approaches the uniform distribution between different excursions from $G_N \times I$ to $(G_N \times {\tilde I})^c$. It follows that the law of these successive excursions of $\mathsf X$ under $P$, conditioned on the points $z$ and $z'$ of entrance and departure of the $\mathbb Z$-component, can be approximated by a product of the laws $P_{z,z'}$. This is shown in Lemma~\ref{lem:tv}. A crucial element in the proof of the continuous-time Theorem~\ref{thm:c} is the investigation of the $P_{z,z'}$-probability that a set $V$ in the neighborhood of a point $x_{m,N}$ in $G_N \times I$ is not left vacant by one excursion. We find that up to a factor tending to $1$ as $N$ tends to infinity, this probability is equal to $\textup{cap}^m(\Phi_{m,N}(V)) h_N/|G_N|$. With the relation between the number of such excursions taking place up to time $\alpha |G_N|^2$ and the scaled local time ${\mathsf L}^{z_{m,N}}_{\alpha |G_N|^2} / |G_N|$ from Proposition~\ref{pr:sp} and Lemma~\ref{lem:loc}, the law ${\mathbb Q}^{{\mathbb G}_m \times {\mathbb Z}}_.$, see (\ref{def:int}), appears as the limiting distribution of the vacant configuration in the neighborhood of $x_{m,N}$.

\vspace{2mm}

Let us describe the derivation of the asymptotic behavior of the $P_{z,z'}$-probability just mentioned in a little more detail. As in \cite{S08}, a key step in the proof is to show that the probability that the random walk escapes from a vertex in a set $V \subset G_N \times I$ in the vicinity of $x_{m,N}$ to the complement of $G_N \times {\tilde I}$ before hitting the set $V$ converges to the corresponding escape probability to infinity for the set $\Phi_{m,N}(V)$ in the limit model ${\mathbb G}_m \times {\mathbb Z}$. This is where the required capacity appears. The assumption \ref{hyp:iso1} that (potentially small) neighborhoods $B(y_{m,N},r_N)$ of the points $y_{m,N}$ are isomorphic to neighborhoods in ${\mathbb G}_m$ is necessary but not sufficient for this purpose. We still need to ensure that the probability that the random walk returns from the boundary of $B(x_{m,N},r_N)$ to the vicinity of $x_{m,N}$ before exiting $G_N \times {\tilde I}$ decays. This is the reason why we assume the existence of larger neighborhoods $C_{m,N}$ containing $B(y_{m,N},r_N)$ in \ref{hyp:inc}. 
These neighborhoods $C_{m,N}$ are assumed to be either identical or disjoint for points with similarly-behaved $\mathbb Z$-components in \ref{hyp:disj}. Crucially, we assume in \ref{hyp:iso2} that the sets $C_{m,N}$ are themselves isomorphic to neighborhoods in infinite graphs ${\hat {\mathbb G}}_m$ that are sufficiently close to being transient, as is formalized by \ref{hyp:trans}. We additionally assume in \ref{hyp:k} that $X$ started from any point in the boundary of $C_{m,N} \times {\mathbb Z}$ typically does not reach the vicinity of $x_{m,N}$ until time $\lambda_N^{-1} |G_N|^\epsilon$, i.e.~until well after the relaxation time of $Y$. 
These assumptions ensure that the random walk, when started from the boundary of $B(x_{m,N},r_N)$, is unlikely to return to a point close to $x_{m,N}$ before exiting $G_N \times {\tilde I}$. 
For this last argument, we need the mesh size $h_N$ of the grids to be smaller than $(\lambda_N^{-1} |G_N|^\epsilon)^{1/2}$, so that $h_N$ can be only slightly larger than the $\lambda_N^{-1/2}$ required for the homogenization of the $G_N$-component.

\vspace{2mm}

In order to deduce Theorem~\ref{thm:d} from the continuous-time result, we need an estimate on the long term-behavior of the process of jump times of $\mathsf X$ and a comparison of the local time of $\mathsf X$ and the local time of the discrete skeleton $X$. This requires a kind of ergodic theorem, with the feature that both time and the process itself depend on $N$. To show the required estimates, we use estimates on the covariance between sufficiently distant increments of the jump process that follow from bounds on the spectral gap of $G_N$. With the assumption (\ref{hyp:d}), we find that the total number of jumps made by $\mathsf X$ up to a time of order $|G_N|^2$ is essentially proportional to the limit of the average weight $(1+\beta)$ per vertex in $G_N \times {\mathbb Z}$, see Lemma~\ref{lem:erg}. In this context, the hypothesis \ref{hyp:w} of uniform boundedness of the vertex-weights of $G_N$ plays an important role for stochastic domination of jump processes by homogeneous Poisson processes.

\vspace{2mm}

The article is organized as follows:
In Section \ref{sec:not}, we introduce notation and state the hypotheses \ref{hyp:w}-\ref{hyp:k} for Theorem~\ref{thm:d}.
In Section \ref{sec:grid}, we introduce the partially inhomogeneous grids with the relevant results described above.
Section \ref{sec:ind} shows that the dependence between the $G_N$-components of different excursions related to these grids is negligible. With these ingredients at hand, we can prove the continuous-time version of Theorem~\ref{thm:d} in Section \ref{sec:c}.
The crucial estimates on the jump process needed to transfer the result to discrete time are derived in Section \ref{sec:jump}. With the help of these estimates, we finally deduce Theorem~\ref{thm:d} in Section \ref{sec:proof}. Section \ref{sec:ex} is devoted to applications of Theorem~\ref{thm:d} to three concrete examples of $G_N$.

\vspace{2mm}

Throughout this article, $c$ and $c'$ denote positive constants changing from place to place. Numbered constants $c_0, c_1, \ldots$ are fixed and refer to their first appearance in the text. Dependence of constants on parameters appears in the notation.

\paragraph{Acknowledgments.} The author is grateful to Alain-Sol Sznitman for proposing the problem and for helpful advice.

\section{Notation and hypotheses} \label{sec:not}

The purpose of this section is to introduce some useful notation and state the hypotheses \ref{hyp:w}-\ref{hyp:k} made in Theorem~\ref{thm:d}.

\vspace{2mm}

Given any sequence $a_N$ of real numbers, $o(a_N)$ denotes a sequence $b_N$ with the property $b_N/a_N \to 0$ as $N \to \infty$. The notation $a \wedge b$ and $a \vee b$ is used to denote the respective minimum and maximum of the numbers $a$ and $b$. For any set $A$, we denote by $|A|$ the number of its elements. For a set ${\mathcal B}$ of vertices in a graph $\mathcal G$, we denote by $\partial {\mathcal B}$ the boundary of $\mathcal B$, defined as the set of vertices in the complement of $\mathcal B$ with at least one neighbor in $\mathcal B$ and define the closure of $\mathcal B$ as ${\bar {\mathcal B}} = {\mathcal B} \cup \partial {\mathcal B}$.

\vspace{2mm}

We now construct the relevant probabilities for our study. For any weighted graph $\mathcal G$, the path space ${\mathcal P}({\mathcal G})$ is defined as the set of right-continuous functions from $[0,\infty)$ to $\mathcal G$ with infinitely many discontinuities and finitely many discontinuities on compact intervals, endowed with the canonical $\sigma$-algebra generated by the coordinate projections. We let $({\mathsf Y}_t)_{t \geq 0}$ stand for the canonical coordinate process on ${\mathcal P}({\mathcal G})$. We consider the probability measures $P^{\mathcal G}_{\mathsf y}$ on ${\mathcal P}({\mathcal G})$ such that $\mathsf Y$ is distributed as a continuous-time Markov chain on $\mathcal G$ starting from ${\mathsf y} \in {\mathcal G}$ with transition rates given by the weights $w_{{\mathsf y}, {\mathsf y}'}$. Then the discrete skeleton $(Y_n)_{n \geq 0}$, defined by $Y_n = {\mathsf Y}_{\sigma^{\mathsf Y}_n}$, with $(\sigma^{\mathsf Y}_n)_{n \geq 0}$ the successive times of discontinuity of $\mathsf Y$ (where $\sigma^{\mathsf Y}_0 = 0$), is a random walk on $\mathcal G$ starting from $\mathsf y$ with transition probabilities $p^{\mathcal G}({\mathsf y},{\mathsf y}') = w_{{\mathsf y},{\mathsf y}'}/w_{\mathsf y}$. The discrete- and continuous-time transition probabilities for general times $n$ and $t$ are denoted by $p^{\mathcal G}_n({\mathsf y},{\mathsf y}') = P^{\mathcal G}_{\mathsf y} [Y_n = {\mathsf y}']$ and  $q^{\mathcal G}_t({\mathsf y},{\mathsf y}') = P^{\mathcal G}_{\mathsf y} [{\mathsf Y}_t = {\mathsf y}'].$ The jump process $(\eta^{\mathsf Y}_t)_{t \geq 0}$ of $\mathsf Y$ is denoted by $\eta^{\mathsf Y}_t = \sup \{ n \geq 0: \sigma^{\mathsf Y}_n \leq t\}$, so that ${\mathsf Y}_t = Y_{\eta^{\mathsf Y}_t}$, $t \geq 0$. 

\vspace{2mm}

Next, we adapt the notation of the last paragraph to the graphs we consider. Let ${\mathcal G}$ be any of the graphs $\mathbb Z = \{z,z', \ldots\}$ with weight $1/2$ attached to any edge, $G_N = \{y,y',\ldots\}$, ${\mathbb G}_m = \{{\mathtt y}, {\mathtt y}', \ldots \}$ or ${\hat {\mathbb G}}_m = \{{\mathtt y}, {\mathtt y}', \ldots \}$, where $G_N$ are the finite bases of the cylinder in (\ref{def:cyl}), and for $1 \leq m \leq M$, ${\mathbb G}_m$ are the infinite graphs in (\ref{iso0}) and ${\hat {\mathbb G}}_m$ are infinite connected weighted graphs. Unlike ${\mathbb G}_m$, the graphs ${\hat {\mathbb G}}_m$ do not feature in the statement of Theorem~\ref{thm:d}. They do, however, play a crucial role in its proof. Indeed, we will assume that neighborhoods of the points $y_{m,N}$ that are in general much larger than $B(y_{m,N}, {\mathsf r}_N)$ are isomorphic to subsets of ${\hat {\mathbb G}}_m$. For some examples such as the Euclidean box treated in Section~\ref{sec:ex}, this assumption requires that ${\hat {\mathbb G}}_m$ be different from ${\mathbb G}_m$. Assumptions on ${\hat {\mathbb G}}_m$ will then allow us to control certain escape probabilities from the boundary of $B(x_{m,N},{\mathsf r}_N)$ to the complement of $G_N \times {\tilde I}$, for an interval ${\tilde I}$ containing $z_{m,N}$. See also assumptions \ref{hyp:inc}-\ref{hyp:k} and Remark~\ref{rem:ghat} below for more on the graphs ${\hat {\mathbb G}}_m$.

\vspace{2mm}

Under the product measures $P^{\mathcal G}_{\mathsf y} \times P^{\mathbb Z}_z$ on ${\mathcal P}({\mathcal G}) \times {\mathcal P}({\mathbb Z})$, we consider the process ${\mathsf X} = ({\mathsf Y}, {\mathsf Z})$ on ${\mathcal G} \times {\mathbb Z}$. The crucial observation is that $\mathsf X$ has the same distribution as the random walk in continuous time on ${\mathcal G} \times {\mathbb Z}$ attached to the weights 
\begin{align}
&w_{({\mathsf y}, z),({\mathsf y}', z')} = w_{{\mathsf y}, {\mathsf y}'} \mathbf{1}_{\{z = z'\}} + \frac{1}{2} \mathbf{1}_{\{{\mathsf y} = {\mathsf y}', |z - z'|=1\}}, \label{def:cylw}
\end{align}
for any pair of vertices $\{({\mathsf y},z), ({\mathsf y}', z')\}$ in ${\mathcal G} \times {\mathbb Z}$. 
We define the discrete skeleton $(X_n)_{n \geq 0}$ of $\mathsf X$ by $X_n = {\mathsf X}_{\sigma^{\mathsf X}_n}$, with $(\sigma^{\mathsf X}_n)_{n \geq 0}$ the times of discontinuity of $\mathsf X$ (where $\sigma^{\mathsf X}_0 = 0$) and similarly $Z_n = {\mathsf Z}_{\sigma^{\mathsf Z}_n}$ for the times $(\sigma^{\mathsf Z}_n)_{n \geq 0}$ of discontinuity of $\mathsf Z$. We will often rely on the fact that
\begin{align}
&\textrm{$X$ is distributed as the random walk on ${\mathcal G} \times {\mathbb Z}$ with weights as in (\ref{def:cylw}).} \label{eq:Xdist}
\end{align}
The jump process of $\mathsf X$ is defined as $\eta^{\mathsf X}_t = \sup \{n \geq 0: \sigma^{\mathsf X}_n \leq t\}$. We write
\begin{align}
P_x = P^{G_N}_y \times P^{\mathbb Z}_z, \,\, {\mathbb P}^m_{\mathtt x} = P^{{\mathbb G}_m}_{\mathtt y} \times P^{{\mathbb Z}}_z \textrm{ and } {\hat {\mathbb P}}^m_{\mathtt x} = P^{{\hat {\mathbb G}}_m}_{\mathtt y} \times P^{\mathbb Z}_z, \label{def:pr1}
\end{align}
for vertices $x=(y,z)$ in $G_N \times {\mathbb Z}$ and ${\mathtt x}=({\mathtt y},z)$ in ${\mathbb G}_m \times {\mathbb Z}$ or ${\hat {\mathbb G}}_m \times {\mathbb Z}$.
Two measures on $G_N$ are of particular interest: the reversible probability $\pi_{G_N}(y) = w_y/w(G_N)$ for $p^{G_N}(.,.)$ and the uniform measure $\mu(y) = 1/|G_N|$, $y \in G_N$, which is reversible for the continuous-time transition probabilities $q^{G_N}_t(.,.)$, $t \geq 0$. We define 
\begin{align}
P^{G_N} = \sum_{y \in G_N} \mu(y) P^{G_N}_y, \, P_z = \sum_{y \in G_N} \mu(y) P_{(y,z)}, \textrm{ and } P = \sum_{y \in G_N} \mu(y) P_{(y,0)}. \label{def:pr2}
\end{align}
On any path space ${\mathcal P}({\mathcal G})$, the canonical shift operators are denoted by $(\theta_t)_{t \geq 0}$. The shift operators for the discrete-time process $X$ are denoted by $\theta^X_n = \theta_{\sigma^{\mathsf X}_n}$, $n \geq 0$.

\vspace{2mm}

For the process $X$, the entrance-, exit- and hitting times of a set $A$ are defined as
\begin{align}
&H_A = \inf \{n \geq 0: X_n \in A\}, \textrm{ } T_A = \inf \{n \geq 0: X_n \notin A\} \\ 
&\textrm{ and } {\tilde H}_A = \inf \{n \geq 1: X_n \in A\}. \nonumber
\end{align}
In the case $A= \{x\}$, we simply write $H_x$ and ${\tilde H}_x$. We also use the same notation for the corresponding times of the processes $Y$ and $Z$. The analogous times for the continuous-time processes $\mathsf X$, $\mathsf Y$ and $\mathsf Z$ are denoted ${\mathsf H}_A$ and ${\mathsf T}_A$.
Recall the definition of the local time of the $\mathbb Z$-projection of the random walk on $G \times {\mathbb Z}$ from (\ref{def:locpr}). The local times of $\mathsf Z$ and its discrete skeleton $Z$ are defined as
\begin{align}
&{\mathsf L}^z_t = \int_{0}^t \mathbf{1}_{\{{\mathsf Z}_s = z\}} ds \textrm{ and } {\hat L}^z_n = \sum_{l=0}^{n-1} \mathbf{1}_{\{Z_l=z\}}. \label{def:loct}
\end{align}
Note that ${\hat L}^z_n$ should not be confused with the local time $L^z_n$ of the $\mathbb Z$-projection of $X$, defined in (\ref{def:locpr}). The capacity of a finite subset ${\mathbb V}$ of ${\mathbb G}_m \times {\mathbb Z}$ is defined as
\begin{align}
\textup{cap}^m({\mathbb V}) = \sum_{{\mathtt x} \in {\mathbb V}} {\mathbb P}^m_{\mathtt x} [{\tilde H}_{\mathbb V} = \infty] w_{\mathtt x}. \label{def:cap}
\end{align}
For an arbitrary real-valued function $f$ on $G_N$, the Dirichlet form ${\mathcal D}_N(f,f)$ is given by
\begin{align}
{\mathcal D}_N(f,f) = \frac{1}{2} \sum_{y,y' \in G_N} (f(y)-f(y'))^2 \frac{w_{y,y'}}{|G_N|}, \label{def:dir}
\end{align}
and related to the spectral gap $\lambda_N$ of the continuous-time random walk $\mathsf Y$ on $G_N$ via
\begin{align}
\lambda_N = \min \biggl\{ \frac{{\mathcal D}_N(f,f)}{\textup{var}_\mu (f)}: f \textrm{ is not constant} \biggr\}, \textrm{ where } \textup{var}_\mu (f) = \mu \bigl( (f- \mu(f))^2 \bigr). \label{def:spec} 
\end{align}
The inverse $\lambda_N^{-1}$ of the spectral gap is known as the relaxation time of the continuous-time random walk, due to the estimate (\ref{st}).

\vspace{2mm}

We now come to the specification of the hypotheses for Theorem~\ref{thm:d}. Recall that $(G_N)_{N \geq 1}$ is a sequence of finite connected weighted graphs. We consider $M \geq 1$, sequences $x_{m,N} = (y_{m,N},z_{m,N})$, $1 \leq m \leq M$, in $G_N \times {\mathbb Z}$ and an $0 < \epsilon < 1$ such that the assumptions \ref{hyp:w}-\ref{hyp:k} below hold. The first assumption is that the weights attached to vertices of $G_N$ are uniformly bounded from above and below, i.e.
\begin{align}
&\textrm{there are constants $0 < c_0 \leq c_1$ such that } c_0 \leq w_{y} \leq c_1, \textrm{ for all } y \in G_N. \tag{A1} \label{hyp:w}
\end{align}
A frequently used consequence of this assumption is that the jump process of $\mathsf Y$ under $P^G$ can be bounded from above and from below by a Poisson process of constant parameter, see Lemma~\ref{lem:sd} below. Moreover, by taking a function $f$ vanishing everywhere except at a single vertex in (\ref{def:spec}), \ref{hyp:w} implies that $\lambda_N \leq c$. If in addition also the edge-weights $w_{y,y'}$ of $G_N$ are uniformly elliptic, it follows from Cheeger's inequality (see \cite{SC}, Lemma~3.3.7, p.~383) that the relaxation time $\lambda_N^{-1}$ is bounded from above by $c|G_N|^2$. We assume a little bit more, namely that for $\epsilon$ as above,
\begin{align}
&\lambda_N^{-1} \leq |G_N|^{2-\epsilon}, \tag{A2} \label{hyp:mix}
\end{align}
which in particular rules out nearly one-dimensional graphs $G_N$.
We further assume that the mutual distances between different sequences $x_{m,N}$ diverge,
\begin{align}
&\lim_N \min_{1 \leq m < m' \leq M} d(x_{m,N},x_{m',N})  = \infty, \tag{A3} \label{hyp:dist}
\end{align}
and that in scale $|G_N|$, the $\mathbb Z$-components of the sequences $z_{m,N}$ converge:
\begin{align}
&\lim_N \frac{z_{m,N}}{|G_N|} = v_m \in {\mathbb R}, \textrm{ for } 1 \leq m \leq M.  \tag{A4} \label{hyp:zconv}
\end{align}
The key assumption is the existence of balls of diverging size centered at the points $y_{m,N}$ that are isomorphic to balls with fixed centers $o_m$ in the infinite graphs ${\mathbb G}_m$:
\begin{align}
&\textrm{For some } r_N \to \infty, \textrm{ there are isomorphisms } \phi_{m,N} \textrm{ from } B(y_{m,N},r_N) \tag{A5} \label{hyp:iso1}\\
& \textrm{to } B(o_m,r_N) \subset {\mathbb G}_m,  \textrm{ such that } \phi_{m,N}(y_{m,N}) = o_m \textrm{ for all } N, m. \nonumber
\end{align}
In the proof of Theorem~\ref{thm:d}, we want to show the decay of the probability that the random walk $X$ under $P$ returns to the close vicinity of the center $x_{m,N}$ from the boundary of each of the balls $B(x_{m,N},r_N) \subset G_N \times {\mathbb Z}$ before exiting a large box. With this aim in mind, we make the remaining assumptions. For any $m$, $N$, we assume that there exists an associated subset $C_{m,N}$ of $G_N$ such that
\begin{align}
&B(y_{m,N},r_N) \subseteq C_{m,N}, \tag{A6} \label{hyp:inc}
\end{align}
and ${\bar C}_{m,N}$ are isomorphic to a subset of the auxiliary limit model ${\hat {\mathbb G}}_m$, i.e.
\begin{align}
&\textrm{there is an isomorphism } \psi_{m,N} \textrm{ from } {\bar C}_{m,N} \textrm{ with a set } {\bar {\mathbb C}}_m \subset {\hat {\mathbb G}}_m, \tag{A7} \label{hyp:iso2} \\ &\textrm{such that } \psi_{m,N}(\partial C_{m,N}) = \partial {\mathbb C}_{m,N}, \nonumber
\end{align}
where the last condition is to ensure that the distributional identity (\ref{eq:isod2}) below holds. Note that we are allowing the infinite graphs ${\hat {\mathbb G}}_m$ to be different from ${\mathbb G}_m$. For an explanation, we refer to Remark~\ref{rem:ghat} below (see also Remark~\ref{rem:aux}). 
We further assume that the sets $C_{m,N}$ as $m$ varies are essentially either disjoint or equal (unless the corresponding $\mathbb Z$-components $z_{m,N}$ are far apart), i.e.
\begin{align}
&\textrm{whenever } v_m = v_{m'}, \textrm{ then for all $N$ either } C_{m,N} = C_{m',N} \textrm{ or } C_{m,N} \cap C_{m',N} = \emptyset.  \tag{A8} \label{hyp:disj} 
\end{align}
Concerning the limit model $\hat {\mathbb G}_m$, we require that the measure of a constant-size ball centered at ${\hat o}_{m,N} \stackrel{\textrm{(def.)}}{=} \psi_{m,N}(y_{m,N})$ under the law $Y_n \circ P^{\hat {\mathbb G}_m}_.$ decays faster than $n^{-\frac{1}{2} - \epsilon}$,
\begin{align}
&\lim_{n \to \infty} n^{\frac{1}{2} + \epsilon} \sup_{{\mathtt y}_0 \in {\hat {\mathbb G}}_m} \sup_{{\mathtt y} \in B({\hat o}_{m,N},\rho_0), N \geq 1} p^{{\hat {\mathbb G}}_m}_n ({\mathtt y}_0,{\mathtt y}) = 0, \textrm{ for any } \rho_0>0. \tag{A9}\label{hyp:trans}
\end{align}
This assumption is only used to prove Lemma~\ref{lem:mntrans} below. Let us mention that \ref{hyp:trans} typically holds whenever the on-diagonal transition densities decay at the same rate, see Remark~\ref{rem:dec} below. Finally, we assume that the random walk on $G_N \times {\mathbb Z}$, started at the interior boundary of $C_{m,N} \times {\mathbb Z}$, is unlikely to reach the vicinity of $x_{m,N}$ until well after the relaxation time of $Y$:
\begin{align}
\lim_N \sup_{y_0 \in \partial (C_m^c), z_0 \in {\mathbb Z}} P_{(y_0,z_0)} [ H_{(\phi_{m,N}^{-1}({\mathtt y}),z_{m,N} + z)}  < \lambda_N^{-1} |G_N|^\epsilon] =0,  \tag{A10} \label{hyp:k}
\end{align}
for any  $({\mathtt y},z) \in {\mathbb G}_m \times {\mathbb Z}$ (note that $\phi_{m,N}^{-1}({\mathtt y})$ is well-defined for large $N$ by \ref{hyp:iso1}).

\begin{remark}\label{rem:ghat}
 \textup{
The infinite graphs ${\hat {\mathbb G}}_m$ in \ref{hyp:iso2} can be different from the graphs ${\mathbb G}_m$ describing the neighborhoods of the points $y_{m,N}$. The reason is that for \ref{hyp:k} to hold, the sets $C_{m,N}$ will generally have to be of much larger diameter than their subsets $B(y_m,r_N)$. Hence, ${\bar C}_m$ is not necessarily isomorphic to a subset of the same infinite graph as $B(y_m,r_N)$. This situation occurs, for example, if $G_N$ is given by a Euclidean box, see Remark~\ref{rem:aux}.
}
\end{remark}

\begin{remark}\label{rem:dec}
 \textup{
Typically, the weights attached to the vertices of ${\hat {\mathbb G}}_m$ are uniformly bounded from above and from below, as are the weights in $G_N$ (see~(\ref{hyp:w})). In this case, assumption~\ref{hyp:trans} holds in particular whenever one has the on-diagonal decay $$\lim_n n^{\frac{1}{2} + \epsilon} \sup_{{\mathsf y} \in {\hat {\mathbb G}}_m} p^{{\hat {\mathbb G}}_m}_n ({\mathtt y},{\mathtt y}) \to 0,$$ see \cite{T06}, Lemma~8.8, p.~108, 109.
}
\end{remark}

\vspace{2mm}

From now on, we often drop the $N$ from the notation in $G_N$, $C_{m,N}$, $x_{m,N}$, $\phi_{m,N}$ and $\psi_{m,N}$. We extend the isomorphisms $\phi_m$ and $\psi_m$ in \ref{hyp:iso1} and \ref{hyp:iso2} to isomorphisms $\Phi_m$ and $\Psi^{z_0}_m$ defined on $B(y_m,r_N) \times {\mathbb Z}$ and on ${\bar C}_m \times {\mathbb Z}$ by
\begin{align}
&\Phi_m: (y,z) \mapsto (\phi_m(y),z-z_m), \textrm{ and } \label{def:Phi}\\
&\Psi^{z_0}_m:  (y,z)  \mapsto  (\psi_m(y),z-z_0), \textrm{ for } z_0 \in {\mathbb Z}. \label{def:Psi}
\end{align}
A crucial consequence of (\ref{hyp:iso1}) and (\ref{hyp:iso2}) is that for $r_N \geq 1$,
\begin{align}
&\textrm{$({\mathsf X}_t:0 \leq t \leq {\mathsf T}_{B(y_m,r_N-1) \times {\mathbb Z}})$ under $P_x$ has the same distribution as} \label{eq:isod1}\\
&\qquad \textrm{$(\Phi_m^{-1}({\mathsf X}_t):0 \leq t \leq {\mathsf T}_{B(o_m,r_N-1) \times {\mathbb Z}})$ under ${\mathbb P}^m_{\Phi_m(x)}$, and} \nonumber\\
&\textrm{$({\mathsf X}_t:0 \leq t \leq {\mathsf T}_{C_m \times {\mathbb Z}})$ under $P_x$ has the same distribution as} \label{eq:isod2}\\
&\qquad \textrm{$((\Psi^{z_0}_m)^{-1}({\mathsf X}_t):0 \leq t \leq {\mathsf T}_{{\mathbb C}_m \times {\mathbb Z}})$ under ${\hat {\mathbb P}}^m_{\Psi^{z_0}_m(x)}$.} \nonumber
\end{align}

The assumption \ref{hyp:trans} only enters the proof of the following lemma showing the decay of the probability that the random walk on the cylinders ${\mathbb G}_m \times {\mathbb Z}$ or ${\hat {\mathbb G}}_m \times {\mathbb Z}$ returns from distance $\rho$ to a constant-size neighborhood of $(o_m,0)$ or $(\psi_m (y_m),0)$ as $\rho$ tends to infinity. Note that this in particular implies that these cylinders are transient and the random interlacements appearing in Theorem~\ref{thm:d} make sense.
\begin{lemma} \label{lem:mntrans}
\textup{($1 \leq m \leq M$)} Assuming \textup{\ref{hyp:w}-\ref{hyp:k}}, for any $\rho_0 >0$,
\begin{align}
&\lim_{\rho \to \infty}  \sup_{\substack{d({\mathtt x},({\hat o}_m,0)) \leq \rho_0 \\ d({\mathtt x}_0,{\mathtt x}) \geq \rho}} {\hat {\mathbb P}}^m_{{\mathtt x}_0} [H_{\mathtt x} < \infty] =0, \textrm{ and } \lim_{\rho \to \infty}  \sup_{\substack{d({\mathtt x},(o_m,0)) \leq \rho_0 \\ d({\mathtt x}_0,{\mathtt x}) \geq \rho}} {\mathbb P}^{m}_{{\mathtt x}_0} [H_{\mathtt x} < \infty] =0.  \label{mntrans}
\end{align}
\end{lemma}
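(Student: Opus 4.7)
The plan is to control the Green function of the random walk on each cylinder and use the identity $\hat{\mathbb{P}}^m_{\mathtt{x}_0}[H_\mathtt{x}<\infty] = g(\mathtt{x}_0,\mathtt{x})/g(\mathtt{x},\mathtt{x}) \leq g(\mathtt{x}_0,\mathtt{x})$ (since $g(\mathtt{x},\mathtt{x})\geq 1$), where $g$ denotes the discrete Green function of $X$. Passing to the continuous-time Green function $G$ via the elementary identity $g(\mathtt{x}_0,\mathtt{x}) = w_\mathtt{x}\, G(\mathtt{x}_0,\mathtt{x})$ and noting that $w_\mathtt{x}$ is locally bounded (by \ref{hyp:w} carried through $\psi_m$ on $\bar{\mathbb{C}}_m$, which for large $N$ contains the fixed ball around $\hat{o}_{m,N}$), it suffices to show that $G(\mathtt{x}_0,\mathtt{x})\to 0$ as $d(\mathtt{x}_0,\mathtt{x})\to\infty$, uniformly for $\mathtt{x}$ in a bounded ball. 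Working in continuous time is convenient because $\mathsf{X}=(\mathsf{Y},\mathsf{Z})$ has independent components under $\hat{\mathbb{P}}^m_{\mathtt{x}_0}$, so the transition kernel factors as $q_t^{\hat{\mathbb{G}}_m\times\mathbb{Z}}(\mathtt{x}_0,\mathtt{x}) = q_t^{\hat{\mathbb{G}}_m}(\mathtt{y}_0,\mathtt{y})\, q_t^{\mathbb{Z}}(z_0,z)$.

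For the first limit in (\ref{mntrans}), a Poissonization argument in the spirit of Lemma~\ref{lem:sd} converts the discrete-time decay \ref{hyp:trans} into a continuous-time bound $q_t^{\hat{\mathbb{G}}_m}(\mathtt{y}_0,\mathtt{y}) = o(t^{-1/2-\epsilon})$, uniformly in $\mathtt{y}_0\in\hat{\mathbb{G}}_m$ and $\mathtt{y}\in B(\hat{o}_{m,N},\rho_0)$. Combined with the classical estimate $q_t^{\mathbb{Z}}(z_0,z) \leq c/\sqrt{1\vee t}$ and its Gaussian tail $c\,t^{-1/2}\exp(-c'(z-z_0)^2/t)$, the integrand of $G(\mathtt{x}_0,\mathtt{x}) = \int_0^\infty q_t^{\hat{\mathbb{G}}_m}(\mathtt{y}_0,\mathtt{y})\, q_t^{\mathbb{Z}}(z_0,z)\,dt$ is $O(t^{-1-\epsilon})$ at infinity, yielding finiteness of $G$ and therefore transience of $\hat{\mathbb{G}}_m\times\mathbb{Z}$. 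To obtain the vanishing, split the integral at a large threshold $T$: the tail $\int_T^\infty$ is $O(T^{-\epsilon})$ independently of the endpoints; for the head $\int_0^T$, either $d(\mathtt{y}_0,\mathtt{y})\geq\rho/2$, and stochastic domination of the jump process of $\mathsf{Y}$ by a homogeneous Poisson process (which is possible because the vertex weights of $\hat{\mathbb{G}}_m$ are locally bounded) shows the corresponding contribution is small, or $|z-z_0|\geq\rho/2$ and the Gaussian tail of $q_t^{\mathbb{Z}}$ makes the integrand small. Choosing $T$ first and then letting $\rho\to\infty$ delivers the first limit.

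The second limit is obtained by reducing to the first via the composite isomorphism $\psi_m\circ\phi_m^{-1}$, which by \ref{hyp:iso1}, \ref{hyp:inc} and \ref{hyp:iso2} identifies $B(o_m,r_N)\subset\mathbb{G}_m$ with a subset of $\hat{\mathbb{G}}_m$ for large $N$. Once $r_N>2\rho_0$, the distributional identities (\ref{eq:isod1})-(\ref{eq:isod2}) allow me to transport the walk in $\mathbb{G}_m\times\mathbb{Z}$ killed at $\partial(B(o_m,r_N-1)\times\mathbb{Z})$ to its $\hat{\mathbb{G}}_m$-image, and a strong Markov decomposition at the successive exits and re-entries of this large ball reduces the hit of $\mathtt{x}$ in $\mathbb{G}_m\times\mathbb{Z}$ to successive attempted hits from points at distance $\geq r_N-\rho_0\to\infty$; the first limit makes each excursion's contribution vanish, and summation via a geometric series in the (small) per-excursion return probability yields the second limit. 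The main obstacle is the uniform splitting of $G$ in the middle step, where the polynomial decay at large $t$ supplied by \ref{hyp:trans} and the short-time decay supplied by jump-rate bounds must be patched together uniformly in $\mathtt{x}_0$ and in $\mathtt{x}$ over a bounded neighborhood; by contrast, the transfer to $\mathbb{G}_m\times\mathbb{Z}$ in the last step is essentially bookkeeping once the $\hat{\mathbb{G}}_m$ estimate is available.
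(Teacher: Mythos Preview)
Your treatment of the first limit via the continuous-time Green function is a legitimate alternative to the paper's argument, though organized differently. The paper does not split a time integral at a threshold $T$; instead it decomposes $\hat{\mathbb P}^m_{\mathtt x_0}[H_{\mathtt x}<\infty]$ as a sum over the discrete jump-times of $\mathsf Y$, namely $\sum_{n\geq [\rho]} P[Y_n=\mathtt y,\ z\in \mathsf Z_{[\sigma^{\mathsf Y}_n,\sigma^{\mathsf Y}_{n+1}]}]$, and then uses independence of $(Y,\sigma^{\mathsf Y})$ from $\mathsf Z$ together with Lemma~\ref{lem:zk} to reduce this to $c\sum_{n\geq[\rho]} p^{\hat{\mathbb G}_m}_n(\mathtt y_0,\mathtt y)\,n^{-1/2}$, which \ref{hyp:trans} controls directly. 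The restriction $n\geq[\rho]$ absorbs the short-time regime automatically, so no separate case analysis on which coordinate is far is needed. One caveat: your appeal to Poisson domination of $\eta^{\mathsf Y}$ requires a \emph{global} upper bound on the vertex weights of $\hat{\mathbb G}_m$, not merely the local bound on $\bar{\mathbb C}_m$ that you cite; the paper's argument tacitly uses the same global bound when comparing $\sigma^{\mathsf Y}_n$ with $e_1+\cdots+e_n$.

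The genuine gap is in your reduction for the second limit. Your excursion decomposition at $\partial(B(o_m,r_N-1)\times\mathbb Z)$ does show that each excursion into the slab hits $\mathtt x$ with probability at most $p_N\to 0$ (transferred to $\hat{\mathbb G}_m$ via the isomorphism), but you have no control over the \emph{number} of such excursions: transience of $\mathbb G_m\times\mathbb Z$ is precisely what you are trying to prove, so the walk may re-enter the slab infinitely often, and there is no small ``return probability'' available to sum a geometric series---the only small parameter is $p_N$ itself, which does not bound the expected number of returns to the slab. The paper circumvents this entirely by transferring the heat-kernel bound rather than the hitting probability: for each fixed $\mathtt y_0\in\mathbb G_m$, $\mathtt y\in B(o_m,\rho_0)$ and $n\geq 0$, choose $N$ so large that $r_N>n+d(\mathtt y_0,o_m)$; then the walk from $\mathtt y_0$ cannot leave $B(o_m,r_N-1)$ in $n$ steps, so $p^{\mathbb G_m}_n(\mathtt y_0,\mathtt y)=P^{\hat{\mathbb G}_m}_{\hat\psi(\mathtt y_0)}[Y_n=\hat\psi(\mathtt y),\,T_{B(\hat o_m,r_N-1)}>n]\leq p^{\hat{\mathbb G}_m}_n(\hat\psi(\mathtt y_0),\hat\psi(\mathtt y))$ via $\hat\psi=\psi_m\circ\phi_m^{-1}$. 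This gives the analogue of \ref{hyp:trans} for $\mathbb G_m$, after which the same summation argument as for $\hat{\mathbb G}_m$ yields the second limit directly.
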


The proof of Lemma \ref{lem:mntrans} requires the following two lemmas of frequent use.

\begin{lemma} \label{lem:sd}
Let $\mathcal G$ be a weighted graph such that $0 < \inf_{\mathsf y} w_{\mathsf y} \leq \sup_{\mathsf y} w_{\mathsf y} < \infty$.
\begin{align}
&\textrm{Under $P^{\mathcal G}_{\mathsf y}$, } e_n = (\sigma^{\mathsf Y}_n - \sigma^{\mathsf Y}_{n-1}) w_{Y_{n-1}}, \, n \geq 1, \textrm{ is a sequence of}\label{sd1}\\ 
&\textrm{iid $\exp(1)$ random variables, independent of } Y, \textrm{ and} \nonumber\\
&\eta^{\inf_{\mathsf y} w_{\mathsf y}}_t \leq \eta^{\mathsf Y}_t \leq \eta^{\sup_{\mathsf y} w_{\mathsf y}}_t, \textrm{ for } t \geq 0, \label{sd2}
\end{align}
where $\eta^\nu_t = \sup \{n \geq 0: e_1 + \ldots + e_n \leq \nu t\}$, $t \geq 0$, with $(e_n)_{n \geq 1}$ as defined above, is a Poisson process with rate $\nu \geq 0$.
\end{lemma}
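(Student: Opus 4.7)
The plan is to unpack the standard construction of the continuous-time Markov chain $\mathsf Y$ from its discrete skeleton $Y$ and a sequence of iid exponential clocks, and then derive both claims from the inequality $\inf_{\mathsf y} w_{\mathsf y} \leq w_{Y_{n-1}} \leq \sup_{\mathsf y} w_{\mathsf y}$ applied pointwise.

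For (\ref{sd1}), I would argue as follows. Under $P^{\mathcal G}_{\mathsf y}$, the chain $\mathsf Y$ is a continuous-time Markov chain with transition rates $w_{\mathsf y,\mathsf y'}$, so from state $\mathsf y$ the holding time is exponential of rate $w_{\mathsf y}$ and the embedded skeleton $Y$ has transition probabilities $p^{\mathcal G}(\mathsf y,\mathsf y')=w_{\mathsf y,\mathsf y'}/w_{\mathsf y}$. By the strong Markov property applied at the successive jump times $\sigma^{\mathsf Y}_n$, conditionally on the skeleton $Y=(Y_0,Y_1,\dots)$ the holding times $\sigma^{\mathsf Y}_n-\sigma^{\mathsf Y}_{n-1}$ are independent with laws $\exp(w_{Y_{n-1}})$. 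Rescaling by $w_{Y_{n-1}}$ then makes each $e_n$ distributed as $\exp(1)$ conditionally on $Y$, so the joint conditional law of $(e_n)_{n\geq 1}$ does not depend on $Y$. This gives unconditional independence of $(e_n)_{n\geq 1}$ from $Y$ and that the $e_n$ are iid $\exp(1)$. Alternatively, one can invoke the standard realization of $\mathsf Y$ on a product space where $(e_n)$ is given a priori as iid $\exp(1)$ and one sets $\sigma^{\mathsf Y}_n=\sum_{k=1}^n e_k/w_{Y_{k-1}}$; the resulting process is then shown to have the prescribed generator, whence the identification.

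For (\ref{sd2}), I would use the representation $\sigma^{\mathsf Y}_n=\sum_{k=1}^n e_k/w_{Y_{k-1}}$ from the previous step. The two-sided bound on the vertex weights yields, pathwise,
\begin{equation*}
\frac{1}{\sup_{\mathsf y} w_{\mathsf y}}\sum_{k=1}^n e_k \;\leq\; \sigma^{\mathsf Y}_n \;\leq\; \frac{1}{\inf_{\mathsf y} w_{\mathsf y}}\sum_{k=1}^n e_k,
\end{equation*}
so the event $\{\sigma^{\mathsf Y}_n\leq t\}$ is contained in $\{\sum_{k=1}^n e_k\leq t\sup_{\mathsf y}w_{\mathsf y}\}$ and contains $\{\sum_{k=1}^n e_k\leq t\inf_{\mathsf y}w_{\mathsf y}\}$. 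Taking the supremum over $n$ in the definition of $\eta^{\mathsf Y}_t$ and $\eta^{\nu}_t$ immediately gives the sandwich $\eta^{\inf w}_t\leq \eta^{\mathsf Y}_t\leq \eta^{\sup w}_t$. Finally, the process $\eta^{\nu}_t$ counts the number of arrivals up to time $\nu t$ of a renewal process with iid $\exp(1)$ interarrivals, and is therefore a rate-$\nu$ Poisson process by a time change.

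The proof carries essentially no obstacle; the only point one should be careful about is the conditional-on-$Y$ argument in (\ref{sd1}), which has to be handled properly to conclude \emph{unconditional} independence of $(e_n)$ and $Y$ rather than merely conditional independence. This is immediate once one observes that the conditional distribution of $(e_n)$ given $Y$ is the deterministic law of iid $\exp(1)$ variables.
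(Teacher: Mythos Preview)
Your proposal is correct and follows essentially the same approach as the paper's proof. The paper also refers to the standard construction of the continuous-time chain for (\ref{sd1}) (citing Norris), and for (\ref{sd2}) carries out the same pointwise comparison of holding times with $e_k/\sup_{\mathsf y} w_{\mathsf y}$ and $e_k/\inf_{\mathsf y} w_{\mathsf y}$ inside the definition of $\eta^{\mathsf Y}_t$; your version via the representation $\sigma^{\mathsf Y}_n=\sum_{k=1}^n e_k/w_{Y_{k-1}}$ is the same argument written slightly differently.
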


\begin{proof}
The assertion \eqref{sd1} follows from a standard construction of the continuous-time Markov chain $\mathsf Y$, see for example \cite{N97}, pp.~88, 89. For \eqref{sd2}, note that for any $k \geq 0$,
\begin{align}
\label{e:sd1} \frac{w_{Y_k}}{\sup_{\mathsf y} w_{\mathsf y}} \leq 1 \leq   \frac{w_{Y_k}}{\inf_{\mathsf y} w_{\mathsf y}},
\end{align}
hence for $t \geq 0$,
\begin{align*}
 \eta^{\mathsf Y}_t &= \sup \Bigl\{ n \geq 0: \sum_{k=1}^n (\sigma^{\mathsf Y}_k - \sigma^{\mathsf Y}_{k-1}) \leq t \Bigr\} \\
&\stackrel{\eqref{e:sd1}}{\leq} \sup \Bigl\{ n \geq 0: \sum_{k=1}^n (\sigma^{\mathsf Y}_k - \sigma^{\mathsf Y}_{k-1})  \frac{w_{Y_{k-1}}}{\sup_{\mathsf y} w_{\mathsf y}} \leq t \Bigr\} = \eta^{\sup_{\mathsf y} w_{\mathsf y}}_t,
\end{align*}
as well as
\begin{align*}
 \eta^{\mathsf Y}_t &\stackrel{\eqref{e:sd1}}{\geq} \sup \Bigl\{ n \geq 0: \sum_{k=1}^n (\sigma^{\mathsf Y}_k - \sigma^{\mathsf Y}_{k-1})  \frac{w_{Y_{k-1}}}{\inf_{\mathsf y} w_{\mathsf y}} \leq t \Bigr\} = \eta^{\inf_{\mathsf y} w_{\mathsf y}}_t.
\end{align*}

\end{proof}


\begin{lemma} \label{lem:zk}
\begin{align}
&P^{\mathbb Z}_z [z' \in {\mathsf Z}_{[s,t]}] \leq c \frac{1+t-s}{\sqrt{s}}, \textrm{ for } 0 < s \leq t < \infty, \, z, z' \in {\mathbb Z}. \label{zk}
\end{align}
\end{lemma}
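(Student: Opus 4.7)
The plan is to convert the hitting event into an occupation-time estimate and then use the standard Gaussian upper bound on the continuous-time heat kernel on $\mathbb Z$. First, for $0 < s \leq 1$ the right-hand side $c(1+t-s)/\sqrt{s}$ is at least $c$, so by choosing $c$ large enough the bound becomes trivial. Hence I may assume $s \geq 1$ from now on.

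Next, I would introduce the stopping time $\tau = \inf\{u \geq s : {\mathsf Z}_u = z'\}$, so that $\{z' \in {\mathsf Z}_{[s,t]}\} = \{\tau \leq t\}$. Applying the strong Markov property at $\tau$ and using translation invariance of the continuous-time simple random walk on $\mathbb Z$, I get
\begin{align*}
E^{\mathbb Z}_z\Bigl[\int_s^{t+1} \mathbf{1}_{\{{\mathsf Z}_u = z'\}}\, du\Bigr]
&\geq E^{\mathbb Z}_z\Bigl[\mathbf{1}_{\{\tau \leq t\}} \int_\tau^{\tau+1} \mathbf{1}_{\{{\mathsf Z}_u = z'\}}\, du\Bigr] \\
&= P^{\mathbb Z}_z[\tau \leq t]\cdot c_*,
\end{align*}
where $c_* = \int_0^1 q^{\mathbb Z}_u(0,0)\,du$ is a strictly positive constant independent of $z'$. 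On the other hand, Fubini's theorem gives
\begin{align*}
E^{\mathbb Z}_z\Bigl[\int_s^{t+1} \mathbf{1}_{\{{\mathsf Z}_u = z'\}}\, du\Bigr] = \int_s^{t+1} q^{\mathbb Z}_u(z,z')\,du.
\end{align*}

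To finish, I invoke the standard heat-kernel bound $q^{\mathbb Z}_u(z,z') \leq C/\sqrt{u}$ valid for all $u \geq 1$, which follows from the explicit representation $q^{\mathbb Z}_u(z,z') = e^{-u} I_{|z-z'|}(u)$ together with the asymptotics of the modified Bessel function (or equivalently from Poissonization of the discrete local central limit theorem). Integrating, and using $s \geq 1$,
\begin{align*}
\int_s^{t+1} q^{\mathbb Z}_u(z,z')\,du \leq C \int_s^{t+1} \frac{du}{\sqrt u} = \frac{2C(t+1-s)}{\sqrt{t+1}+\sqrt s} \leq \frac{2C(1+t-s)}{\sqrt s}.
\end{align*}
Combining with the lower bound above and dividing by $c_*$ yields \eqref{zk}. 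The only non-routine ingredient is the heat-kernel bound; the stopping-time/strong-Markov device that converts a hitting probability into an occupation-time integral is classical and the rest is calculus.
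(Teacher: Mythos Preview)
Your proof is correct and follows essentially the same approach as the paper: convert the hitting probability into an occupation-time lower bound via the strong Markov property at the first hitting time after $s$, then bound the occupation time from above by integrating the heat-kernel estimate $q^{\mathbb Z}_u(z,z') \leq C/\sqrt{u}$. The only cosmetic differences are that you split off the case $s \leq 1$ explicitly (the paper instead uses that $q^{\mathbb Z}_u \leq c/\sqrt{u}$ holds for all $u>0$, trivially for small $u$), and you take $c_* = \int_0^1 q^{\mathbb Z}_u(0,0)\,du$ where the paper uses the slightly smaller $E^{\mathbb Z}_{z'}[\sigma^{\mathsf Z}_1 \wedge 1]$.
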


\begin{proof}
By the strong Markov property applied at time $s+ {\mathsf H}_{z'} \circ \theta_s$,
\begin{align}
E^{\mathbb Z}_z \bigl[ \int_s^{t+1} \mathbf{1}_{\{{\mathsf Z}_r=z'\}} dr \bigr] &\geq E^{\mathbb Z}_z \bigl[ s+ {\mathsf H}_{z'} \circ \theta_s \leq t, \int_{s+{\mathsf H}_{z'} \circ \theta_s}^{t+1} \mathbf{1}_{\{{\mathsf Z}_r=z'\}} dr \bigr] \label{eq:zk1}\\
&\geq P^{\mathbb Z}_z [{\mathsf H}_{z'} \circ \theta_s \leq t-s] E^{\mathbb Z}_{z'} \bigl[ \int_0^1 \mathbf{1}_{\{{\mathsf Z}_r = z'\}} dr \bigr] \nonumber\\
& \geq P^{\mathbb Z}_z [z' \in {\mathsf Z}_{[s,t]}] E^{\mathbb Z}_{z'}[ \sigma^{\mathsf Z}_1 \wedge 1] \geq c P^{\mathbb Z}_z [z' \in {\mathsf Z}_{[s,t]}]. \nonumber
\end{align}
It follows from the local central limit theorem, see \cite{L91}, (1.10), p.~14, (or from a general upper bound on heat kernels of random walks, see Corollary 14.6 in \cite{W00}) that 
\begin{align}
P^{\mathbb Z}_z [ Z_n = z'] \leq c/\sqrt{n}, \textrm{ for all $z$ and $z'$ in $\mathbb Z$ and $n \geq 1$.} \label{eq:zk2}
\end{align}
 Using an exponential bound on the probability that a Poisson variable of intensity $2t$ is not in the interval $[t,4t]$, it readily follows that $P^{\mathbb Z}_z [ {\mathsf Z}_t = z'] \leq c/\sqrt{t}$ for all $t>0$, hence
\begin{align*}
E^{\mathbb Z}_z \bigl[ \int_s^{t+1} \mathbf{1}_{\{{\mathsf Z}_r=z'\}} dr \bigr] \leq c\int_s^{t+1} \frac{1}{\sqrt{r}} dr \leq c \frac{1+t-s}{\sqrt{s}}.
\end{align*}
With (\ref{eq:zk1}), this implies (\ref{zk}). 
\end{proof}

\begin{proof}[Proof of Lemma~\ref{lem:mntrans}.]
Denote by ${\mathbb G}$ either one of the graphs ${\hat {\mathbb G}}_m$ or ${\mathbb G}_m$ and by $\mathbb P$ the corresponding probabilities ${\hat {\mathbb P}}^m$ and ${\mathbb P}^m$. Assume for the moment that for all $n \geq c(\epsilon,\rho_0)$,
\begin{align}
\sup_{{\mathtt y}_0 \in {\mathbb G}} \sup_{{\mathtt y} \in B(o,\rho_0)} p^{\mathbb G}_n({\mathtt y}_0, {\mathtt y}) \leq c(\rho_0) n^{-\frac{1}{2}-\epsilon}, \label{eq:mntrans1}
\end{align}
where $o$ denotes the corresponding vertex ${\hat o}_{m,N}$ or $o_m$. For any points ${\mathtt x}=({\mathtt y},z)$ in $B((o,0),\rho_0)$ and ${\mathtt x}_0=({\mathtt y}_0,z_0)$ in ${\mathbb G} \times {\mathbb Z}$ such that $d({\mathtt x}_0, {\mathtt x}) \geq \rho$, we have
\begin{align}
{\mathbb P}_{{\mathtt x}_0}[H_{\mathtt x} < \infty] \leq  \sum_{n= [\rho]}^\infty P_{({\mathtt y}_0,z_0)} [ Y_n = {\mathtt y} , z \in {\mathsf Z}_{[\sigma^{\mathsf Y}_n, \sigma^{\mathsf Y}_{n+1}]}],  \label{eq:mntrans2}
\end{align}
By independence of  $(Y,\sigma^{\mathsf Y})$ and $\mathsf Z$, the probability in this sum can be rewritten as
\begin{align*}
E^G_{{\mathtt y}_0} \Bigl[ Y_n = {\mathtt y}, P^{\mathbb Z}_{z_0}[ z \in {\mathsf Z}_{[s,t]}] \bigl|_{\substack{s = \sigma^{\mathsf Y}_n \\ t= \sigma^{\mathsf Y}_{n+1}}} \Bigr],
\end{align*}
which by the estimate (\ref{zk}) and the strong Markov property at time $\sigma^{\mathsf Y}_n$ is smaller than
\begin{align*}
cE^G_{{\mathtt y}_0} \Bigl[ Y_n = {\mathtt y}, \frac{1 + \sigma_1 \circ \theta_{\sigma^{\mathsf Y}_n}}{\sqrt{\sigma^{\mathsf Y}_n}} \Bigr] \stackrel{(\ref{hyp:w})}{\leq} cE^G_{{\mathtt y}_0} \Bigl[ Y_n = {\mathtt y}, \frac{1}{\sqrt{\sigma^{\mathsf Y}_n}} \Bigr]. 
\end{align*}
By (\ref{sd1}) and \ref{hyp:w}, the sum in (\ref{eq:mntrans2}) can be bounded by 
\begin{align}
c\sum_{n=[\rho]}^\infty p^G_n({\mathtt y}_0,{\mathtt y}) E \Bigl[ \frac{1}{\sqrt{e_1 + \ldots + e_n}} \Bigr] \leq c\sum_{n=[\rho]}^\infty p^G_n({\mathtt y}_0,{\mathtt y}) \frac{1}{\sqrt{n}}, \label{eq:mntrans3}
\end{align}
where we have used that $E[1/(e_1 + \ldots + e_n)] = 1/(n-1)$ for $n \geq 2$ (note that $e_1 + \ldots + e_n$ is $\Gamma(n,1)$-distributed), together with Jensen's inequality. By the bound assumed in (\ref{eq:mntrans1}), this implies with (\ref{eq:mntrans2}) that
\begin{align*}
\sup_{\substack{d({\mathtt x},(o,0)) \leq \rho_0 \\ d({\mathtt x}_0,{\mathtt x}) \geq \rho}} {\mathbb P}_{{\mathtt x}_0}[H_{\mathtt x} < \infty] \leq  c(\rho_0) \sum_{n=[\rho]}^\infty n^{-1-\epsilon}.
\end{align*}
Since the right-hand side tends to $0$ as $\rho$ tends to infinity, this proves both claims in (\ref{mntrans}), provided (\ref{eq:mntrans1}) holds for ${\hat {\mathbb G}}_m$ and ${\mathbb G}_m$ in place of $\mathbb G$. 
In fact, (\ref{eq:mntrans1}) does hold for ${\mathbb G} = {\hat {\mathbb G}}_m$ by assumption \ref{hyp:trans}, and also holds for ${\mathbb G} = {\mathbb G}_m$ by the following argument: Consider any ${\mathtt y}_0 \in {\mathbb G}_m$, ${\mathtt y} \in B(o_m,\rho_0)$ and $n \geq 0$.
Choose $N$ sufficiently large such that $r_N - d({\mathtt y}_0,o_m) > n$ and both ${\mathtt y}_0$ and ${\mathtt y}$ are contained in $B(o_m,r_N)$ (cf.~\ref{hyp:iso1}). Using the isomorphism ${\hat \psi} = \psi_m \circ \phi^{-1}_m$ from $B(o_m,r_N)$ to $B({\hat o}_m,r_N) \subset {\hat {\mathbb G}}_m$, we deduce that
\begin{align}
p^{{\mathbb G}_m}_n({\mathtt y}_0, {\mathtt y}) &= P^{{\mathbb G}_m}_{{\mathtt y}_0} [Y_n = {\mathtt y}, T_{B(o_m,r_N-1)} \geq r_N - d({\mathtt y}_0,o_m)]. \label{eq:mntrans5} \\
&= P^{{\hat {\mathbb G}}_m}_{{\hat \psi}({\mathtt y}_0)} [Y_n = {\hat \psi}({\mathtt y}), T_{B({\hat o}_m,r_N-1)} \geq r_N - d({\mathtt y}_0,o_m)] \nonumber\\
&\leq p^{{\hat {\mathbb G}}_m}_n({\hat \psi}({\mathtt y}_0), {\hat \psi}({\mathtt y})) \leq c(\rho_0) n^{-\frac{1}{2}-\epsilon}, \nonumber
\end{align}
using assumption \ref{hyp:trans} in the last step. This concludes the proof of Lemma~\ref{lem:mntrans}.
\end{proof}

\section{Auxiliary results on excursions and local times} \label{sec:grid}

In this section we reproduce a suitable version of the partially inhomogeneous grids on $\mathbb Z$ introduced in Section~2 of \cite{S08}. These grids allow to relate excursions of the walk $Z$ associated to the grid points to the total time elapsed and to the local time $\hat L$ of $Z$. This is essentially the content of Proposition~\ref{pr:sp} below, quoted from \cite{S08}. We then complement this result with an estimate relating the local time $\hat L$ of $Z$ to the local time $\mathsf L$ of the continuous-time process $\mathsf Z$ in Lemma~\ref{lem:loc}.

\vspace{2mm}

For integers $1 \leq d_N \leq h_N$ and points $z^*_{l,N}$, $1 \leq l \leq M$, in $\mathbb Z$ (to be specified below), we define the intervals
\begin{align}
I_l = [z^*_l-d_N,z^*_l+d_N] \subseteq {\tilde I}_l = (z^*_l-h_N,z^*_l+h_N), \label{def:i}
\end{align}
dropping the $N$ from $z^*_{l,N}$ for ease of notation. The collections of these intervals are denoted by
\begin{align}
{\mathcal I} = \{I_l, 1 \leq l \leq M\}, \textrm{ and } {\tilde {\mathcal I}} = \{{\tilde I}_l, 1 \leq l \leq M\}. \label{def:I}
\end{align}
The anisotropic grid ${\mathcal G}_N \subset {\mathbb Z}$, is defined as in \cite{S08}, (2.4):
\begin{align}
&{\mathcal G}_N = {\mathcal G}_N^* \cup {\mathcal G}^0, \textrm{ where } {\mathcal G}_N^* = \{z^*_l, 1 \leq l \leq M\} \textrm{ and} \label{def:grid}\\
&{\mathcal G}^0_N = \{ z \in 2h_N{\mathbb Z}: |z-z^*_l| \geq 2h_N, \textrm{ for } 1 \leq l \leq M\}. \nonumber
\end{align}
It remains to choose $d_N$, $h_N$ and $z^*_l$. In \cite{S08}, no upper bound other than $o(|G_N|)$ is needed on the distance between neighboring grid points, but we want an upper bound not much larger than $\lambda_N^{-1/2}$. A consequence of this requirement is that unlike in \cite{S08}, we may attach several points $z^*_l$ to the same limit $v_m$ in \ref{hyp:zconv}. We satisfy this requirement by a judicious choice such that
\begin{align}
&\lambda_N^{-1/2} |G_N|^{\epsilon/8} \leq d_N, \textrm{ } d_N =o(h_N), \textrm{ } h_N \leq \lambda_N^{-1/2} |G_N|^{\epsilon/4}, \label{s1}\\
& \min_{1 \leq l < l' \leq M} |z^*_l - z^*_{l'}| \geq 100h_N, \textrm{ and}\label{s2}\\
& \{z_1, \ldots, z_M\} \subseteq \cup_{l=1}^M [z^*_l-[d_N/2],z^*_l+[d_N/2]], \textrm{ for all } N \geq c(\epsilon,M). \label{s3}
\end{align}
\begin{proposition}\label{lem:s}
Points $z^*_1, \ldots, z^*_M$ in $\mathbb Z$ and sequences $d_N$, $h_N$ in $\mathbb N$ satisfying \textup{(\ref{s1})-(\ref{s3})} exist.
\end{proposition}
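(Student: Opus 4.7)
The plan is a pigeonhole argument in logarithmic scale: choose $d_N$ and $h_N$ so that no ``forbidden'' distance among the $z_{m,N}$ falls strictly between $d_N$ and $\approx h_N$, then cluster the $z_{m,N}$ accordingly and place the centers $z^*_l$.

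\textbf{Choice of $d_N$ and $h_N$.} Let
$$S_N = \{|z_{m,N}-z_{m',N}| : 1 \leq m < m' \leq M,\, v_m = v_{m'}\},$$
so $|S_N| \leq \binom{M}{2}$. The interval $J_N = [\lambda_N^{-1/2}|G_N|^{\epsilon/8},\, \lambda_N^{-1/2}|G_N|^{\epsilon/4}]$ has logarithmic length $(\epsilon/8)\log|G_N|$, which tends to infinity. The points of $S_N \cap J_N$ partition $J_N$ (logarithmically) into at most $\binom{M}{2}+1$ sub-intervals; let $(a_N, b_N) \subset J_N$ be the longest, of logarithmic length tending to infinity. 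Set $d_N = 2\lceil a_N/2 \rceil$ (an even integer $\geq a_N$) and $h_N = \lfloor b_N/101 \rfloor$. Then $d_N \geq \lambda_N^{-1/2}|G_N|^{\epsilon/8}$, $h_N \leq \lambda_N^{-1/2}|G_N|^{\epsilon/4}$, and $h_N/d_N \to \infty$ (since $b_N/a_N \to \infty$), which gives (\ref{s1}). By construction $S_N \cap (d_N, 101 h_N) = \emptyset$.

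\textbf{Dichotomy.} For pairs with $v_m \neq v_{m'}$, (\ref{hyp:zconv}) gives $|z_{m,N}-z_{m',N}| \geq c(m,m')|G_N|$ for large $N$. Since $h_N \leq \lambda_N^{-1/2}|G_N|^{\epsilon/4} = o(|G_N|)$ by (\ref{hyp:mix}), one obtains $|z_{m,N}-z_{m',N}| \geq 101 h_N$ for large $N$. Combined with the previous step, for $N \geq c(\epsilon,M)$ every pair $1 \leq m,m' \leq M$ satisfies
\begin{equation}
|z_{m,N}-z_{m',N}| \leq d_N \quad \text{or} \quad |z_{m,N}-z_{m',N}| \geq 101 h_N. \label{eq:dicho}
\end{equation}

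\textbf{Definition of the $z^*_l$.} Declare $m \sim m'$ iff $|z_{m,N}-z_{m',N}| \leq d_N$; this is reflexive and symmetric, and transitive because $m \sim m' \sim m''$ yields $|z_m-z_{m''}| \leq 2d_N < 101 h_N$ (since $d_N = o(h_N)$), forcing $|z_m-z_{m''}| \leq d_N$ by (\ref{eq:dicho}). Let $C_1,\ldots,C_{M'}$, with $M' \leq M$, be the equivalence classes, each of diameter $\leq d_N$. Set
$$z^*_l = \lfloor \tfrac{1}{2}(\min_{m \in C_l} z_{m,N} + \max_{m \in C_l} z_{m,N}) \rfloor, \quad 1 \leq l \leq M'.$$
Since $d_N$ is even, every $z_{m,N}$ with $m \in C_l$ lies in $[z^*_l-[d_N/2], z^*_l+[d_N/2]]$, establishing (\ref{s3}). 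For $l \neq l'$, picking any $m \in C_l$, $m' \in C_{l'}$, the dichotomy gives $|z_{m,N}-z_{m',N}| \geq 101 h_N$, hence $|z^*_l - z^*_{l'}| \geq 101 h_N - d_N \geq 100 h_N$, which is (\ref{s2}). If $M' < M$, append $M-M'$ dummy centers at mutual distances $\gg h_N$ and far from the existing $z^*_l$'s to pad the list of $M$ indices without affecting (\ref{s2}) or (\ref{s3}).

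The main obstacle is the simultaneous satisfaction of the separation (\ref{s2}) and covering (\ref{s3}) requirements against the uncontrolled pairwise distances when $v_m = v_{m'}$; the geometric-scale pigeonhole on the finite set $S_N$ is the key device, producing the dichotomy (\ref{eq:dicho}) that permits the clustering.
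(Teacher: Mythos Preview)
Your proof is correct. The logarithmic pigeonhole on the at most $\binom{M}{2}$ pairwise distances in $S_N$ produces a gap $(a_N,b_N)$ with $b_N/a_N\to\infty$, which is exactly what is needed to manufacture $d_N$ and $h_N$ satisfying (\ref{s1}) while forcing the dichotomy (\ref{eq:dicho}); the clustering and center-placement then go through cleanly. The small arithmetic point about $d_N$ even guaranteeing $[z^*_l-d_N/2,z^*_l+d_N/2]$ covers each class (via the parity of $\max-\min$) is also fine.

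The paper takes an essentially equivalent route but packages the scale-selection step as a stand-alone metric-space covering lemma (Lemma~\ref{lem:met}): starting from balls of radius $a$ one iteratively merges any two whose $b$-dilations overlap, and since the count drops each time, after at most $M$ rounds one obtains a scale $p\in[a,b^{2M}a]$ at which the balls of radius $p$ cover $\{z_1,\dots,z_M\}$ and the balls of radius $bp$ are disjoint. One then sets $d_N=[2p]$, $h_N=[bp/100]$ with $a=[\lambda_N^{-1/2}|G|^{\epsilon/8}]$ and $b=[(|G|^{\epsilon/8})^{1/(2M+1)}]$. The underlying mechanism is the same pigeonhole across $O(M)$ logarithmic scales; the difference is that the paper isolates a reusable lemma (and indeed reuses it in Section~\ref{sec:ex} to construct the sets $C_m$ for the box and the tree), whereas your argument is self-contained and slightly more direct for this single application. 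Both approaches yield the same quantitative outcome: $h_N/d_N$ is at least a fixed positive power of $|G_N|$.
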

The proof of Proposition \ref{lem:s} is a consequence of the following simple lemma, asserting that for prescribed numbers $a, b$ and $q \geq 2$, any $M$ points in a metric space can be covered by balls of radius between $a$ and $b^{2M}a$, such that the balls with radius multiplied by $b$ are disjoint and no more than $M$ balls are required.
\begin{lemma} \label{lem:met}
Let ${\mathcal X}$ be a metric space and $x_1, \ldots, x_M$, $M \geq 1$, points in $\mathcal X$. Consider real numbers $a \geq 1$ and $b \geq 2$. Then for some $M_* \leq M$ and $a \leq p \leq b^{2M} a$, there are points $\{x^*_1, \ldots, x^*_{M_*}\}$ in $\mathcal X$ such that 
\begin{align*}
\bigcup_{1 \leq i \leq M_*} B(x^*_i, p) \supseteq \{x_1, \ldots, x_M\} \textrm{, and } \textrm{the balls } (B(x^*_i, bp))_{i=1}^{M_*} \textrm{ are disjoint,}
\end{align*}
where $B(x,r)$ denotes the closed ball of radius $r \geq 0$ centered at $x \in {\mathcal X}$.
\end{lemma}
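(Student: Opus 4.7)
The plan is to combine a pigeonhole argument over a geometric sequence of candidate radii with a simple ``merging'' step for a minimum cover. For each $r \geq a$, let $N(r)$ denote the smallest number of closed balls in $\mathcal X$ of radius $r$ whose union contains $\{x_1,\ldots,x_M\}$. Then $N$ is non-increasing in $r$, one has $N(a) \leq M$ by taking the centers at the $x_i$'s themselves, and $N(r) \geq 1$. The strategy is to select the radius $p$ as a geometric multiple of $a$ at which the covering number fails to decrease, and to verify that any minimum cover at this radius already has disjoint $b$-expansions.

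Concretely, I would consider the radii $r_k = b^{2k}a$ for $k=0,1,\ldots,M$. The sequence $N(r_0), N(r_1), \ldots, N(r_M)$ consists of $M+1$ non-increasing integers in $[1,M]$, so its total decrement is at most $M-1$ while there are $M$ consecutive intervals between the $r_k$. Thus, by the pigeonhole principle, there exists $k \in \{0,\ldots,M-1\}$ with $N(r_k) = N(r_{k+1})$. I would then set $p = r_k$, so that $a \leq p \leq b^{2(M-1)}a \leq b^{2M}a$, choose $x^*_1,\ldots,x^*_{M_*}$ to be a minimum cover of $\{x_1,\ldots,x_M\}$ by balls of radius $p$, so $M_* = N(p) \leq M$, and check that these are the required centers.

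Covering is immediate from the definition of $N(p)$, so the only remaining point is the disjointness of the expansions $B(x^*_i, bp)$. I would argue this by contradiction: if $B(x^*_j, bp) \cap B(x^*_{j'}, bp)$ contains a point $z$ for some $j \neq j'$, then for any $x \in B(x^*_j, p) \cup B(x^*_{j'}, p)$ the triangle inequality gives $d(z,x) \leq bp + p = (1+b)p$, hence
\[
B(x^*_j, p) \cup B(x^*_{j'}, p) \subseteq B(z, (1+b)p).
\]
Replacing the two balls $B(x^*_j, p), B(x^*_{j'}, p)$ in the cover by the single ball $B(z,(1+b)p)$ yields a cover of $\{x_1,\ldots,x_M\}$ by $M_* - 1$ balls of radius $(1+b)p$. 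Since $b \geq 2$ implies $1+b \leq b^2$, these balls have radius at most $b^2 p = r_{k+1}$, so $N(r_{k+1}) \leq M_* - 1 < N(r_k)$, contradicting the choice of $k$.

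I do not expect a real obstacle here: the only point requiring attention is that the merging step forces the inequality $(1+b) \leq b^2$, and this is exactly why the hypothesis $b \geq 2$ (or, more precisely, $b \geq (1+\sqrt{5})/2$) appears, together with the factor $b^2$ between successive radii, which in turn is what produces the bound $b^{2M}a$ in the statement. No further metric input on $\mathcal X$ is needed.
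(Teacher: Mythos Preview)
Your proof is correct and follows essentially the same approach as the paper: both arguments track the minimum covering number along the geometric sequence of radii $b^{2k}a$, use the merging step (with the same inequality $1+b \leq b^2$ coming from $b \geq 2$) to show that non-disjoint $b$-expansions force the covering number to strictly decrease, and then invoke the resulting descent/pigeonhole to find the desired scale. The only cosmetic difference is that the paper phrases it as finding the first $m$ at which disjointness holds, whereas you phrase it as finding a $k$ at which $N(r_k)=N(r_{k+1})$; these are equivalent formulations of the same argument.
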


\begin{proof}[Proof of Proposition~\ref{lem:s}.]
Lemma~\ref{lem:met}, applied with ${\mathcal X} = {\mathbb Z}$ and the points $z_1, \ldots, z_M$ with $a= [\lambda_N^{-1/2} |G|^{\epsilon/8}]$ and $b=[(|G|^{\epsilon/8})^{1/(2M+1)}]$, yields points $z^*_1, \ldots, z^*_{M_*}$ in $\mathbb Z$ and a $p$ between $a$ and $b^{2M} a$ such that (\ref{s1})-(\ref{s3}) hold for $d_N=[2p]$, $h_N=[bp/100]$ and $M_*$ in place of $M$. The additional points $z^*_{M_*+1}, \ldots, z^*_M$ can be chosen arbitrarily subject only to (\ref{s2}).
\end{proof}

\begin{proof}[Proof of Lemma~\ref{lem:met}.]
For $m \geq 0$, set
\begin{align*}
k_m = \min \{ k \geq 0: \textrm{ for some } x'_1, \ldots, x'_k \textrm{ in } {\mathcal X}, \cup_{i=1}^k B(x'_i, b^{2m}a) \supseteq \{x_1, \ldots, x_M\} \},
\end{align*}
and denote points for which the minimum is attained by $x^m_1, \ldots, x^m_{k_m}$. The first observation on $k_m$ is that clearly $1 \leq k_m \leq M$. The second observation is that
\begin{align*}
\textrm{either the balls $B(x^m_i,b^{2m+1}a)$, $1 \leq i \leq k_m$, are disjoint, or $k_{m+1} < k_m$, for $m \geq 0$.}
\end{align*}
Indeed, assume that ${\bar x} \in B(x^m_i,b^{2m+1} a) \cap B(x^m_j,b^{2m+1} a)$ for $1 \leq i < j \leq k_m$. Then since $b \geq 2$, the $k_m -1$ balls of radius $b^{2(m+1)} a$ centered at $(\{x^m_1, \ldots, x^m_{k_m}\} \cup \{{\bar x}\}) \setminus \{x^m_i, x^m_j\}$ still cover $\{x_1, \ldots, x_M\}$. Thanks to these two observations, we may define 
\begin{align*}
m_* = \min \{ m \geq 0: \textrm{ the balls } B(x^m_i,b^{2m+1}a), \, 1 \leq i \leq k_m \textrm{ are disjoint} \} \leq M,\
\end{align*}
and set $M_* = k_{m_*}$, $x^*_i = x^{m_*}_i$ for $1 \leq i \leq M_*$ and $p=b^{2m_*}a$.
\end{proof}

The grids ${\mathcal G}_N$ we consider from now on are specified by (\ref{def:i})-(\ref{s3}). In order to define the associated excursions, we define the sets $C$ and $O$, whose components are intervals of radius $d_N$ and $h_N$, centered at the points in the grid ${\mathcal G}_N$, i.e. 
\begin{align}
\label{def:CO} C = {\mathcal G}_N + [-d_N,d_N] \subset O= {\mathcal G}_N + (-h_N,h_N).
\end{align}
The times $R_n$ and $D_n$ of return to $C$ and departure from $O$ of the process $Z$ are defined as 
\begin{align}
&R_1 = H_C, D_1 = T_O \circ \theta_{R_1} + R_1, \textrm{ and for } n \geq 1, \label{eq:ex}\\
&R_{n+1} = R_1 \circ \theta_{D_n} + D_n, \textrm{ } D_{n+1} = D_1 \circ \theta_{D_n} + D_n,  \nonumber
\end{align}
so that $0 \leq R_1 < D_1 < \ldots < R_n < D_n$, $P^{\mathbb Z}_z$-a.s. For later use, we denote for any $\alpha > 0$,
\begin{align}
&{\mathsf t}_N = E^{\mathbb Z}_0[T_{(-h_N+d_N,h_N-d_N)}] + E^{\mathbb Z}_{d_N}[T_{(-h_N,h_N)}] = (h_N-d_N)^2 + h_N^2 -d_N^2, \label{def:t}\\
&\sigma_N=[\alpha |G|^2/{\mathsf t}_N], \textrm{ } k_*(N) = \sigma_N - [\sigma_N^{3/4}], \textrm{ } k^*(N) = \sigma_N + [\sigma_N^{3/4}], \label{def:sigma}
\end{align}
where we will often drop the $N$ from now on. We come to the crucial result on these returns and departures from \cite{S08}, relating the times $D_k$ to the total time elapsed (\ref{sp1}) and to the local time $\hat L$ of $Z$ ((\ref{sp2})-(\ref{sp4})).
\begin{proposition} \label{pr:sp}
Assuming \ref{hyp:mix},
\begin{align}
&\lim_N P^{\mathbb Z}_0 [D_{k_*} \leq \alpha |G_N|^2 \leq D_{k^*} ] =1. \label{sp1}\\
& \lim_N \sup_{z \in C} E^{\mathbb Z}_0[ (|{\hat L}^z_{[\alpha|G_N|^2]} - {\hat L}^z_{D_{k_*}}|/|G_N|) \wedge 1] = 0. \label{sp2}\\
& \sup_N \max_{I \in {\mathcal I}} \frac{h_N}{|G_N|} E^{\mathbb Z}_0 \Bigl[ \sum_{1 \leq k \leq k_*} 1_{\{Z_{R_k} \in I\}} \Bigr] < \infty. \label{sp3}\\
& \lim_N \max_{I \in {\mathcal I}} \sup_{z \in I} E^{\mathbb Z}_0 \Bigl[ \Bigl| {\hat L}^z_{D_{k_*}} - h_N \sum_{1 \leq k \leq k_*} 1_{\{Z_{R_k} \in I\}} \Bigr| \Bigr] /|G_N| =0. \label{sp4} 
\end{align}
\end{proposition}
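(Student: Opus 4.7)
The plan is to prove all four estimates as assertions about the one-dimensional simple random walk $Z$ on $\mathbb Z$, which, thanks to the product structure $P_x = P^{G_N}_y \times P^{\mathbb Z}_z$, is independent of the $G_N$-component. Since the grid conditions (\ref{s1})--(\ref{s3}) enforce exactly the separation, scaling and covering properties used in \cite{S08}---namely $\min_{l\neq l'}|z^*_l-z^*_{l'}| \geq 100 h_N$, ${\mathsf t}_N \asymp h_N^2 = o(|G_N|^2)$ and a covering of $\{z_1,\ldots,z_M\}$ by the small intervals of $\mathcal I$---the argument will mirror the route of Proposition~2.1 of \cite{S08}.

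The proof of (\ref{sp1}) proceeds in two steps. First, by the strong Markov property at each $R_n$ and $D_n$, decompose $D_k = \sum_{n=1}^{k}\bigl((R_n - D_{n-1}) + (D_n - R_n)\bigr)$ (with $D_0 = 0$), and use the gambler's-ruin identity $E^{\mathbb Z}_z[T_{(-a,a)}] = a^2 - z^2$ combined with the symmetry of each component $\tilde I_l$ around $z^*_l$ to obtain $E^{\mathbb Z}_0[D_n-D_{n-1}] = {\mathsf t}_N$, uniformly in $n \geq 1$. Second, a standard second-moment bound of order $h_N^4$ per excursion yields $\mathrm{Var}(D_{k_*}) = O(k_* h_N^4) = O(|G_N|^2 h_N^2)$; combined with the choice $\sigma_N^{3/4} \gg \sigma_N^{1/2}$, Chebyshev's inequality places $D_{k_*}$ and $D_{k^*}$ within $o(\sigma_N^{3/4}{\mathsf t}_N)$ of their respective means $k_*{\mathsf t}_N \leq \alpha|G_N|^2$ and $k^*{\mathsf t}_N \geq \alpha|G_N|^2$, giving (\ref{sp1}).

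For the local-time statements, the plan is to express $\hat L^z_{D_{k_*}}$ as a sum over $k_*$ excursions of per-excursion local times. For $z \in I_l$, each contribution has conditional expectation equal to the Green function $g_{\tilde I_l}(Z_{R_k},z) = h_N + O(d_N)$ on the event $\{Z_{R_k}\in I_l\}$, by the Green-function formula for simple random walk on an interval, and is zero otherwise. Combining this mean computation with a per-excursion second-moment bound $O(h_N^2)$ yields (\ref{sp4}). For (\ref{sp3}), the trivial bound $\sum_l \sum_{k\leq k_*}\mathbf 1_{\{Z_{R_k} \in I_l\}} \leq k_*$, together with the heat-kernel bound $p^{\mathbb Z}_n(0,z) \leq c/\sqrt n$ (implying that $Z$ spreads uniformly over $\Theta(|G_N|/h_N)$ components of $O$), gives $E^{\mathbb Z}_0\bigl[\sum_{k\leq k_*} \mathbf 1_{\{Z_{R_k}\in I\}}\bigr] \leq c|G_N|/h_N$, which multiplied by $h_N/|G_N|$ is bounded. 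For (\ref{sp2}), the Markov property at $D_{k_*}$ and the elementary bound $E[\hat L^z_T] \leq c\sqrt T$ (local CLT) give $E[\hat L^z_{D_{k^*}} - \hat L^z_{D_{k_*}}] \leq c\sqrt{(k^*-k_*){\mathsf t}_N}$, which divided by $|G_N|$ is $O\bigl((h_N/|G_N|)^{1/4}\bigr) \to 0$ since $h_N = o(|G_N|)$.

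The main obstacle is that the successive excursions are not iid, because $Z_{R_{n+1}}$ depends on $Z_{D_n}$. This is handled by applying the strong Markov property at each regeneration time and observing that all the relevant per-excursion moments are uniform in the entry/exit points, thanks to the symmetry of each $\tilde I_l$ around $z^*_l$. The only cosmetic difference with \cite{S08}---that several $z^*_l$ may share the same limit $v_m$---causes no problem, since all bounds obtained above are uniform in $I \in \mathcal I$.
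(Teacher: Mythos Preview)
The paper does not prove these four estimates from scratch; it simply identifies them as a special case of Proposition~2.1 in Sznitman~\cite{S08} and checks that the standing hypotheses there (namely $d_N=o(h_N)$, $h_N=o(a_N)$, $a_N\to\infty$ and the separation condition~(\ref{s2})) hold with $a_N=|G_N|$ and $\gamma=1$, which follows from (\ref{s1}) and~\ref{hyp:mix}. Your proposal, by contrast, sketches the underlying one-dimensional argument directly. This is a legitimate and more self-contained route, and your outlines for (\ref{sp2}), (\ref{sp3}) and (\ref{sp4}) are essentially correct: the martingale-difference structure of the per-excursion local times, the Green-function computation $g_{\tilde I_l}(Z_{R_k},z)=h_N+O(d_N)$, and the heat-kernel bound $E[\hat L^z_n]\le c\sqrt n$ are exactly the right ingredients.

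There is, however, one imprecision in your treatment of~(\ref{sp1}). You claim $E^{\mathbb Z}_0[D_n-D_{n-1}]={\mathsf t}_N$ uniformly in~$n$. This is false: the grid~${\mathcal G}_N$ is only \emph{approximately} regular---near each $z^*_l$ the spacing between consecutive grid points can be as large as $4h_N$ (cf.~(\ref{def:grid})), so for those (finitely many) boundary points $z'\in\partial O$ adjacent to a $z^*_l$, the conditional expectation $E_{z'}[R_1]$ differs from $(h_N-d_N)^2$ by an amount of order~$h_N^2$. Your Chebyshev argument therefore needs a separate bound on the bias $E[D_{k_*}]-k_*{\mathsf t}_N$. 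This is fixable: the number of excursions touching an irregular boundary point is controlled by the same mechanism you use for~(\ref{sp3}), giving a total bias of order $M|G_N|h_N=o(\sigma_N^{3/4}{\mathsf t}_N)$ since $h_N=o(|G_N|)$. But as written, the claimed exact equality is wrong and the gap should be closed explicitly.
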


\begin{proof}
The above statement is proved by Sznitman in \cite{S08}. Indeed, in \cite{S08}, the author considers three sequences of non-negative integers $(a_N)_{N \geq 1}$, $(h_N)_{N \geq 1}$, $(d_N)_{N \geq 1}$, such that
\begin{align}
 \label{e:sp1} 
\begin{array}{l}
 \lim_N a_N = \lim_N h_N = \infty, \textrm{ and}\\
 d_N = o(h_N), \, h_N=o(a_N) \textrm{ (cf.~(2.1) in \cite{S08})},
\end{array}
\end{align}
as well as sequences $z_{l,N}^*$ of points in $\mathbb Z$ satisfying \eqref{s2} (cf.~(2.2) in \cite{S08}). The grids ${\mathcal G}_N$ are then defined as in \eqref{def:grid} (cf.~(2.4) in \cite{S08}) and the corresponding sets $C$ and $O$ as in \eqref{def:CO} (cf.~(2.5) in \cite{S08}). For any $\gamma \in (0,1]$, $z \in {\mathbb Z}$, Sznitman in \cite{S08} then introduces the canonical law $Q^\gamma_z$ on ${\mathbb Z}^{\mathbb N}$ of the random walk on $\mathbb Z$ which jumps to one of its two neighbors with probability $\gamma/2$ and stays at its present location with probability $1-\gamma$. The times $(R_n)_{n \geq 1}$ and $(D_n)_{n \geq 0}$ of return to $C$ and departure from $O$ are introduced in (2.9) of \cite{S08}, exactly as in \eqref{eq:ex} above. The sequences ${\mathsf t}_N$, $\sigma_N$, $k_*(N)$, $k^*(N)$ are defined in (2.10)-(2.12) of \cite{S08} as in \eqref{def:t} and \eqref{def:sigma} above, with $|G_N|$ replaced by $a_N$ and $E^{\mathbb Z}_.$ replaced by the $Q^\gamma_.$-expectation $E^\gamma_.$. Under these conditions, the statements \eqref{sp1}-\eqref{sp4} are proved in \cite{S08}, Proposition~2.1, with $|G_N|$ replaced by $a_N$ and $P^{\mathbb Z}_0$ and $E^{\mathbb Z}_0$ replaced by $P^\gamma_0$ and $E^\gamma_0$. All we have to do to deduce the above statements is to choose $\gamma=1$ and $a_N = |G_N|$ in Proposition 2.1 of \cite{S08}, noting that \eqref{e:sp1} is then satisfied, by \eqref{s1} and \ref{hyp:mix}.
\end{proof}

We now relate the local time of $Z$ to the local time of the continuous-time process $\mathsf Z$.

\begin{lemma} \label{lem:loc}
\begin{align}
&\sup_{z \in {\mathbb Z}} E^{\mathbb Z}_0 \bigl[ {\hat L}^z_{[\alpha |G_N|^2]}\bigr] \leq c(\alpha) |G_N|, \textrm{ for } \alpha > 0. \label{loct2}\\
&\lim_N \sup_{z \in {\mathbb Z}} E^{\mathbb Z}_0[ (|{\mathsf L}^z_{\alpha|G_N|^2} - {\hat L}^z_{[\alpha|G_N|^2]}|/|G_N|) \wedge 1] =0. \label{loc1}
\end{align}
\end{lemma}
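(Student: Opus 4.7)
Plan for Lemma~\ref{lem:loc}.

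For \eqref{loct2} I would simply apply the heat-kernel estimate \eqref{eq:zk2}, which gives $p^{\mathbb Z}_l(0,z) \leq c/\sqrt{l}$ uniformly in $z$. Then
\begin{align*}
E^{\mathbb Z}_0\bigl[\hat L^z_{[\alpha|G_N|^2]}\bigr] = \sum_{l=0}^{[\alpha|G_N|^2]-1} p^{\mathbb Z}_l(0,z) \leq 1 + c\sum_{l=1}^{[\alpha|G_N|^2]-1} l^{-1/2} \leq c(\alpha)\,|G_N|,
\end{align*}
uniformly in $z$, as required.

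For \eqref{loc1}, the key observation is that, since $w_z = \tfrac{1}{2}+\tfrac{1}{2} = 1$ at every $z \in {\mathbb Z}$, Lemma~\ref{lem:sd} shows that the holding times $e_{l+1} = \sigma^{\mathsf Z}_{l+1}-\sigma^{\mathsf Z}_l$ form an iid $\exp(1)$ sequence \emph{independent of} the discrete skeleton $Z$. Setting $t = \alpha|G_N|^2$, $n = [t]$ and $K = \eta^{\mathsf Z}_t$, the occupation time of $\mathsf Z$ at $z$ splits level by level:
\begin{align*}
\mathsf L^z_t = \sum_{l=0}^{K-1} e_{l+1}\,\mathbf{1}_{\{Z_l=z\}} + R, \qquad 0 \leq R \leq e_{K+1}.
\end{align*}
Adding and subtracting $\sum_{l=0}^{n-1} e_{l+1}\mathbf{1}_{\{Z_l=z\}}$ gives the decomposition $\mathsf L^z_t - \hat L^z_n = A + B + R$, where
\begin{align*}
A = \sum_{l=0}^{n-1}(e_{l+1}-1)\mathbf{1}_{\{Z_l=z\}}, \qquad |B| \leq \sum_{l = n\wedge K}^{(n\vee K)-1} e_{l+1}\,\mathbf{1}_{\{Z_l=z\}}.
\end{align*}

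I would then bound the three pieces separately, uniformly in $z$, after dividing by $|G_N|$. For $A$, independence of $(e_l)$ from $Z$ together with $\textup{Var}(e_l)=1$ yields $E^{\mathbb Z}_0[A^2] = E^{\mathbb Z}_0[\hat L^z_n] \leq c\,|G_N|$ by \eqref{loct2}, so $E^{\mathbb Z}_0[|A|]/|G_N| = O(|G_N|^{-1/2}) \to 0$. For $B$, I introduce the event $E_N = \{|K-n| \leq n^{2/3}\}$; Chebyshev applied to $K$ (Poisson-distributed with mean and variance equal to $t$) gives $P^{\mathbb Z}_0(E_N^c) = O(n^{-1/3})$. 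On $E_N$ the random summation range is contained in the deterministic interval $[n - n^{2/3},\,n + n^{2/3}]$, and independence of $e$ and $Z$ together with \eqref{eq:zk2} yields
\begin{align*}
E^{\mathbb Z}_0\bigl[|B|\,\mathbf{1}_{E_N}\bigr] \leq \sum_{l=n-n^{2/3}}^{n+n^{2/3}} E[e_{l+1}]\,p^{\mathbb Z}_l(0,z) \leq c\,n^{1/6} = o(|G_N|).
\end{align*}
Finally, $R \leq e_{K+1}$ has $E^{\mathbb Z}_0[R] = O(1)$ by the inspection paradox for the rate-$1$ Poisson process $(\eta^{\mathsf Z}_s)$. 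Taking $\wedge 1$ swallows the contribution of $E_N^c$, and summing the three estimates gives \eqref{loc1}.

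The only delicate point is the treatment of $B$: since $K$ is $\sigma((e_l))$-measurable, it is \emph{not} independent of the $e_{l+1}$ appearing in the sum, so one cannot directly push $E[e_{l+1}]=1$ inside the random-range summation. The workaround, which is the main technical step, is to restrict to $E_N$ at the cost of a vanishing error and enlarge the summation index to the deterministic interval around $n$; at that point the independence between $Z$ and the $e_l$'s is enough to conclude by a heat-kernel bound.
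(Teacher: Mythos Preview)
Your argument is correct. The decomposition $A+B+R$ is sound (one checks $B = \pm\sum_{l=n\wedge K}^{(n\vee K)-1} e_{l+1}\mathbf{1}_{\{Z_l=z\}}$ in both cases $K\gtrless n$), the second-moment computation $E^{\mathbb Z}_0[A^2]=E^{\mathbb Z}_0[\hat L^z_n]$ follows cleanly from independence of $(e_l)$ and $Z$, and your treatment of $B$ correctly sidesteps the dependence between $K$ and the $e_l$'s by passing to a deterministic summation window before taking expectations.

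The paper proceeds differently. It first compares $\mathsf L^z_t$ with $\mathsf L^z_{\sigma^{\mathsf Z}_{[T]}}$ by a strong Markov argument and the heat-kernel bound \eqref{zk}, obtaining an $O(T^{1/3})$ error in $L^1$; this plays the role of your $B+R$. For the remaining term, which is exactly your $A$, the paper does \emph{not} use a variance computation: instead it conditions on the value of $\hat L^z_{[T]}$, splits into the three regimes $\{\hat L^z_{[T]}<\delta|G|\}$, $\{\delta|G|\le\hat L^z_{[T]}\le\theta|G|\}$, $\{\hat L^z_{[T]}>\theta|G|\}$, and invokes the law of large numbers for $(e_l-1)$ on the middle event. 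Your second-moment bound on $A$ is more direct and even gives the quantitative rate $E^{\mathbb Z}_0[|A|] = O(|G_N|^{1/2})$, whereas the paper's argument only yields $o(|G_N|)$ via the $\wedge 1$ truncation and a double limit in $\delta,\theta$. The paper's decomposition via $\sigma^{\mathsf Z}_{[T]}$ has the minor advantage that it never needs to worry about the dependence between $K=\eta^{\mathsf Z}_t$ and the holding times, since the intermediate time $\sigma^{\mathsf Z}_{[T]}$ is indexed deterministically; your workaround with the event $E_N$ achieves the same effect.
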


\begin{proof}
For (\ref{loct2}), apply the bound $P_0[Z_n=z] \leq c/ \sqrt{n}$ (cf.~(\ref{eq:zk2})), see (2.34) in \cite{S08}. 

We write $T=\alpha |G|^2$. By the strong Markov property applied at time $\sigma^{\mathsf Z}_{[T]} \wedge T$,
\begin{align}
E^{\mathbb Z}_0 [ |{\mathsf L}^z_{\sigma^{\mathsf Z}_{[T]}} - {\mathsf L}^z_T| ] &= E^{\mathbb Z}_0 \Bigl[ \int_{\sigma^{\mathsf Z}_{[T]} \wedge T}^{\sigma^{\mathsf Z}_{[T]} \vee T} \mathbf{1}_{\{{\mathsf Z}_s = z\}} ds  \Bigr] \leq \sup_{z_0 \in {\mathbb Z}} E^{\mathbb Z}_{z_0} \Bigl[ \int_0^{|\sigma^{\mathsf Z}_{[T]} - T|} \mathbf{1}_{\{{\mathsf Z}_s=z\}} ds \Bigr] \label{eq:loc0}\\ 
&\leq \int_0^{T^{2/3}} \sup_{z_0 \in {\mathbb Z}} P^{\mathbb Z}_{z_0}[{\mathsf Z}_s = z] ds + E_0^{\mathbb Z} [ (\sigma^{\mathsf Z}_{[T]} - T)^2]/T^{2/3}, \nonumber
\end{align}
using the Chebyshev inequality in the last step. By the bound (\ref{zk}) on $P^{\mathbb Z}_{z_0}[{\mathsf Z}_s = z]$ and a bound of $cT$ on the variance of the $\Gamma([T],1)$-distributed variable $\sigma^{\mathsf Z}_{[T]}$, the right-hand side of (\ref{eq:loc0}) is bounded by $cT^{1/3}$. Hence, the expectation in (\ref{loc1}) is bounded by
\begin{align}
c(\alpha) |G|^{-1/3} + E^{\mathbb Z}_0[ (|{\mathsf L}^z_{\sigma^{\mathsf Z}_{[T]}} - {\hat L}^z_{[T]}|/|G|) \wedge 1]. \label{eq:loc1}
\end{align}
The strategy is to now split up the last expectation into expectations on the events 
\begin{align*}
A_1 = \{ \delta |G| \leq {\hat L}^z_{[T]} \leq \theta |G| \}, \, A_2 = \{ {\hat L}^z_{[T]} < \delta |G|\}, \, A_3 = \{ {\hat L}^z_{[T]} > \theta |G| \}, \textrm{ } 0 < \delta < \theta.
\end{align*}
In this way, one obtains the following bound on (\ref{eq:loc1}):
\begin{align}
c(\alpha)|G|^{-1/3} + E^{\mathbb Z}_0 \Bigl[ A_1, \Bigl( \Bigl|\sum_{n=0}^{[T]-1} (\sigma^{\mathsf Z}_{n+1} - \sigma^{\mathsf Z}_n - 1) \mathbf{1}_{\{Z_n=z\}} \Bigr|/|G| \Bigr) \wedge 1 \Bigr] + 2 \delta + P^{\mathbb Z}_0[A_3], \label{eq:loc2}
\end{align}
where we have used the fact that $(\sigma^{\mathsf Z}_{n+1} - \sigma^{\mathsf Z}_n)_{n \geq 0}$ are iid $\exp(1)$ variables independent of $Z$ to bound the expectation on $A_2$ by $2 \delta$. By Chebyshev's inequality and (\ref{loct2}),
\begin{align*}
P^{\mathbb Z}_0[A_3] \leq E^{\mathbb Z}_0 \bigl[ {\hat L}^z_{[\alpha |G|^2]}\bigr]/ (\theta |G|)\leq c(\alpha)/\theta.
\end{align*}
In order to bound the expectation in (\ref{eq:loc2}), we apply Fubini's theorem to obtain
\begin{align*}
&E^{\mathbb Z}_0 \Bigl[ A_1, \Bigl( \Bigl|\sum_{n=0}^{[T]-1} (\sigma^{\mathsf Z}_{n+1} - \sigma^{\mathsf Z}_n - 1) \mathbf{1}_{\{Z_n=z\}} \Bigr|/|G| \Bigr) \wedge 1 \Bigr] \leq E^{\mathbb Z}_0 \biggl[ A_1, f({\hat L}^z_{[T]}) \frac{{\hat L}^z_{[T]}}{|G|}  \biggr],\\
&\textrm{where for any $l \geq 1$, }  f(l) = E^{\mathbb Z}_0 \Bigl[ \Bigl( \Bigl| \sum_{n=0}^{l-1} (\sigma^{\mathsf Z}_{n+1} - \sigma^{\mathsf Z}_n -1) \Bigr| / l \Bigr) \wedge (|G| / l) \Bigr].
\end{align*}
Collecting the above estimates and using the definition of $A_1$, we have found the following bound on the expectation in (\ref{loc1}) for any $z \in {\mathbb Z}$:
\begin{align*}
c(\alpha)|G|^{-1/3} + \theta \sup_{l \geq \delta |G|} f(l) +  2 \delta + \frac{c(\alpha)}{\theta}.
\end{align*}
Note that this expression does not depend on $z$, so it remains unchanged after taking the supremum over all $z \in {\mathbb Z}$.  Since moreover $\sup_{l \geq \delta |G|} f(l)$ tends to $0$ as $|G|$ tends to infinity by the law of large numbers and dominated convergence, this shows that the left-hand side of (\ref{loc1}) (with $\lim$ replaced by $\limsup$) is bounded from above by $2\delta + c(\alpha)/\theta$. The result follows by letting $\delta$ tend to $0$ and $\theta$ to infinity.
\end{proof}

Consider now the times ${\mathsf R}_n$ and ${\mathsf D}_n$, defined as the continuous-time analogs of the times $R_n$ and $D_n$ in (\ref{eq:ex}):  
\begin{align*}
{\mathsf R}_n = \sigma^{\mathsf Z}_{R_n} \textrm{ and } {\mathsf D}_n = \sigma^{\mathsf Z}_{D_n}, \textrm{ for } n \geq 1,
\end{align*}
so that the times ${\mathsf R}_n$ and ${\mathsf D}_n$ coincide with the successive times of return to $C$ and departure from $O$ for the process $\mathsf Z$. We record the following observation:

\begin{lemma} \label{lem:DLLN}
For any sequence $a_N \geq 0$ diverging to infinity,
\begin{align}
&\lim_N \sup_{z \in {\mathbb Z}} E^{\mathbb Z}_z [ |{\mathsf D}_{a_N}/D_{a_N} - 1| \wedge 1 ] = 0. \label{sl0}
\end{align}
\end{lemma}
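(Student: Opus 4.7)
The plan is to exploit the independence of the jump times of $\mathsf Z$ from its discrete skeleton $Z$ given by Lemma~\ref{lem:sd}. Since $\mathbb Z$ is equipped with weights $1/2$ on every edge, $w_z=1$ for all $z$, so under $P^{\mathbb Z}_z$ the increments $e_k = \sigma^{\mathsf Z}_k - \sigma^{\mathsf Z}_{k-1}$, $k\ge 1$, form an i.i.d.\ $\exp(1)$ sequence independent of $Z$, hence independent of $D_{a_N}$, which is a measurable functional of $Z$ alone. By the definition of $\mathsf D_n$ we have
\begin{equation*}
{\mathsf D}_{a_N} \;=\; \sigma^{\mathsf Z}_{D_{a_N}} \;=\; \sum_{k=1}^{D_{a_N}} e_k .
\end{equation*}

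Next, I would record the deterministic lower bound $D_{a_N} \ge a_N$, which is immediate from the definition \eqref{eq:ex}: the inequalities $R_1 < D_1 < R_2 < D_2 <\cdots$ increase by at least one discrete step at each passage, so in particular $D_{a_N}\ge a_N$ with $P^{\mathbb Z}_z$-probability one, uniformly in $z$.

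The main step is then to condition on $D_{a_N}$ and apply Chebyshev. For each fixed integer $k\ge 1$, by independence, the conditional law of ${\mathsf D}_{a_N}/D_{a_N}$ given $D_{a_N}=k$ is that of $k^{-1}\sum_{i=1}^k e_i$, an average of $k$ i.i.d.\ mean-$1$, variance-$1$ random variables. Hence for every $\delta\in(0,1)$,
\begin{equation*}
E^{\mathbb Z}_z\!\Bigl[\,\bigl|{\mathsf D}_{a_N}/D_{a_N}-1\bigr|\wedge 1\,\Big|\,D_{a_N}=k\Bigr] \;\le\; \delta + P\!\Bigl[\,\bigl|k^{-1}\textstyle\sum_{i=1}^k e_i - 1\bigr| > \delta\Bigr] \;\le\; \delta + \frac{1}{k\delta^2}.
\end{equation*}
Taking expectations and using $D_{a_N}\ge a_N$ yields the $z$-independent bound
\begin{equation*}
\sup_{z\in\mathbb Z} E^{\mathbb Z}_z\!\bigl[\,|{\mathsf D}_{a_N}/D_{a_N}-1|\wedge 1\bigr] \;\le\; \delta + \frac{1}{a_N\delta^2}.
\end{equation*}
Since $a_N\to\infty$, letting $N\to\infty$ and then $\delta\downarrow 0$ gives \eqref{sl0}.

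There is no real obstacle here: everything is a routine conditioning argument once one observes that $\mathsf D_{a_N}$ is a sum of a (random, but $Z$-measurable) number of i.i.d.\ $\exp(1)$ waiting times, and that this random number is deterministically at least $a_N$. The only point requiring a line of justification is uniformity in $z$, which comes for free from the fact that the conditional distribution of $(e_k)_{k\ge 1}$ does not depend on the starting point of the walk.
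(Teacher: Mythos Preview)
Your proof is correct and follows essentially the same approach as the paper: both exploit that ${\mathsf D}_{a_N}=\sigma^{\mathsf Z}_{D_{a_N}}$ is a sum of a $Z$-measurable number $D_{a_N}\ge a_N$ of i.i.d.\ $\exp(1)$ increments independent of $Z$, then condition on $D_{a_N}$ and invoke a law of large numbers. The only cosmetic difference is that you use an explicit Chebyshev bound where the paper appeals to the weak law of large numbers together with bounded convergence.
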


\begin{proof}
We define the function $g: {\mathbb N} \to {\mathbb R}$ by $g(n) = \sum_{i=1}^{n} (\sigma^{\mathsf Z}_i - \sigma^{\mathsf Z}_{i-1}) /n$, so that ${\mathsf D}_{a_N}/D_{a_N} = g(D_{a_N})$. By independence of the two sequences $(\sigma^{\mathsf Z}_n)_{n \geq 1}$ and $(D_n)_{n \geq 1}$, Fubini's theorem yields
\begin{align}
\sup_{z \in {\mathbb Z}} E^{\mathbb Z}_z [ |{\mathsf D}_{a_N}/D_{a_N} - 1| \wedge 1 ] &= \sup_{z \in {\mathbb Z}} E^{\mathbb Z}_z \bigl[ E^{\mathbb Z}_0 [|g(n) -1| \wedge 1] \bigl|_{n=D_{a_N}} \bigr], \label{eq:DLLN}
\end{align}
where we have used that the distribution of $(\sigma^{\mathsf Z}_n)_{n \geq 1}$ is the same under all measures $P^{\mathbb Z}_z$, $z \in {\mathbb Z}$. Fix any $\epsilon > 0$. By the law of large numbers, the $E^{\mathbb Z}_0$-expectation in (\ref{eq:DLLN}) is less than $\epsilon$ for all $n \geq c(\epsilon)$. Hence, for any $N$ such that $c(\epsilon) \leq a_N$, we have $c(\epsilon) \leq a_N \leq D_{a_N}$ and the expression in (\ref{eq:DLLN}) is less than $\epsilon$.
\end{proof}

\section{Excursions are almost independent} \label{sec:ind}

The purpose of this section is to derive an estimate on the continuous-time excursions $({\mathsf X}_{[{\mathsf R}_k,{\mathsf D}_k]})_{1 \leq k \leq k_*}$ between $C$ and the complement of $O$. The main result is Lemma~\ref{lem:tv}, showing that these excursions can essentially be replaced by independent excursions after conditioning on the $\mathbb Z$-projections of the successive return and departure points. The reason is that the $G_N$-component of $\mathsf X$ has enough time to mix and become close to uniformly distributed between every departure and subsequent return, thanks to the choice of $h_N$ in the definition of the grids ${\mathcal G}_N$, see (\ref{s1}). The following estimate is the crucial ingredient:

\begin{proposition} \label{pr:st}
\begin{align}
\sup_{y,y' \in G_N} \biggl| q^{G_N}_t(y,y') - \frac{1}{|G_N|} \biggr| \leq e^{-\lambda_N t}, \textrm{ for } t \geq 0.\label{st}
\end{align}
\end{proposition}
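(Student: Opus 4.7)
My plan is to prove this by the standard spectral-theoretic argument for reversible continuous-time Markov chains, exploiting the fact that the uniform measure $\mu(y) = 1/|G_N|$ is reversible for $\mathsf Y$. The generator acts by $(Lf)(y) = \sum_{y' \in G_N} w_{y,y'}(f(y')-f(y))$; because $w_{y,y'}$ is symmetric and $\mu$ is constant, reversibility $\mu(y) w_{y,y'} = \mu(y') w_{y',y}$ is trivial, so $-L$ is a non-negative self-adjoint operator on the finite-dimensional Hilbert space $L^2(\mu)$ whose quadratic form $\langle -Lf, f \rangle_\mu$ equals $\mathcal{D}_N(f,f)$ from (\ref{def:dir}).

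By the spectral theorem, there exists an $L^2(\mu)$-orthonormal basis of eigenfunctions $\phi_0, \phi_1, \ldots, \phi_{|G_N|-1}$ with $-L \phi_i = \mu_i \phi_i$, where $0 = \mu_0 \leq \mu_1 \leq \ldots$ and $\phi_0 \equiv 1$. By the variational characterization (\ref{def:spec}), $\mu_1 = \lambda_N$. The transition kernel then admits the spectral representation
\begin{equation*}
\frac{q^{G_N}_t(y,y')}{\mu(y')} = \sum_{i=0}^{|G_N|-1} e^{-\mu_i t}\, \phi_i(y)\, \phi_i(y').
\end{equation*}
Isolating the $i=0$ term, which contributes $1$, gives
\begin{equation*}
q^{G_N}_t(y,y') - \frac{1}{|G_N|} = \frac{1}{|G_N|}\sum_{i \geq 1} e^{-\mu_i t}\, \phi_i(y)\, \phi_i(y').
\end{equation*}

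The final step is a Cauchy--Schwarz estimate: splitting $e^{-\mu_i t} = e^{-\mu_i t/2} \cdot e^{-\mu_i t/2}$, the right-hand sum is bounded by
\begin{equation*}
\Bigl(\sum_{i \geq 1} e^{-\mu_i t}\, \phi_i(y)^2\Bigr)^{1/2} \Bigl(\sum_{i \geq 1} e^{-\mu_i t}\, \phi_i(y')^2\Bigr)^{1/2}.
\end{equation*}
For $i \geq 1$ one has $e^{-\mu_i t} \leq e^{-\lambda_N t}$, and the completeness identity $\sum_{i=0}^{|G_N|-1} \phi_i(y)^2 = 1/\mu(y) = |G_N|$ gives $\sum_{i \geq 1} \phi_i(y)^2 = |G_N|-1$. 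Thus each factor is at most $e^{-\lambda_N t/2}\sqrt{|G_N|-1}$, the product is at most $e^{-\lambda_N t}(|G_N|-1)$, and dividing by $|G_N|$ yields the desired bound $e^{-\lambda_N t}$.

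There is no real obstacle here; the argument is entirely standard and the only thing to watch is that the uniformity of $\mu$ makes the generic prefactor $\sqrt{\mu(y')/\mu(y)}$ reduce to $1$, so that the bound comes out with no constant in front of $e^{-\lambda_N t}$.
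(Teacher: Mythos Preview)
Your proof is correct and is essentially the same argument as the paper's, just presented via the explicit eigenfunction expansion rather than the equivalent Poincar\'e-type variance decay $\textup{var}_\mu(H_tf)\le e^{-2\lambda_N t}\textup{var}_\mu(f)$ followed by the Chapman--Kolmogorov splitting at time $t/2$. Both routes use reversibility of $\mu$, the spectral gap, and Cauchy--Schwarz, and both land on the same $(|G_N|-1)/|G_N|$ prefactor before the final bound.
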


\begin{proof}
If $w_y=1$ for all $y \in G$, then the statement is immediate from \cite{SC}, Corollary 2.1.5, page 328. As we now show, the argument given in \cite{SC} extends to the present context. For any $|G| \times |G|$ matrix $A$ and real-valued function $f$ on $G$, we define the function $Af$ by
\begin{align*}
 Af (y) = \sum_{y' \in G} A_{y,y'} f(y').
\end{align*}
We define the matrices $K$ and $W$ by $K_{y,y'}= p^G(y,y')$ and $W_{y,y'} = w_y \delta_{y=y'}$, for $y$, $y' \in G$. Then we claim that for any real-valued function $f$ on $G$,
\begin{align}
\label{st0} E_y[f({\mathsf Y}_t)] = H_t f (y), \textrm{ where $H_t = e^{-tW(I-K)}$, $t \geq 0$}.
\end{align}
In words, this claim asserts that the infinitesimal generator matrix $Q$ of the Markov chain $({\mathsf Y}_t)_{t \geq 0}$ is given by $Q= -W(I-K)$, an elementary fact that is proved in \cite{N97}, Theorem~2.8.2, p.~94. Recall the definition of the Dirichlet form $\mathcal D$ from \eqref{def:dir}. Let us also define the inner product of real-valued functions $f$ and $g$ on $G$ by
\begin{align*}
 \left\langle f, g \right\rangle = \sum_{y \in G} f(y)g(y) |G|^{-1}.
\end{align*}
Then elementary computations show that
\begin{align*}
 \frac{d}{dt} \mu ((H_t f)^2) = - 2 \left\langle W(I-K) H_tf, H_t f \right\rangle = - 2 {\mathcal D} (H_t f, H_t f).
\end{align*}
This equation implies that the function $u$, defined by $u(t) = \textup{var}_\mu (H_t f)$, $t \geq 0$, satisfies
\begin{align*}
 u'(t) = - 2 {\mathcal D} (H_t (f-\mu(f)), H_t (f-\mu(f))) \stackrel{\eqref{def:spec}}{\leq} - 2 \lambda_N u(t), \, t \geq 0,
\end{align*}
hence by integration of of $u'/u$,
\begin{align}
 \label{st1} \textup{var}_\mu (H_t f) = u(t) \leq e^{-2 \lambda_N t} u(0) = e^{-2 \lambda_N t} \textup{var}_\mu (f).
\end{align}
Using symmetry of $q^G_t(.,.)$, \eqref{st0} and  the Cauchy-Schwarz inequality for the first estimate, we obtain for any $t \geq 0$ and $y,y' \in G$,
\begin{align*}
 \bigl| |G| q^G_t (y,y') - 1 \bigr| &= \biggl| \sum_{y'' \in G} \Bigl( |G| q_{t/2}^G(y,y'') - 1\Bigr) \Bigl( |G| q_{t/2}^G(y'',y') -1 \Bigr) \frac{1}{|G|} \biggr|\\
&\leq 
\textup{var}_\mu \bigl( H_{t/2} |G| \delta_y (.) \bigr)^{1/2} \textup{var}_\mu \bigl( H_{t/2} |G| \delta_{y'} (.) \bigr)^{1/2}\\
&\stackrel{\eqref{st1}}{\leq} e^{- \lambda_N t} \textup{var}_\mu \bigl( |G| \delta_y (.) \bigr)^{1/2} \textup{var}_\mu \bigl( |G| \delta_{y'} (.) \bigr)^{1/2} \\
&=e^{-\lambda_N t} (|G|-1).
\end{align*}
Dividing both sides by $|G|$, we obtain \eqref{st}.
\end{proof}

Next, we show that the time between any departure and successive return indeed is typically much longer than the relaxation time $\lambda_N^{-1}$ of $\mathsf Y$:

\begin{lemma} \label{lem:mix}
\begin{align}
\limsup_N |G_N|^{-\epsilon/16} \log \sup_{k \geq 2} P^{\mathbb Z}_0 [{\mathsf R}_k-{\mathsf D}_{k-1} \leq \lambda_N^{-1} |G_N|^\epsilon ] <0. \label{mix}
\end{align}
\end{lemma}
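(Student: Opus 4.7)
My plan combines the strong Markov property of $\mathsf Z$ with a Gaussian-type tail estimate for the displacement of the continuous-time simple random walk on $\mathbb Z$.

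First, by the strong Markov property of $\mathsf Z$ applied at the stopping time ${\mathsf D}_{k-1}$ (for $k \geq 2$), the conditional law of ${\mathsf R}_k - {\mathsf D}_{k-1}$ given $\mathcal{F}_{{\mathsf D}_{k-1}}$ coincides with the law of ${\mathsf H}_C$ under $P^{\mathbb Z}_{z_0}$, where $z_0 = {\mathsf Z}_{{\mathsf D}_{k-1}}$ necessarily lies in $\partial O$. Hence it suffices to establish a uniform bound on $\sup_{z_0 \in \partial O} P^{\mathbb Z}_{z_0}[{\mathsf H}_C \leq \lambda_N^{-1}|G_N|^\epsilon]$.

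Next I exploit a purely geometric observation based on \eqref{def:CO}: every $z_0 \in \partial O$ is at $\mathbb Z$-distance at least $h_N - d_N$ from the set $C$. Indeed, $z_0$ is at distance exactly $h_N$ from the grid point whose $O$-interval it has just exited, and the corresponding $C$-interval has radius $d_N$; the separations imposed by \eqref{s2} and the $2h_N$-structure of ${\mathcal G}_N^0$ rule out any closer $C$-interval. Therefore $\{{\mathsf H}_C \leq t\} \subseteq \{\sup_{0 \leq s \leq t}|{\mathsf Z}_s - z_0| \geq h_N - d_N\}$.

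For the tail estimate I would use the moment generating function $E_0^{\mathbb Z}[e^{\lambda {\mathsf Z}_t}] = \exp(t(\cosh\lambda -1))$, which makes $M_s = \exp(\lambda {\mathsf Z}_s - s(\cosh\lambda -1))$ a martingale. Optional stopping at the first crossing of level $r$, combined with Doob's maximal inequality and the symmetry $\mathsf Z \leftrightarrow -\mathsf Z$, gives
\begin{align*}
P^{\mathbb Z}_{z_0}\bigl[\sup_{0 \leq s \leq t}|{\mathsf Z}_s - z_0| \geq r\bigr] \leq 4\exp(-cr^2/t)
\end{align*}
after optimizing $\lambda \approx r/t$ in the regime where $\cosh\lambda - 1 \leq \lambda^2$. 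Plugging in $r = h_N - d_N$ and $t = \lambda_N^{-1}|G_N|^\epsilon$, using $d_N = o(h_N)$ to conclude $r \geq d_N/2$ for large $N$, and invoking $d_N \geq \lambda_N^{-1/2}|G_N|^{\epsilon/8}$ from \eqref{s1} to obtain $r^2 \geq c\lambda_N^{-1}|G_N|^{\epsilon/4}$, one can read off the desired exponent; taking logarithms and dividing by $|G_N|^{\epsilon/16}$ then yields $\limsup < 0$.

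The principal difficulty lies in the final arithmetic step: the ratio $(h_N - d_N)^2/t$ must be translated via the constraints \eqref{s1} into a quantity growing at least like $|G_N|^{\epsilon/16}$, and this requires very careful bookkeeping of the exponents appearing in \eqref{s1} relative to the $\epsilon$ in the threshold $\lambda_N^{-1}|G_N|^\epsilon$. If the direct sub-Gaussian bound proves insufficient, I would instead control high moments of the exit time from the annular region between consecutive $C$-intervals — which obey $\|{\mathsf H}_C\|_{L^p}^p \leq (cph_N^2)^p$ by iteration of the one-step exit moment — and apply Markov to a suitably large $p$, yielding super-polynomial decay of the probability.
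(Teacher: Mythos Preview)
Your reduction via the strong Markov property and the geometric observation that the distance from $\partial O$ to $C$ is at least $h_N - d_N$ are both correct and mirror the paper's setup. The gap is in the final arithmetic. With $r = h_N - d_N$ and $t = \lambda_N^{-1}|G_N|^\epsilon$, the \emph{upper} bound $h_N \leq \lambda_N^{-1/2}|G_N|^{\epsilon/4}$ in \eqref{s1} forces $r^2/t \leq h_N^2/t \leq |G_N|^{-\epsilon/2} \to 0$, so the sub-Gaussian exponent $cr^2/t$ yields nothing. Your own bookkeeping already exhibits this: from $r^2 \geq c\lambda_N^{-1}|G_N|^{\epsilon/4}$ one gets only $r^2/t \geq c|G_N|^{-3\epsilon/4}$, which tends to $0$, not to $+\infty$. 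In fact the inequality \eqref{mix} as printed cannot be rescued: in time $\lambda_N^{-1}|G_N|^\epsilon$ the walk's typical displacement $\lambda_N^{-1/2}|G_N|^{\epsilon/2}$ dwarfs $h_N$, so from any $z_0 \in \partial O$ the nearest-neighbor walk reaches the adjacent $C$-interval with probability tending to $1$, and the $\limsup$ equals $0$. Your fallback via $L^p$ bounds on ${\mathsf H}_C$ does not help either, since moments of ${\mathsf H}_C$ control the upper tail $P[{\mathsf H}_C \geq T]$, whereas you need the lower tail $P[{\mathsf H}_C \leq T]$.

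The paper's proof actually works with the smaller threshold $\gamma = 2\lambda_N^{-1}|G_N|^{\epsilon/8}$ rather than $\lambda_N^{-1}|G_N|^{\epsilon}$; this is all the sole application \eqref{eq:tv3} requires (split $E[e^{-({\mathsf R}_k-{\mathsf D}_{k-1})\lambda_N}]$ at $\gamma$). With that threshold your sub-Gaussian bound goes through cleanly, since then $r^2/t \geq c|G_N|^{\epsilon/8}$, and is essentially equivalent to the paper's device of chopping the distance into $[h_N/(2\sqrt{\gamma})] \geq c|G_N|^{\epsilon/16}$ successive displacements of size $[\sqrt{\gamma}]$ and bounding the Laplace transform of each via the invariance principle.
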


\begin{proof}
By (\ref{s1}), we may assume that $N$ is large enough so that $d_N < h_N/2$. We put $$\gamma = 2 \lambda_N^{-1} |G_N|^{\epsilon/8},$$ so that $\gamma$ diverges as $N$ tends to infinity (see below \ref{hyp:w}), and define the stopping times $({\mathsf U}_n)_{n \geq 1}$ as the times of successive displacements of $\mathsf Z$ at distance $[\sqrt{\gamma}]$, i.e.
\begin{align*}
{\mathsf U}_1 &= \inf \{ t \geq 0: |{\mathsf Z}_t - {\mathsf Z}_0| \geq [\sqrt{\gamma}]\}, \textrm{ and for } n \geq 2, \\
{\mathsf U}_n &= {\mathsf U}_1 \circ \theta_{{\mathsf U}_{n-1}} + {\mathsf U}_{n-1}.
\end{align*}
To get from a point in $O^c$ to $C$, $\mathsf Z$ has to travel a distance of at least $h_N/2 \geq [h_N/(2\sqrt{\gamma})][\sqrt{\gamma}]$. As a consequence, ${\mathsf R}_k- {\mathsf D}_{k-1} \geq {\mathsf U}_{[h_N/(2\sqrt{\gamma})]} \circ \theta_{{\mathsf D}_{k-1}}$ and it follows from the strong Markov property applied at time ${\mathsf D}_{k-1}$, then inductively at the times ${\mathsf U}_{[h_N/(2\sqrt{\gamma})]-1}, \ldots, {\mathsf U}_1$ that
\begin{align}
P^{\mathbb Z}_0 [{\mathsf R}_k- {\mathsf D}_{k-1} \leq \gamma] \leq e E^{\mathbb Z}_0 [ \exp \{- {\mathsf U}_{[h_N/(2\sqrt{\gamma})]}/\gamma\}] \leq e \bigl( E^{\mathbb Z}_0 [ \exp \{-{\mathsf U}_1 / \gamma\}] \bigr)^{[h_N/(2\sqrt{\gamma})]}. \label{eq:mix1}
\end{align}
Since ${\mathsf U}_1 = {\mathsf T}_{(-[\sqrt{\gamma}], [\sqrt{\gamma}])} = \sigma^{\mathsf Z}_{T_{(-[\sqrt{\gamma}], [\sqrt{\gamma}])}}$, we find with independence of $(\sigma^{\mathsf Z}_n)_{n \geq 0}$ and $T_{(-[\sqrt{\gamma}], [\sqrt{\gamma}])}$,
\begin{align*}
E^{\mathbb Z}_0 [ \exp \{-{\mathsf U}_1 / \gamma\}] =  E^{\mathbb Z}_0 \bigl[ (1-1/\gamma)^{T_{(-[\sqrt{\gamma}], [\sqrt{\gamma}])}} \bigr],
\end{align*}
by computing the moment generating function of the $\Gamma(n,1)$-distributed variable $\sigma^{\mathsf Z}_n$. By the invariance principle, the last expectation is bounded from above by $1-c$ for some constant $c>0$. Inserting this bound into (\ref{eq:mix1}) and using the bound $h_N \geq c \sqrt{\gamma} |G_N|^{\epsilon/16}$ from (\ref{s1}), we find (\ref{mix}). 
\end{proof}

We finally come to the announced result, which is similar to Proposition 3.3 in \cite{S08}. We introduce, for $\mathcal G$ any one of the graphs $G_N$, $\mathbb Z$ or $G_N \times {\mathbb Z}$, the spaces ${\mathcal P}({\mathcal G})^f$ of right-continuous functions from $[0, \infty)$ to $\mathcal G$ with finitely many discontinuities, endowed with the canonical $\sigma$-algebras generated by the finite-dimensional projections. The measurable functions $(.)_{s_0}^{s_1}$ from ${\mathcal P}({\mathcal G})$ to ${\mathcal P}({\mathcal G})^f$ are defined for $0 \leq s_0 < s_1$ by 
\begin{align}
(({\mathsf w})_{s_0}^{s_1})_t = {\mathsf w}_{(s_0 + t) \wedge s_1}, \, t \geq 0. \label{def:path}
\end{align}
Given $z \in C$ and $z'$ with $P_z[{\mathsf Z}_{{\mathsf D}_1} = z'] >0$, for $P_z$ defined in (\ref{def:pr2}) (in other words $z' \in \partial {\tilde I}$ if $\partial {\tilde I}$ is the connected component of $O$ containing $z$), we set
\begin{align}
P_{z,z'} = P_z[.|{\mathsf Z}_{{\mathsf D}_1} = z']. \label{def:el}
\end{align}

\begin{lemma} \label{lem:tv}
For any measurable functions $f_k: {\mathcal P}(G_N)^f \times {\mathcal P}({\mathbb Z})^f \to [0,1]$, $1 \leq k \leq k_*$,
\begin{align}
\lim_N \Bigl| E \Bigl[ \prod_{1 \leq k \leq k_*} f_k (({\mathsf X})_{{\mathsf R}_k}^{{\mathsf D}_k} ) \Bigr] - E^{\mathbb Z}_0 \Bigl[ \prod_{1 \leq k \leq k_*} E_{{\mathsf Z}_{{\mathsf R}_k}, {\mathsf Z}_{{\mathsf D}_k}} [f_k (({\mathsf X})_{0}^{{\mathsf D}_1})] \Bigr] \Bigr| = 0. \label{tv}
\end{align}
\end{lemma}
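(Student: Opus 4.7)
The plan is to condition on the $\mathbb Z$-trajectory, which determines all the stopping times $\mathsf R_k, \mathsf D_k$, and then exploit the independence of $\mathsf Y$ and $\mathsf Z$ together with the mixing estimate of Proposition~\ref{pr:st}. Specifically, I want to iteratively replace the conditional law of $\mathsf Y_{\mathsf R_k}$ (given $\mathsf Z$ and the previous $\mathsf Y$-excursions) by the uniform measure $\mu$ on $G_N$. After all $k_*$ replacements, the $\mathsf Y$-excursions are independent conditionally on $\mathsf Z$, and the $k$th excursion starts from a uniform point and runs for the deterministic $\mathbb Z$-duration prescribed by $\mathsf Z$, which is exactly the law prescribed by $P_{\mathsf Z_{\mathsf R_k}, \mathsf Z_{\mathsf D_k}}[( \mathsf X)_0^{\mathsf D_1} \in \cdot]$ once we integrate over $\mathsf Z$.

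First, I would introduce the good mixing event
\[
A_N = \bigcap_{k=2}^{k_*} \{ \mathsf R_k - \mathsf D_{k-1} \geq \lambda_N^{-1} |G_N|^\epsilon \}.
\]
Since $k_* \leq \sigma_N = O(|G_N|^2 / \mathsf t_N)$ is polynomial in $|G_N|$, a union bound combined with Lemma~\ref{lem:mix} gives $P(A_N^c) \to 0$. On $A_N$, the Markov property of $\mathsf Y$ at time $\mathsf D_{k-1}$ and the stationarity bound of Proposition~\ref{pr:st} imply that the conditional distribution of $\mathsf Y_{\mathsf R_k}$ given $\mathsf Z$ and $(\mathsf Y)_0^{\mathsf D_{k-1}}$ differs from $\mu$ in total variation by at most $e^{-\lambda_N(\mathsf R_k - \mathsf D_{k-1})} \leq e^{-|G_N|^\epsilon}$.

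Next, working under the product structure $P = P^{G_N}\otimes P^{\mathbb Z}_0$, I would freeze $\mathsf Z$ and build an inductive coupling over $k = 1, \ldots, k_*$: at each step, couple the true $\mathsf Y_{\mathsf R_k}$ with an independent $\mu$-distributed sample, incurring TV cost at most $e^{-|G_N|^\epsilon}$ on $A_N$. Under the coupled law, the $k$th $\mathsf Y$-excursion is obtained by running $\mathsf Y$ for the deterministic time $\mathsf D_k - \mathsf R_k$ starting uniformly, independently of the earlier excursions. Combined with the $\mathsf Z$-pieces between $\mathsf R_k$ and $\mathsf D_k$, this produces precisely the $k$th factor $E_{\mathsf Z_{\mathsf R_k},\mathsf Z_{\mathsf D_k}}[f_k((\mathsf X)_0^{\mathsf D_1})]$ after integrating out $\mathsf Y$, because conditioning on $\mathsf Z_{\mathsf D_1}$ for the uniform excursion reproduces $P_{z,z'}$ from (\ref{def:el}). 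Summing the coupling cost over $k$ and adding $P(A_N^c)$, the total error is bounded by $P(A_N^c) + k_* e^{-|G_N|^\epsilon} \to 0$.

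The main obstacle I expect is the bookkeeping required to verify that the iterative replacement really yields the claimed product form and to check compatibility of the conditioning on $\mathsf Z_{\mathsf D_1}$ that defines $P_{z,z'}$ with the coupling, rather than any serious analytic difficulty. The key quantitative inputs — the spectral gap estimate (\ref{st}), the lower bound $h_N \geq \lambda_N^{-1/2} |G_N|^{\epsilon/8}$ built into (\ref{s1}), and the mixing time bound (\ref{mix}) — together force the per-step TV error to be super-polynomially small, so polynomially many replacements are harmless.
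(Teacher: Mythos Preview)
Your proposal is correct and follows essentially the same route as the paper: condition on the $\mathsf Z$-trajectory via Fubini, then use the spectral-gap bound~(\ref{st}) together with Lemma~\ref{lem:mix} to replace the law of $\mathsf Y_{\mathsf R_k}$ by the uniform measure one factor at a time, with total cost bounded by $k_*$ times a super-polynomially small quantity. The paper phrases the iterative replacement as a direct inductive computation with the Markov property (yielding the bound $c|G_N|\sum_{k \geq 2} e^{-(\mathsf R_k - \mathsf D_{k-1})\lambda_N}$ and then taking $E^{\mathbb Z}_0$-expectation of the exponential), whereas you phrase it as a TV/coupling argument restricted to a good event~$A_N$; these are equivalent packagings of the same idea.

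One small slip: Proposition~\ref{pr:st} gives a \emph{pointwise} bound $|q_t^{G_N}(y,y') - |G_N|^{-1}| \leq e^{-\lambda_N t}$, so the total variation distance between $q_t^{G_N}(y,\cdot)$ and $\mu$ is bounded by $\tfrac{1}{2}|G_N|\,e^{-\lambda_N t}$, not $e^{-\lambda_N t}$. This extra factor of $|G_N|$ also appears in the paper's bound and is harmless, since $k_* |G_N| e^{-|G_N|^\epsilon} \to 0$. The ``bookkeeping'' you flag as the main remaining task --- identifying the decoupled product with $E^{\mathbb Z}_0\bigl[\prod_k E_{\mathsf Z_{\mathsf R_k},\mathsf Z_{\mathsf D_k}}[f_k]\bigr]$ --- is indeed the last step of the paper's proof, carried out there by two inductive applications of the strong Markov property for $\mathsf Z$ at the times $\mathsf R_{k_*}, \mathsf D_{k_*-1}, \ldots, \mathsf D_1$.
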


\begin{proof}[Proof of Lemma~\ref{lem:tv}.]
Consider first arbitrary measurable functions $g_k: {\mathcal P}(G)^f \to [0,1]$, $1 \leq k \leq k_*$, real numbers $0 \leq s_1 < s'_1 < \ldots < s_{k_*} < s'_{k_*} < \infty$ and set $$H_k = g_k(({\mathsf Y})_{s_k}^{s'_k}) 
.$$ With the simple Markov property applied at time $s_{k_*}$, then at time $s_{k_*-1}$, one obtains
\begin{align*}
&E^G \Bigl[ \prod_{1 \leq k \leq k_*} H_k \Bigr] = E^G \Bigl[ \Bigl( \prod_{1 \leq k \leq k_*-1} H_k \Bigr) E^G_{{\mathsf Y}_{s_{k_*}}} [g_{k_*}(({\mathsf Y})_0^{s'_{k_*} - s_{k_*}}) ]  \Bigr] \\ 
&\qquad = E^G \biggl[ \Bigl( \prod_{1 \leq k \leq k_*-1} H_k \Bigr) \sum_{y \in G} q^G_{s_{k_*}-s'_{k_*-1}}({\mathsf Y}_{s_{k_*-1}},y) \biggr] E^G_y [g_{k_*}(({\mathsf Y})_0^{s'_{k_*} - s_{k_*}}) ]  .
\end{align*}
With the estimate (\ref{st}) on the difference between the transition probability of $\mathsf Y$ inside the expectation and the uniform distribution and the fact that $g_k \in [0,1]$, it follows that 
\begin{align*}
\biggl| E^G \Bigl[ \prod_{1 \leq k \leq k_*} H_k \Bigr] - E^G \Bigl[  \prod_{1 \leq k \leq k_*-1} H_k \Bigr] E^G [g_{k_*}(({\mathsf Y})_0^{s'_{k_*} - s_{k_*}}) ] \biggr| \leq  c |G|\exp \{-(s_{k_*} - s'_{k_*-1}) \lambda_N \}.
\end{align*}
By induction, we infer that
\begin{align}
\biggl| E^G \Bigl[ \prod_{1 \leq k \leq k_*} g_k(({\mathsf Y})_{s_k}^{s'_k}) \Bigr] - \prod_{1 \leq k \leq k_*} E^G [g_{k}(({\mathsf Y})_0^{s'_k - s_k}) ] \biggr| \leq c |G| \sum_{2 \leq k \leq k_*} e^{-(s_k-s'_{k-1}) \lambda_N}. \label{eq:tv1}
\end{align}
Let us now consider the first expectation in (\ref{tv}). By Fubini's theorem, we find that
\begin{align*}
E \Bigl[ \prod_{1 \leq k \leq k_*} f_k (({\mathsf X})_{{\mathsf R}_k}^{{\mathsf D}_k}) \Bigr] = E^{\mathbb Z}_0 \Bigl[ E^G \Bigl[ \prod_{1 \leq k \leq k_*} f_k (({\mathsf Y})_{s_k}^{s'_k}, ({\bar z})_{s_k}^{s'_k}) \Bigr] \bigr|_{({\bar z})_{s_k}^{s'_k} =  ({\mathsf Z})_{{\mathsf R}_k}^{{\mathsf D}_k}}\Bigr].
\end{align*}
Observe that (\ref{eq:tv1}) applies to the $E^G$-expectation with $g_k(.) = f_k (., ({\bar z})_{s_k}^{s'_k})$, and yields
\begin{align}
&\biggl| E \Bigl[ \prod_{1 \leq k \leq k_*} f_k (({\mathsf X})_{{\mathsf R}_k}^{{\mathsf D}_k}) \Bigr] - E^{\mathbb Z}_0 \Bigl[ \prod_{1 \leq k \leq k_*} E^G \bigl[  f_k (({\mathsf Y})_0^{s'_k-s_k}, ({\mathsf Z})_{{\mathsf R}_k}^{{\mathsf D}_k}) \bigr] \Bigr] \biggr| \label{eq:tv2}\\
&\qquad \leq c |G|\sum_{2 \leq k \leq k_*} E^{\mathbb Z}_0 [e^{-({\mathsf R}_k-{\mathsf D}_{k-1}) \lambda_N}]. \nonumber
\end{align}
Note that for large $N$, the last term can be bounded with the estimate (\ref{mix}) on ${\mathsf R}_k-{\mathsf D}_{k-1}$:
\begin{align}
\sum_{2 \leq k \leq k_*} E^{\mathbb Z}_0 [e^{-({\mathsf R}_k-{\mathsf D}_{k-1})\lambda_N}]  \leq ck_* \exp\{-c'|G|^{c\epsilon}\} \stackrel{(\ref{def:sigma})}{\leq} c(\alpha) |G|^c \exp\{-c'|G|^{c\epsilon}\}. \label{eq:tv3}
\end{align}
It thus only remains to show that the second expectation on the left-hand side of (\ref{eq:tv2}) is equal to the second expectation in (\ref{tv}). Note that for any measurable functions $h_k: {\mathcal P}({\mathbb Z})^f \to [0,1]$, $1 \leq k \leq k_*$ and points $z_1, \ldots, z_{k_*}$, $z'_1, \ldots, z'_{k_*}$ in $\mathbb Z$ such that $P^{\mathbb Z}_{z_k}[{\mathsf Z}_{{\mathsf D}_1} = z'_k] > 0$ for $1 \leq k \leq k_*$, one has by two successive inductive applications of the strong Markov property at the times ${\mathsf R}_{k_*}, {\mathsf D}_{k_*-1}, {\mathsf R}_{k_*-1}, \ldots, {\mathsf D}_1$, with the convention $P_{z'_0} = P$,
\begin{align*}
&E^{\mathbb Z}_0 \Bigl[ \bigcap_{1 \leq k \leq k_*} \{{\mathsf Z}_{{\mathsf R}_k} = z_k, {\mathsf Z}_{{\mathsf D}_k} = z'_k\}, \prod_{1 \leq k \leq k_*} h_k(({\mathsf Z})_{{\mathsf R}_k}^{{\mathsf D}_k}) \Bigr] \\
&\qquad = \prod_{1 \leq k \leq k_*} \Bigl( P^{\mathbb Z}_{z'_{k-1}} [{\mathsf Z}_{{\mathsf R}_1} = z_k] E_{z_k,z'_k} [h_k(({\mathsf Z})_0^{{\mathsf D}_1})] P^{\mathbb Z}_{z_k}[{\mathsf Z}_{{\mathsf D}_1} = z'_k] \Bigr) \\
&\qquad = P^{\mathbb Z}_0 \Bigl[\bigcap_{1 \leq k \leq k_*} \{{\mathsf Z}_{{\mathsf R}_k} = z_k, {\mathsf Z}_{{\mathsf D}_k} = z'_k\}\Bigr]  \prod_{1 \leq k \leq k_*} E_{z_k,z'_k} [h_k(({\mathsf Z})_0^{{\mathsf D}_1})]. 
\end{align*}
Summing this last equation over all $z_k$, $z'_k$ as above, one obtains
\begin{align*}
E^{\mathbb Z}_0 \Bigl[ \prod_{1 \leq k \leq k_*} h_k(({\mathsf Z})_{{\mathsf R}_k}^{{\mathsf D}_k}) \Bigr] = E^{\mathbb Z}_0 \Bigl[ \prod_{1 \leq k \leq k_*} E_{{\mathsf Z}_{{\mathsf R}_k}, {\mathsf Z}_{{\mathsf D}_k}} [ h_k(({\mathsf Z})_0^{{\mathsf D}_1})] \Bigr].
\end{align*}
Applying this equation with $$h_k(({\mathsf Z})_{{\mathsf R}_k}^{{\mathsf D}_k}) =  E^G \bigl[  f_k (({\mathsf Y})_0^{s'_k-s_k}, ({\bar z})_{s_k}^{s'_k}) \bigr] \Bigr|_{({\bar z})_{s_.}^{s'_.} = ({\mathsf Z})_{{\mathsf R}_k}^{{\mathsf D}_k}},$$ substituting the result into (\ref{eq:tv2}) and remembering (\ref{eq:tv3}), we have shown (\ref{tv}).
\end{proof}

\section{Proof of the result in continuous time} \label{sec:c}

The purpose of this section is to prove in Theorem~\ref{thm:c} the continuous-time version of Theorem~\ref{thm:d}. Let us explain the role of the crucial estimates appearing in Lemmas~\ref{lem:ch} and \ref{lem:cap2}. Under the assumptions \ref{hyp:w}-\ref{hyp:k}, these lemmas exhibit the asymptotic behavior of the $P_{z,z'}$-probability (see~(\ref{def:el})) that an excursion of the path $\mathsf X$ visits vertices in the neighborhoods of the sites $x_m$ contained in a box $G_N \times I$. It is in particular shown that the probability that a set $V_m$ in the neighborhood of $x_m$ is visited equals $\textup{cap}^m(\Phi_m(V_m)) h_N/|G_N|$, up to a multiplicative factor tending to $1$ as $N$ tends to infinity. This estimate is similar to a more precise result proved by Sznitman for $G_N=({\mathbb Z}/N{\mathbb Z})^d$ in Lemma~1.1 of \cite{S08b}, where an identity is obtained for the same probability, if the distribution of the starting point of the excursion is the uniform distribution on the boundary of $G_N \times {\tilde I}$ (rather than the uniform distribution on $G_N \times \{z\}$).

\vspace{2mm}

According to the characterization (\ref{def:int}), these crucial estimates show that the law of the vertices in the neighborhood of $x_m$ not visited by such an excursion is comparable to ${\mathbb Q}^{{\mathbb G}_m \times {\mathbb Z}}_{h_N/|G_N|}$. 
In Lemma~\ref{lem:tv} of the previous section, we have seen that different excursions of the form $({\mathsf X})_{{\mathsf R}_k}^{{\mathsf D}_k}$, conditioned on the entrance and departure points of the $\mathbb Z$-projection, are close to independent for large $N$. According to the observation outlined in the last paragraph, the level of the random interlacement appearing in the neighborhood of $x_m$ at time $\alpha |G_N|^2$ is hence approximately equal to $h_N/|G_N|$ times the number of excursions to the interval $I$ performed until time $\alpha |G_N|^2$. As we have seen in Proposition~\ref{pr:sp} and Lemma~\ref{lem:loc}, this quantity is close to the local time ${\hat L}^{z_m}_{\alpha |G_N|^2}/|G_N|$ for large $N$. An invariance principle for local times due to R\'ev\'esz \cite{R81} (with assumption \ref{hyp:zconv}) serves to identify the limit of this quantity, hence the level of the random interlacement appearing in the large $N$ limit, as $L(v_m,\alpha)$. This strategy will yield the following result:
 
\begin{theorem}\label{thm:c}
Assume that \textup{\ref{hyp:w}-\ref{hyp:k}} are satisfied. Then the graphs ${\mathbb G}_m \times {\mathbb Z}$ are transient and as $N$ tends to infinity, the $\prod_{m=1}^M \{0,1\}^{{\mathbb G}_m} \times {\mathbb R}_+^M$-valued random variables
\begin{align*}
\Bigl( {\omega}^{1,N}_{\eta^{\mathsf X}_{\alpha |G_N|^2}}, \ldots, {\omega}^{M,N}_{\eta^{\mathsf X}_{\alpha |G_N|^2}}, \frac{{\mathsf L}^{z_1}_{\alpha |G_N|^2}}{|G_N|}, \ldots, \frac{{\mathsf L}^{z_M}_{\alpha |G_N|^2}}{|G_N|} \Bigr),  \, \alpha >0,
\end{align*}
defined by (\ref{def:pic}), (\ref{def:loct}), with ${\mathsf r}_N$ and $\phi_{m,N}$ chosen in (\ref{eq:rs}) and (\ref{def:iso}), converge in joint distribution under $P$ to the law of the random vector
$(\omega_1, \ldots, \omega_M, U_1, \ldots, U_m)$ with the following distribution: $(U_m)_{m=1}^M$ is distributed as $(L(v_m,\alpha))_{m=1}^M$ under $W$, and conditionally on $(U_m)_{m=1}^M$, the random variables $(\omega_m)_{m=1}^M$ have joint distribution $\prod_{1 \leq m \leq M} {\mathbb Q}_{U_m}^{{\mathbb G}_m \times {\mathbb Z}}$.
\end{theorem}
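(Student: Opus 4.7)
The plan is to characterize the limiting joint law by testing against events of the form $\bigcap_m \{\omega_m \equiv 1 \text{ on } \mathbb V_m\}$ intersected with bounded continuous functions $F_m$ of the rescaled local times. By (\ref{def:int}) and the product topology on $\{0,1\}^{\mathbb G_m \times \mathbb Z}$, it suffices to establish
\begin{align*}
&E\Bigl[ \prod_{m=1}^M F_m\bigl({\mathsf L}^{z_m}_{\alpha|G_N|^2}/|G_N|\bigr) \cdot \mathbf 1\bigl\{\omega^{m,N}_{\eta^{\mathsf X}_{\alpha|G_N|^2}} \equiv 1 \text{ on } \mathbb V_m\bigr\} \Bigr] \\
&\qquad \xrightarrow{N\to\infty} E^W\Bigl[ \prod_m F_m(L(v_m,\alpha)) \exp\bigl\{-L(v_m,\alpha)\,\textup{cap}^m(\mathbb V_m)\bigr\} \Bigr]
\end{align*}
for every choice of finite $\mathbb V_m \subset \mathbb G_m \times \mathbb Z$ and $F_m \in C_b(\mathbb R_+)$.

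First I would deploy the partially inhomogeneous grids of Section~\ref{sec:grid}. By (\ref{s3}), each $z_m$ lies deep inside a small interval $I_{l(m)}\in\mathcal I$, so setting $V_m=\Phi_m^{-1}(\mathbb V_m)$ (well-defined eventually by \ref{hyp:iso1}), the set $V_m$ can only be visited during excursions $({\mathsf X})_{{\mathsf R}_k}^{{\mathsf D}_k}$ with $Z_{R_k}\in I_{l(m)}$. Using Proposition~\ref{pr:sp}, part (\ref{sp1}), I replace avoidance of $\bigcup_m V_m$ up to time $\alpha|G_N|^2$ by avoidance during the first $k_*$ excursions, incurring a vanishing error. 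Then Lemma~\ref{lem:tv}, applied with $f_k$ equal to the indicator that the excursion avoids $\bigcup_m V_m$, allows me to replace the excursion sequence by conditionally independent excursions with laws $P_{{\mathsf Z}_{{\mathsf R}_k},{\mathsf Z}_{{\mathsf D}_k}}$; by \ref{hyp:disj}, the resulting single-excursion avoidance probabilities factor asymptotically across $m$, since the sets $V_m$ potentially reached in a single excursion sit in disjoint $C_m$'s and the probability of double-hits is of second order in $h_N/|G_N|$.

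Second, I invoke the capacity estimates announced in Lemmas~\ref{lem:ch} and \ref{lem:cap2}: for $z\in\partial I^c_{l(m)}$ and $z'\in\partial\tilde I_{l(m)}$,
\begin{align*}
P_{z,z'}\bigl[\mathsf X \text{ visits } V_m \text{ before } {\mathsf D}_1\bigr] = \textup{cap}^m(\mathbb V_m)\,\frac{h_N}{|G_N|}\bigl(1+o(1)\bigr),
\end{align*}
while this probability is $o(h_N/|G_N|)$ when $Z_{R_k}\notin I_{l(m)}$. Taking the product over $k\le k_*$, the joint avoidance probability becomes asymptotically $\exp\bigl\{-\sum_m \textup{cap}^m(\mathbb V_m)\cdot h_N N^N_m/|G_N|\bigr\}$ with $N^N_m=\#\{k\le k_*:Z_{R_k}\in I_{l(m)}\}$. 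Proposition~\ref{pr:sp}, parts (\ref{sp3})--(\ref{sp4}), combined with Lemma~\ref{lem:loc}, shows that $h_N N^N_m/|G_N|$ is $L^1$-close to ${\mathsf L}^{z_m}_{\alpha|G_N|^2}/|G_N|$, and R\'ev\'esz's invariance principle for the local times of simple random walk on $\mathbb Z$, together with \ref{hyp:zconv}, identifies the joint limit of $({\mathsf L}^{z_m}_{\alpha|G_N|^2}/|G_N|)_m$ under $P$ with $(L(v_m,\alpha))_m$ under $W$. A final bounded-convergence argument (via Skorokhod representation) then moves the $F_m$'s and the exponential inside the limit to yield the target identity.

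The main obstacle is the capacity estimate itself, i.e.\ the content of Lemmas~\ref{lem:ch} and \ref{lem:cap2}. Transferring the hitting probability of $V_m$ by an excursion on the finite cylinder $G_N\times\mathbb Z$ to the capacity computed on the transient limit model $\mathbb G_m\times\mathbb Z$ requires showing that a walk started on $\partial B(x_m,r_N)$ returns to a constant-size neighborhood of $x_m$ before exiting $G_N\times\tilde I$ only with vanishing probability. This is exactly where the auxiliary graph $\hat{\mathbb G}_m$ enters, via the isomorphism on $C_m$ (see~(\ref{eq:isod2})), the transience-type estimate Lemma~\ref{lem:mntrans} derived from \ref{hyp:trans}, and the mixing bound \ref{hyp:k}, which ensures that by the time $\mathsf Y$ has relaxed to near-uniform, the $\mathbb Z$-component has already traveled well beyond the diameter of $\tilde I$.
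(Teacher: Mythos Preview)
Your proposal follows the paper's route closely: the partially inhomogeneous grids of Section~\ref{sec:grid}, approximate independence of excursions via Lemma~\ref{lem:tv}, the capacity estimates of Lemmas~\ref{lem:ch}--\ref{lem:cap2}, local-time approximation via Proposition~\ref{pr:sp} and Lemma~\ref{lem:loc}, and R\'ev\'esz's invariance principle for the limit identification. The one substantive difference is that the paper tests against Laplace functionals $\exp\{-\theta_m {\mathsf L}^{z_m}_{s'}/|G_N|\}$ rather than general $F_m\in C_b(\mathbb R_+)$, and this choice is not cosmetic. First, the exponential converts the local-time factor into a product over excursions, $\prod_k \exp\{-(h_N/|G_N|)(\sum_{m\in\mathcal J_I}\theta_m)\mathbf 1\{{\mathsf Z}_{{\mathsf R}_k}\in I\}\}$, so that Lemma~\ref{lem:tv} applies to the entire integrand; your $F_m(h_N N^N_m/|G_N|)$ is not such a product, though you can salvage this step by observing that it is $\sigma(\mathsf Z)$-measurable and that the proof of Lemma~\ref{lem:tv} already conditions on $\mathsf Z$. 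Second, and more importantly, the exponential makes $A_N(s,s')$ in (\ref{eq:AN}) monotone in both time arguments, which is what drives the paper's sandwich between ${\mathsf D}_{k_*}$ and ${\mathsf D}_{k^*}$ via (\ref{sp1}), (\ref{sl0}), (\ref{sl3}) and the continuity of $A(\alpha)$ in $\alpha$; your one-line appeal to (\ref{sp1}) to replace avoidance up to continuous time $\alpha|G_N|^2$ by avoidance during the first $k_*$ excursions hides precisely this step, since (\ref{sp1}) controls the discrete times $D_{k_*},D_{k^*}$ rather than ${\mathsf D}_{k_*},{\mathsf D}_{k^*}$, and without monotonicity you would need an independent argument that the indicators $\mathbf 1\{{\mathsf H}_{V_m}>\alpha|G_N|^2\}$ and $\mathbf 1\{{\mathsf H}_{V_m}>{\mathsf D}_{k_*}\}$ agree with high probability.
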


\begin{proof}
The transience of the graphs ${\mathbb G}_m \times {\mathbb Z}$ is an immediate consequence of Lemma~\ref{lem:mntrans}. To define the local pictures in (\ref{def:pic}), we choose the ${\mathsf r}_N$ in (\ref{iso0}) as
\begin{align}
&{\mathsf r}_N = \Bigl(\min_{1 \leq m < m' \leq M} d(x_{m,N},x_{m',N}) \wedge r_N \wedge d_N \Bigr)/3 , \textrm{ cf.~\ref{hyp:dist}, \ref{hyp:iso1}, (\ref{s1})}\label{eq:rs} \\
&\textrm{and $\phi_{m,N}$ as the restriction of the isomorphism in \ref{hyp:iso1} to $B(y_{m,N},{\mathsf r}_N)$.} \label{def:iso}
\end{align}
Then the local pictures in (\ref{def:pic}) are defined. We set 
\begin{align}
B_{m,N} = B(x_{m,N},{\mathsf r}_N-1) \textrm{ and } {\mathbb B}_{m,N} = \Phi_{m,N}(B_{m,N}), \textrm{ for } {\mathsf r}_N \geq 1. \label{def:balls}
\end{align}
From now on, we drop $N$ from the notation in $\phi_{m,N}$, $B_{m,N}$ and ${\mathbb B}_{m,N}$ for simplicity.
Our present task is to show that for arbitrarily chosen finite subsets ${\mathbb V}_m$ of ${\mathbb G}_m \times {\mathbb Z}$,
\begin{align}
A_N(\alpha|G_N|^2,\alpha|G_N|^2) \to A(\alpha), \textrm{ for any $\theta_m \in {\mathbb R}_+$, $1 \leq m \leq M$.} \label{eq:c0}
\end{align}
where for times $s$, $s' \geq 0$ and $V_m = \Phi_m^{-1} {\mathbb V}_m$ (well-defined for large $N$, see (\ref{def:Phi})),
\begin{align}
A_N(s,s') &= E \Bigl[ \prod_{1 \leq m \leq M} \mathbf{1}_{\{{\mathsf H}_{V_m} > s\}} \exp \Bigl\{ - \frac{\theta_m}{|G_N|} {\mathsf L}^{z_m}_{s'} \Bigr\} \Bigr], \textrm{ and} \label{eq:AN} \\
A(\alpha) &= E^W \Bigl[ \exp \Bigl\{ - \sum_{1 \leq m \leq M}  L(v_m,\alpha) (\textup{cap}^m({\mathbb      V}_m) + \theta_m)  \Bigr\} \Bigr]. \label{eq:A}
\end{align}
Theorem~\ref{thm:c} then follows, as a result of the equivalence of weak convergence and convergence of Laplace transforms (see for example \cite{C74}, p.~189-191), the compactness of the set of probabilities on $\prod_m \{0,1\}^{{\mathbb G}_m \times {\mathbb Z}}$, and the fact that the canonical product $\sigma$-algebra on $\prod_m \{0,1\}^{{\mathbb G}_m \times {\mathbb Z}}$ is generated by the $\pi$-system of events $\cap_{m=1}^M \{\omega(x) = 1, \textrm{ for all } x \in {\mathbb V}_m\}$, with ${\mathbb V}_m$ varying over finite subsets of ${\mathbb G}_m \times {\mathbb Z}$.

\vspace{2mm}

We first introduce some additional notation and state some inclusions we shall use. For any interval $I \in {\mathcal I}$ (cf.~(\ref{def:I})), we denote by ${\mathcal J}_I$ the set of indices $m$ such that $z_m \in I$:
\begin{align}
&{\mathcal J}_I = \{1 \leq m \leq M: z_{m,N} \in I\} \, /= \emptyset \textrm{ if no $z_{m,N}$ belongs to } I. \label{def:J}
\end{align}
Note that the set ${\mathcal J}_I$ depends on $N$. Indeed, so does the labelling of the intervals $I_l$ in ${\mathcal I}$. It follows from the definition of ${\mathsf r}_N$ that
\begin{align}
&\textrm{the balls } ({\bar B}_m)_{1 \leq m \leq M} \textrm{ are disjoint, cf.~(\ref{def:balls}).} \label{eq:Bdisj}
\end{align}
Since the sets ${\mathbb V}_m$ are finite, we can choose a parameter $\kappa>0$ such that ${\mathbb V}_m \subset B((o_m,0),\kappa)$ for all $m$ and $N$. 
Since ${\mathsf r}_N$ tends to infinity with $N$, there is an $N_0 \in {\mathbb N}$ such that for all $N \geq N_0$, we have ${\mathsf r}_N \geq 1$ as well as for all $I \in {\mathcal I}$ and $m \in {\mathcal J}_I$,
\begin{align}
\begin{array}{lllllllll}
{\mathbb V}_m & \subset & B((o_m,0),\kappa) & \subset & {\mathbb B}_m & \subset & B(o_m,{\mathsf r}_N-1) \times {\mathbb Z}& &  \\
\downarrow \Phi_m^{-1} & & \downarrow \Phi_m^{-1} & & \downarrow \Phi_m^{-1} & & & & \\
V_m  & \subset & B(x_m,\kappa) & \subset & B_m & \stackrel{(\ref{eq:rs})}{\subset} & B(y_m,{\mathsf r}_N-1) \times I & \stackrel{\ref{hyp:inc}}{\subseteq} & C_m \times I.
\end{array} \label{eq:inc}
\end{align}
Since $d_N=o(|G_N|)$ (cf.~(\ref{s1}), \ref{hyp:mix}), any two sequences $z_m$ that are contained in the same interval $I \in {\mathcal I}$ infinitely often, when divided by $|G_N|$, must converge to the same number $v_m$, cf.~(\ref{hyp:zconv}). By \ref{hyp:disj}, we can hence increase $N_0$ if necessary, such that for all $N \geq N_0$,
\begin{align}
\textrm{for $m$ and $m'$ in ${\mathcal J}_I$, either $C_m = C_{m'}$ or $C_m \cap C_{m'} = \emptyset$.} \label{eq:disj}
\end{align}
We use $V_{I,m}$ to denote the union of all sets $V_{m'}$ included in $C_m \times I$ and $V_I$ for the union of all $V_m$ included in $G_N \times I$, i.e.
\begin{align}
V_{I,m} = \bigcup_{m' \in {\mathcal J}_I: C_{m'}=C_m} V_{m'} \subset C_m \times I, \textrm{ and } V_I = \bigcup_{m \in {\mathcal J}_I} V_m \stackrel{(\ref{eq:inc})}{\subset} G_N \times I, \label{def:VI}
\end{align}
with the convention that the union of no sets is the empty set.
\vspace{2mm}

The proof of (\ref{eq:c0}) uses three additional Lemmas that we now state. 
The first two lemmas show that the probability that the continuous-time random walk $\mathsf X$ started from the boundary of $G_N \times I$ hits a point in the set $V_I \subset G_N \times I$ (cf.~(\ref{def:VI})) before exiting $G \times {\tilde I}$ behaves like $h_N/|G_N|$ times the sum of the capacities of those sets ${\mathbb V}_m$ whose preimages under $\Phi_m$ are subsets of $G_N \times I$. 

\begin{lemma} \label{lem:ch}
Under \ref{hyp:w}-\ref{hyp:k}, for $N \geq N_0$ (cf.~(\ref{eq:inc}), (\ref{eq:disj})), any $I \in {\mathcal I}$, $I \subset {\tilde I} \in {\tilde {\mathcal I}}$, $z_1 \in \partial( I^c)$ and $z_2 \in \partial {\tilde I}$, 
\begin{align}
&1-c \frac{d_N}{h_N} \leq P_{z_1,z_2} \bigl[ {\mathsf H}_{V_I} < {\mathsf T}_{{\tilde B}} \bigr] \Bigl( \frac{h_N}{|G_N|} \textup{ cap}_{{\tilde B}}( V_I) \Bigr)^{-1} \leq 1+c \frac{d_N}{h_N}, \label{cap1}\\ 
& \textrm{ where } {\tilde B} = G_N \times {\tilde I} \textrm{ and } \textup{ cap}_{{\tilde B}}( V_I) = \sum_{x \in V_I} P_x[T_{\tilde B} < {\tilde H}_{V_I}]  w_x.  \nonumber
\end{align}
\end{lemma}

\begin{lemma} \label{lem:cap2}
With the assumptions and notation of Lemma~\ref{lem:ch},
\begin{align}
\lim_N \max_{I \in {\mathcal I}} \Bigl| \textup{ cap}_{\tilde B}(V_I) - \sum_{m \in {\mathcal J}_I} \textup{cap}^m({\mathbb      V}_m) \Bigr| = 0. \label{cap2}
\end{align}
\end{lemma}

\noindent The next lemma allows to disregard the the effect of the random walk trajectory until time ${\mathsf D}_1$, cf.~(\ref{sl2}), as well as the difference between ${\mathsf D}_{k^*}$ and ${\mathsf D}_{k_*}$, cf.~(\ref{sl3}).

\begin{lemma}\label{lem:sl}
Assuming \ref{hyp:w},
\begin{align}
&\lim_N \sup_{z \in {\mathbb Z}, x \in G_N \times {\mathbb Z}} P_z[{\mathsf H}_x \leq  {\mathsf D}_{k^*-k_*}] =0. \label{sl1}\\
&\lim_N \sup_{z \in {\mathbb Z}} P_z[{\mathsf H}_{\cup_{I} V_I} \leq  {\mathsf D}_1]=0. \label{sl2}\\
&\lim_N E \Bigl[ \Bigl| \prod_{1 \leq m \leq M} 1_{\{{\mathsf H}_{V_m} >  {\mathsf D}_{k^*}\}} - \prod_{1 \leq m \leq M} 1_{\{{\mathsf H}_{V_m} > {\mathsf D}_{k_*}\}} \Bigr| \Bigr]=0. \label{sl3}
\end{align}
\end{lemma}

\noindent Before we prove Lemmas~\ref{lem:ch}-\ref{lem:sl}, we show that they allow us to deduce Theorem~\ref{thm:c}. Throughout the proof, we set $T= \alpha |G_N|^2$ and say that two sequences of real numbers are limit equivalent if their difference tends to $0$ as $N$ tends to infinity. We first claim that in order to show (\ref{eq:c0}), it is sufficient to prove that
\begin{align}
A'_N = A_N({\mathsf D}_{k_*},T) \to A(\alpha), \textrm{ for } \alpha > 0.  \label{eq:suff0}
\end{align}
Indeed, by (\ref{sl3}), the statement (\ref{eq:suff0}) implies that also
\begin{align}
\lim_N A_N({\mathsf D}_{k^*},T) = A(\alpha), \textrm{ for } \alpha > 0.  \label{eq:suff1}
\end{align}
Now recall that $D_{k_*} \leq T \leq D_{k^*}$ with probability tending to $1$ by (\ref{sp1}). Together with (\ref{sl0}), it follows that
\begin{align*}
\lim_N P^{\mathbb Z}_0 [(1-\delta){\mathsf D}_{k_*} \leq T \leq (1+\delta) {\mathsf D}_{k^*} ] =1, \textrm{ for any } \delta>0.
\end{align*}
Monotonicity in both arguments of $A_N(.,.)$, (\ref{eq:suff0}) and (\ref{eq:suff1}) hence yield
\begin{align*}
&\limsup_N A_N \bigl( T/(1-\delta) , T/(1-\delta) \bigr) \leq \limsup_N A_N({\mathsf D}_{k_*},T) = A(\alpha) \textrm{ and}\\
&\liminf_N A_N \bigl( T/(1+\delta), T/(1+\delta) \bigr) \geq \liminf_N A_N({\mathsf D}_{k^*},T) = A(\alpha), \textrm{ for } 0 < \delta < 1.
\end{align*}
Replacing $\alpha$ by $\alpha(1-\delta)$ and $\alpha(1+\delta)$ respectively, we deduce that
\begin{align*}
A(\alpha(1+\delta)) \leq \liminf_N A_N(T,T) \leq \limsup_N A_N(T,T) \leq A(\alpha(1-\delta)), 
\end{align*}
for $\alpha > 0$ and $0 < \delta < 1$, from which (\ref{eq:c0}) follows by letting $\delta$ tend to $0$ and using the continuity of $A(.)$. Hence, it suffices to show (\ref{eq:suff0}). By (\ref{loc1}) $A'_N$ is limit equivalent to 
\begin{align}
E \Bigl[  \mathbf{1}_{\cap_{m} \{{\mathsf H}_{V_m} > {\mathsf D}_{k_*}\}} \exp \Bigl\{ - \sum_{1 \leq m \leq M}  \frac{\theta_m}{|G_N|} {\hat L}^{z_m}_{[T]} \Bigr\} \Bigr], \label{eq:tsuff}
\end{align}
which by (\ref{sl2}) remains limit equivalent if the event $\cap_{m} \{{\mathsf H}_{V_m} > {\mathsf D}_{k_*}\}$ is replaced by
\begin{align}
{\mathcal A}  =& \Bigl\{ \textrm{for all } 2 \leq k \leq k_*, \textrm{ whenever } {\mathsf Z}_{{\mathsf R}_k} \in I \textrm{ for some } I \in {\mathcal I}, {\mathsf X}_{[{\mathsf R}_k, {\mathsf D}_k]} \cap V_I = \emptyset \Bigr\}, \textrm{cf.~(\ref{def:I})}. \nonumber
\end{align}
Making use of (\ref{sp2}) and (\ref{sp4}) (together with $Z_{R_k} = {\mathsf Z}_{{\mathsf R}_k}$) we find that $A'_N$ is limit equivalent to
\begin{align}
E \Bigl[ \mathbf{1}_{\mathcal A} \exp \Bigl\{- \sum_{1 \leq l \leq M}  \frac{h_N (\sum_{m \in {\mathcal J}_{I_l}} \theta_m)}{|G_N|} \sum_{1 \leq k \leq k_*} \mathbf{1}_{\{{\mathsf Z}_{{\mathsf R}_k} \in I_l\}} \Bigr\} \Bigr], \textrm{ cf.~(\ref{def:J}).} \label{eq:t1}
\end{align}
Since $h_N = o(|G_N|)$ (cf.~(\ref{s1}), \ref{hyp:mix}), this expectation remains limit equivalent if we drop the $k=1$ term in the second sum. In other words, the expression in (\ref{eq:t1}) is limit equivalent to (recall the notation from (\ref{def:path}))
\begin{align*}
&E \Bigl[ \prod_{k=2}^{k_*} f( ({\mathsf X})_{{\mathsf R}_k}^{{\mathsf D}_k}) \Bigr], \textrm{ with } f: {\mathcal P}(G_N)^f \times {\mathcal P}({\mathbb Z})^f \to [0,1] \textrm{ defined by }\\
&f({\mathsf w}) = \prod_{1 \leq l \leq M} \Bigl( 1 - \mathbf{1}_{\{ {\mathsf w}_0 \in G_N \times I_l \}} \mathbf{1}_{\{ {\mathsf w}_{[0,\infty)} \cap V_{I_l} \neq \emptyset\}} \Bigr)  \exp \Bigl\{  - \frac{h_N (\sum_{m \in {\mathcal J}_{I_l}} \theta_m)}{|G_N|} \mathbf{1}_{\{{\mathsf w}_0 \in G_N \times I_l\}} \Bigr\}.
\end{align*}
By Lemma~\ref{lem:tv} with $f_1 = 1$, $f_k = f$ for $2 \leq k \leq k_*$, $A'_N$ is hence limit equivalent to
\begin{align*}
E^{\mathbb Z}_0 \Bigl[ \prod_{2 \leq k \leq k_*} E_{{\mathsf Z}_{{\mathsf R}_k}, {\mathsf Z}_{{\mathsf D}_k}} [f (({\mathsf X})_{0}^{{\mathsf D}_1})] \Bigr].
\end{align*}
The above expression equals
\begin{align}
&E^{\mathbb Z}_0 \Bigl[ \prod_{\substack{2 \leq k \leq k_* \\ 1 \leq l \leq M}} \Bigl( 1- \mathbf{1}_{\{{\mathsf Z}_{{\mathsf R}_k} \in I_l\}} g_l({\mathsf Z}_{{\mathsf R}_k},{\mathsf Z}_{{\mathsf D}_k})  \Bigr) \exp \Bigl\{ -  \frac{h_N (\sum_{m \in {\mathcal J}_{I_l}} \theta_m)}{|G_N|} \mathbf{1}_{\{{\mathsf Z}_{{\mathsf R}_k} \in I_l\}} \Bigr\} \Bigr],  \label{eq:tmess}\\
&\textrm{where } g_l(z,z') =  P_{z,z'} \bigl[ {\mathsf X}_{[0,{\mathsf D}_1]} \cap V_{I_l} \neq \emptyset \bigr]. \nonumber
\end{align}
From (\ref{cap1}), we know that
\begin{align}
1-c \frac{d_N}{h_N} \leq g_l({\mathsf Z}_{{\mathsf R}_k},{\mathsf Z}_{{\mathsf D}_k})  \Bigl( \frac{h_N}{|G_N|} \textup{ cap}_{G \times {\tilde I}_l}( V_I) \Bigr)^{-1} \leq 1+ c \frac{d_N}{h_N}. \label{eq:cappl}
\end{align}
With the inequality $0 \leq e^{-u} -1+u \leq u^2$ for $u \geq 0$, one obtains that
\begin{align*}
\Bigl| \prod_{\substack{2 \leq k \leq k_* \\ 1 \leq l \leq M}} \Bigl( 1- \mathbf{1}_{\{{\mathsf Z}_{{\mathsf R}_k} \in I_l\}} g  \Bigr) - \prod_{\substack{2 \leq k \leq k_* \\ 1 \leq l \leq M}} \exp \Bigl\{ - \mathbf{1}_{\{{\mathsf Z}_{{\mathsf R}_k} \in I_l\}} g \Bigr\} \Bigr| \leq  \sum_{\substack{2 \leq k \leq k_* \\ 1 \leq l \leq M}} \mathbf{1}_{\{{\mathsf Z}_{{\mathsf R}_k} \in I_l\}} g^2,
\end{align*}
where we have witten $g$ in place of $g_l({\mathsf Z}_{{\mathsf R}_k},{\mathsf Z}_{{\mathsf D}_k})$. The expectation of the right-hand side in the last estimate tends to $0$ as $N$ tends to infinity, thanks to (\ref{eq:cappl}) and (\ref{sp3}). The expression in (\ref{eq:tmess}) thus remains limit equivalent to $A'_N$ if we replace $1- \mathbf{1}_{\{{\mathsf Z}_{{\mathsf R}_k} \in I_l\}} g_l({\mathsf Z}_{{\mathsf R}_k},{\mathsf Z}_{{\mathsf D}_k})$ by $\exp \bigl\{ - \mathbf{1}_{\{{\mathsf Z}_{{\mathsf R}_k} \in I_l\}} g_l({\mathsf Z}_{{\mathsf R}_k},{\mathsf Z}_{{\mathsf D}_k}) \bigr\}$. Using again (\ref{sp3}), together with (\ref{cap2}) and (\ref{eq:cappl}), we may then replace $g_l({\mathsf Z}_{{\mathsf R}_k},{\mathsf Z}_{{\mathsf D}_k})$ by $\frac{h_N}{|G_N|} \sum_{m \in {\mathcal J}_{I_l}} \textup{ cap}^m({\mathbb V}_m)$. We deduce that the following expression is limit equivalent to $A'_N$:
\begin{align*}
E^{\mathbb Z}_0 \Bigl[ \exp \Bigl\{- \sum_{\substack{1 \leq k \leq k_*\\ 1 \leq l \leq M}} \sum_{m \in {\mathcal J}_{I_l}} \frac{h_N}{|G_N|} \mathbf{1}_{\{{\mathsf Z}_{{\mathsf R}_k} \in I_l\}}   \bigl(\textup{cap}^m({\mathbb V}_m) + \theta_m \bigr)  \Bigr\} \Bigr].
\end{align*}
By (\ref{sp4}) and (\ref{sp2}), this expression is also limit equivalent to
\begin{align}
E^{\mathbb Z}_0 \Bigl[ \exp \Bigl\{- \sum_{1 \leq m \leq M} \frac{1}{|G_N|} {\hat L}^{z_m}_{[T]}  \bigl(\textup{cap}^m({\mathbb V}_m) + \theta_m \bigr)  \Bigr\} \Bigr].\label{eq:tdis}
\end{align}
With Proposition~1 in \cite{R81}, one can construct a coupling of the simple random walk $Z$ on $\mathbb Z$ with a Brownian motion on $\mathbb R$ such that for any $\rho > 0$,
\begin{align*}
n^{-1/4 - \rho} \sup_{z \in {\mathbb Z}} \bigl| {\hat L}^z_n - L(z,n) \bigr| \stackrel{n \to \infty}{\longrightarrow} 0, \quad a.s.,
\end{align*}
where $L(.,.)$ is a jointly continuous version of the local time of the canonical Brownian motion.
It follows that (\ref{eq:tdis}), hence $A'_N$ is limit equivalent to
\begin{align}
E^{W} \Bigl[ \exp \Bigl\{- \sum_{1 \leq m \leq M}  \frac{1}{|G_N|} L(z_m,[\alpha |G_N|^2])   \bigl(\textup{cap}^m({\mathbb V}_m) + \theta_m \bigr)  \Bigr\} \Bigr]. \label{eq:tfin}
\end{align}
By Brownian scaling, $L(z_m,[\alpha |G|^2])/|G|$ has the same distribution as $$L(z_m/|G|,[\alpha |G|^2]/|G|^2).$$ Hence, the expression in (\ref{eq:tfin}) converges to $A(\alpha)$ in (\ref{eq:A}) by continuity of $L$ and convergence of $z_m/|G|$ to $v_m$, see~\ref{hyp:zconv}. We have thus shown that $A'_N \to A(\alpha)$ and by (\ref{eq:suff0}) completed the proof of Theorem~\ref{thm:c}.
\end{proof}

We still have to prove Lemmas~\ref{lem:ch}-\ref{lem:sl}. To this end, we first show that the random walk $X$ started at $\partial C_m \times I$ typically escapes from $G_N \times {\tilde I}$ before reaching a point in the vicinity of $x_m$. Here, the upper bound on $h_N$ in (\ref{s1}) plays a crucial role.
\begin{lemma} \label{lem:k}
Assuming \ref{hyp:w}-\ref{hyp:k}, for any fixed vertex ${\mathtt x} = ({\mathtt y},z) \in {\mathbb      G}_m \times {\mathbb Z}$, intervals $I \in {\mathcal I}$, $I \subset {\tilde I} \in {\tilde {\mathcal I}}$ (cf.~(\ref{def:I})) and $z_m \in I$, 
\begin{align}
\lim_N \sup_{y_0 \in \partial (C_m^c), z_0 \in {\mathbb Z}} P_{(y_0,z_0)} [ H_{\Phi^{-1}_m({\mathtt x})} < T_{G_N \times {\tilde I}}] =0. \label{k}
\end{align}
(Note that $\Phi^{-1}_m({\mathtt x})$ is well-defined for large $N$ by \ref{hyp:iso1}.)
\end{lemma}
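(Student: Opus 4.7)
The plan is to split the event according to the time threshold $t_N := \lambda_N^{-1}|G_N|^\epsilon$ appearing in assumption \ref{hyp:k}, writing
$$P_{(y_0,z_0)}\bigl[H_{\Phi_m^{-1}({\mathtt x})} < T_{G_N \times \tilde I}\bigr] \leq P_{(y_0,z_0)}\bigl[H_{\Phi_m^{-1}({\mathtt x})} < t_N\bigr] + P_{(y_0,z_0)}\bigl[T_{G_N \times \tilde I} \geq t_N\bigr].$$
Since $\Phi_m^{-1}({\mathtt y},z) = (\phi_m^{-1}({\mathtt y}), z_m + z)$ by \eqref{def:Phi}, the first term tends to $0$ uniformly over $y_0 \in \partial(C_m^c)$ and $z_0 \in {\mathbb Z}$ directly by \ref{hyp:k}.

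For the second term, the structural input is the gap $h_N^2 \ll t_N$ forced by \eqref{s1} and \ref{hyp:mix}: from $h_N \leq \lambda_N^{-1/2}|G_N|^{\epsilon/4}$ one gets $h_N^2 \leq \lambda_N^{-1}|G_N|^{\epsilon/2}$, which is $o(t_N)$. Intuitively, in $t_N$ discrete $X$-steps the ${\mathbb Z}$-projection executes vastly more than $h_N^2$ genuine ${\mathbb Z}$-moves, so it must have left the interval $\tilde I$ of radius $h_N$ long before.

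To formalize this I would pass to continuous time via the identification of $X$ with the discrete skeleton of ${\mathsf X}$. If $z_0 \notin \tilde I$ the event is empty, so assume $z_0 \in \tilde I$. Since exit from $G_N \times \tilde I$ depends only on the ${\mathbb Z}$-component, ${\mathsf T}_{G_N \times \tilde I} = {\mathsf T}^{\mathsf Z}_{\tilde I}$ and $T_{G_N \times \tilde I} = \eta^{\mathsf X}_{{\mathsf T}^{\mathsf Z}_{\tilde I}}$. By \ref{hyp:w}, the total weight at any vertex of $G_N \times {\mathbb Z}$ satisfies $w_{(y,z)} = w_y + 1 \leq c_1+1$, so Lemma~\ref{lem:sd} yields the stochastic domination $\eta^{\mathsf X}_s \leq \eta^{c_1+1}_s$ by a Poisson process of rate $c_1+1$. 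Setting $s_N = h_N^2 |G_N|^{\epsilon/8}$, standard exit-time estimates for the continuous-time simple random walk ${\mathsf Z}$ on ${\mathbb Z}$ (the exit time from $\tilde I$ has mean $O(h_N^2)$ and exponential tails on the scale $h_N^2$) give $\sup_{z_0 \in \tilde I} P^{\mathbb Z}_{z_0}[{\mathsf T}^{\mathsf Z}_{\tilde I} \geq s_N] \to 0$, while $(c_1+1)s_N = o(t_N)$ together with Poisson concentration gives $P[\eta^{c_1+1}_{s_N} \geq t_N] \to 0$. Combining,
$$P_{(y_0,z_0)}[T_{G_N \times \tilde I} \geq t_N] \leq \sup_{z_0 \in \tilde I} P^{\mathbb Z}_{z_0}[{\mathsf T}^{\mathsf Z}_{\tilde I} \geq s_N] + P[\eta^{c_1+1}_{s_N} \geq t_N] \xrightarrow{N \to \infty} 0,$$
uniformly in $(y_0,z_0)$, which together with the first bound closes the argument.

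I do not expect a genuine obstacle: the delicate point is matching the scales $h_N^2 \ll t_N$, which is exactly why the grid parameters were chosen with the inequality $h_N \leq \lambda_N^{-1/2}|G_N|^{\epsilon/4}$ in \eqref{s1}. Uniformity in $z_0$ is automatic because the exit-time bound for ${\mathsf Z}$ and the Poisson domination of $\eta^{\mathsf X}$ are both uniform in the starting point.
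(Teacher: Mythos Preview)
Your proof is correct and follows essentially the same approach as the paper: both split via the threshold $t_N = \lambda_N^{-1}|G_N|^\epsilon$, invoke \ref{hyp:k} for the first term, and exploit the scale gap $h_N^2 = o(t_N)$ from \eqref{s1} for the second. The only difference is that the paper handles the second term more directly by computing $E_{(y_0,z_0)}[T_{G_N \times \tilde I}] \leq (1+c_1)E^{\mathbb Z}_{z_0}[T_{\tilde I}] \leq c h_N^2$ and applying Markov's inequality, avoiding your intermediate scale $s_N$ and Poisson concentration.
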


\begin{proof}[Proof of Lemma~\ref{lem:k}.]
Consider any $x_0 = (y_0,z_0)$ with $y_0 \in \partial (C_m^c)$ and $z_0 \in {\mathbb Z}$.
In order to bound the expectation of $T_{G \times {\tilde I}}$, recall that $T_{\tilde I}$ denotes the exit time of the interval $\tilde I$ by the discrete-time process $Z$, so that $T_{G \times {\tilde I}}$ can be expressed as $T_{\tilde I}$ plus the number of jumps $\mathsf Y$ makes until $T_{\tilde I}$. Since $\mathsf Y$ and $\mathsf Z$, hence $\eta^Y$ and $\sigma^Z_.$, are independent under $P_{x_0}$, this implies with Fubini's theorem and stochastic domination of $\eta^Y$ by the Poisson process $\eta^{c_1}$ (cf.~(\ref{sd2})) that 
\begin{align*}
E_{x_0}[T_{G \times {\tilde I}}] =  E^{\mathbb Z}_{z_0} \bigl[ T_{\tilde I} + E^G_{y_0} [\eta^{\mathsf Y}_{\sigma^{\mathsf Z}_{T_{\tilde I}}} ] \bigr] \leq  E^{\mathbb Z}_{z_0} [ T_{\tilde I} ] + c_1 E^{\mathbb Z}_{z_0}[\sigma^{\mathsf Z}_{T_{\tilde I}}] = (1+c_1) E^{\mathbb Z}_{z_0} [ T_{\tilde I} ] \leq ch_N^2,
\end{align*}
using a standard estimate on one-dimensional simple random walk in the last step. Hence by the Chebyshev inequality and the bound (\ref{s1}) on $h_N$, 
\begin{align*}
P_{x_0}[T_{G \times \tilde I} \geq \lambda_N^{-1} |G|^\epsilon] \leq E_{x_0}[T_{G \times \tilde I}] \lambda_N |G|^{-\epsilon} \leq ch_N^2 \lambda_N |G|^{-\epsilon} \leq  c |G|^{-\epsilon/2}.
\end{align*}
The claim (\ref{k}) thus follows from \ref{hyp:k}.
\end{proof}

\begin{proof}[Proof of Lemma~\ref{lem:ch}.]
With $z_1$, $z_2$ as in the statement, we have by the strong Markov property applied at the hitting time of $V_I \subset G \times I$ (cf.~(\ref{eq:inc})),
\begin{align*}
&P_{z_1, z_2} [{\mathsf H}_{V_I} < {\mathsf T}_{{\tilde B}}] = P_{z_1} [ {\mathsf H}_{V_I} < {\mathsf T}_{\tilde B}, {\mathsf Z}_{{\mathsf T}_{\tilde B}} = z_2] / P^{\mathbb Z}_{z_1}[{\mathsf Z}_{T_{\tilde I}} = z_2] \\
&\qquad = E_{z_1} \bigl[{\mathsf H}_{V_I} < {\mathsf T}_{\tilde B}, P^{\mathbb Z}_{{\mathsf Z}_{{\mathsf H}_{V_I}}}[ {\mathsf Z}_{{\mathsf T}_{\tilde I}} = z_2] \bigr] / P^{\mathbb Z}_{z_1}[{\mathsf Z}_{{\mathsf T}_{\tilde I}} = z_2].
\end{align*}
From (\ref{s1}) and the definition of the intervals $I \subset {\tilde I}$, it follows that
\begin{align*}
\sup_{z \in I} \bigl| P^{\mathbb Z}_z [{\mathsf Z}_{{\mathsf T}_{\tilde I}} = z_2] - 1/2 \bigr| \leq cd_N/h_N,
\end{align*}
hence from the previous equality that
\begin{align}
(1- cd_N/h_N) P_{z_1} [{\mathsf H}_{V_I} < {\mathsf T}_{{\tilde B}}] \leq P_{z_1, z_2} [{\mathsf H}_{V_I} < {\mathsf T}_{{\tilde B}}] \leq P_{z_1} [{\mathsf H}_{V_I} < {\mathsf T}_{{\tilde B}}] (1+cd_N/h_N). \label{eq:ch1}
\end{align}
Note that $\{{\mathsf H}_{V_I} < {\mathsf T}_{\tilde B}\} = \{H_{V_I} < T_{\tilde B}\}$, $P_{z_1}$-a.s. Summing over all possible locations and times of the last visit of $X$ to the set $V_I$, one thus finds
\begin{align*}
P_{z_1} [{\mathsf H}_{V_I} < {\mathsf T}_{{\tilde B}}]  = \sum_{x \in V_I} \sum_{n=1}^\infty P_{z_1} \bigl[ \{ X_n=x, n< T_{\tilde B} \} \cap (\theta^X_n)^{-1} \{ {\tilde H}_x > T_{\tilde B} \} \bigr].
\end{align*}
After an application of the simple Markov property to the probability on the right-hand side, this last expression becomes
\begin{align*}
&\sum_{x \in V_I} E_{z_1} \Bigl[ \sum_{n=1}^{T_{\tilde B}} \mathbf{1}_{\{X_n=x\}}  \Bigr] P_x [{\tilde H}_x > T_{\tilde B}] \\
&\qquad = \sum_{x = (y,z) \in V_I} w_x E_{z_1} \Bigl[ \int_0^\infty \mathbf{1}_{\{{\mathsf Y}_t=y\}} \mathbf{1}_{\{{\mathsf Z}_t = z, t< {\mathsf T}_{\tilde I}\}} dt \Bigr] P_x [{\tilde H}_x > T_{\tilde B}],
\end{align*}
because the expected duration of each visit to $x$ by $\mathsf X$ is $1/w_x$.
Exploiting independence of $\mathsf Y$ and $(\mathsf Z, {\mathsf T}_{\tilde I})$ and the fact that ${\mathsf Y}_t$ is distributed according to the uniform distribution on $G$ under $P_{z_1}$, one deduces that
\begin{align}
P_{z_1} [{\mathsf H}_{V_I} < {\mathsf T}_{{\tilde B}}]  &= \sum_{x = (y,z) \in V_I} \frac{w_x}{|G|} E^{\mathbb Z}_{z_1} \Bigl[ \int_0^\infty \mathbf{1}_{\{{\mathsf Z}_t = z, t< {\mathsf T}_{\tilde I}\}} dt \Bigr] P_x [{\tilde H}_x > T_{\tilde B}]. \label{eq:ch2}
\end{align}
Since the expected duration of each visit of $\mathsf Z$ to any point is equal to $1$, we also have
\begin{align}
E^{\mathbb Z}_{z_1} \Bigl[ \int_0^\infty \mathbf{1}_{\{{\mathsf Z}_t = z, t< {\mathsf T}_{\tilde I}\}} dt \Bigr] = E^{\mathbb Z}_{z_1} \Bigl[ \sum_{n=0}^{T_{\tilde I}} \mathbf{1}_{\{Z_n=z\}} \Bigr] = P^{\mathbb Z}_{z_1} [H_z < T_{\tilde I}] / P^{\mathbb Z}_z [{\tilde H}_z > T_{\tilde I}], \label{eq:ch3}
\end{align}
where we have applied the strong Markov property at $H_z$ and computed the expectation of the geometrically distributed random variable with success parameter $P^{\mathbb Z}_z[{\tilde H}_z > T_{\tilde I}]$ in the last step.
Standard arguments on one-dimensional simple random walk (see for example \cite{durrett}, Section 3.1, (1.7), p.~179) show with (\ref{s1}) that the right-hand side of (\ref{eq:ch3}) is bounded from below by $h_N (1-cd_N/h_N)$ and from above by $h_N (1+cd_N/h_N)$. Substituting what we have found into (\ref{eq:ch2}) and remembering (\ref{eq:ch1}), we have proved (\ref{cap1}).
\end{proof}

\begin{proof}[Proof of Lemma~\ref{lem:cap2}.]
In order to prove (\ref{cap2}) it suffices to show that
\begin{align}
\lim_N \max_{m \in {\mathcal J}_I, {\mathtt x} \in {\mathbb V}_m} \bigl|P_{\Phi_m^{-1}({\mathtt x})}[ T_{\tilde B} < {\tilde H}_{V_I}] - {\mathbb P}^m_{\mathtt x}[ {\tilde H}_{{\mathbb      V}_m} = \infty] \bigr| = 0. \label{eq:ch3.1}
\end{align}
Indeed, since the sets $V_m$ are disjoint by (\ref{eq:Bdisj}) and (\ref{eq:inc}), assertion (\ref{eq:ch3.1}) implies that
\begin{align*}
&\max_{I \in {\mathcal I}} \Bigl| \textup{cap}_{\tilde B}(V_I) - \sum_{m \in {\mathcal J}_I} \textup{cap}^m({\mathbb      V}_m) \Bigr| \\
&\qquad = \max_{I \in {\mathcal I}}\Bigl| \sum_{m \in {\mathcal J}_I} \sum_{{\mathtt x} \in {\mathbb V}_m} \bigl( P_{\Phi_m^{-1}({\mathtt x})}[ T_{\tilde B} < {\tilde H}_{V_I}] -  {\mathbb P}^m_{\mathtt x} [ {\tilde H}_{{\mathbb      V}_m} = \infty] \bigr) w_{\mathtt x}  \Bigr| \longrightarrow 0, \textrm{ as } N \to \infty.
\end{align*}
The statement (\ref{eq:ch3.1}) follows from the two claims
\begin{align}
&\lim_N \max_{m \in {\mathcal J}_I, {\mathtt x} \in {\mathbb V}_m} \bigl| P_{\Phi_m^{-1}({\mathtt x})} [ T_{\tilde B} < {\tilde H}_{V_I}] - P_{\Phi_m^{-1}({\mathtt x})} [ T_{B_m} < {\tilde H}_{V_m}] \bigr| = 0 \textrm{ and} \label{eq:ch4}\\
&\lim_N \max_{m \in {\mathcal J}_I, {\mathtt x} \in {\mathbb V}_m} \bigl| {\mathbb P}^m_{\mathtt x}[ {\tilde H}_{{\mathbb      V}_m} = \infty] - P_{\Phi_m^{-1}({\mathtt x})} [ T_{B_m} < {\tilde H}_{V_m} ] \bigr| = 0.  \label{eq:ch4.1}
\end{align}
We first prove (\ref{eq:ch4}). It follows from the inclusions (\ref{eq:inc}) that $P_{\Phi_m^{-1}({\mathtt x})}\textrm{-a.s.}$,
$$T_{\tilde B} = T_{B_m} + T_{C_m \times {\tilde I}} \circ \theta^X_{T_{B_m}} + T_{\tilde B} \circ \theta^X_{T_{C_m \times {\tilde I}}} \circ \theta^X_{T_{B_m}}.$$
Since the sets $B_m$ are disjoint (cf.~(\ref{eq:Bdisj})), the strong Markov property applied at the exit times of $B_m$ and $C_m \times {\tilde I}$ shows that for $x = \Phi_m^{-1}({\mathtt x}) \in V_m$,
\begin{align}
&P_x[T_{\tilde B} < {\tilde H}_{V_I} ] = E_x \Bigl[ T_{B_m} < {\tilde H}_{V_m}, E_{X_{T_{B_m}}} \bigl[ T_{C_m \times {\tilde I}} < H_{V_{I,m}}, P_{X_{T_{C_m \times {\tilde I}}}} [T_{\tilde B} < H_{V_I}] \bigr] \Bigr]  \label{eq:ch5}\\
&\qquad \geq P_x [ T_{B_m} < {\tilde H}_{V_m}] \inf_{x_0 \in \partial B_m} P_{x_0}[T_{C_m \times {\tilde I}} < H_{V_{I,m}}] \inf_{x_0 \in \partial(C_m \times {\tilde I})} P_{x_0} [T_{\tilde B} < H_{V_I}]. \nonumber
\end{align}
We now show that $a_1$ and $a_2$ tend to $1$ as $N$ tends to infinity, where we have set
\begin{align}
a_1 =\inf_{x_0 \in \partial B_m} P_{x_0}[T_{C_m \times {\tilde I}} < H_{V_{I,m}}], \,\, a_2 =\inf_{x_0 \in \partial(C_m \times {\tilde I})} P_{x_0} [T_{\tilde B} < H_{V_I}]. \label{eq:ch5.1}
\end{align}
Concerning $a_1$, note first that
\begin{align}
a_1 \geq 1 - M \max_{m': C_{m'} = C_m} \sup_{x_0 \in \partial B_m} P_{x_0}[H_{V_{m'}} < T_{C_{m'} \times {\tilde I}}]. \label{eq:ch5.2}
\end{align}
With the strong Markov property applied at the entrance time of ${\bar B}_{m'}$, recall that ${\bar B}_m$ is either identical to or disjoint from ${\bar B}_m$ by (\ref{eq:Bdisj}), we can replace $\partial B_m$ by $\partial B_{m'}$ on the right-hand side of (\ref{eq:ch5.2}).
With this remark and the application of the isomorphism $\Psi_{m'}^{z_{m'}}$, one finds with (\ref{eq:isod2}) and ${\hat o}_m = \psi_m(y_m)$ that
\begin{align*}
\sup_{x_0 \in \partial B_m} P_{x_0}[H_{V_{m'}} < T_{C_{m'} \times {\tilde I}}] &\leq \sup_{{\mathtt x}_0 \in \partial B(({\hat o}_{m'},0),{\mathsf r}_N-1)} {\hat {\mathbb P}}^{m'}_{{\mathtt x}_0}[H_{\Psi_{m'}^{z_{m'}}(V_{m'})} < T_{\Psi_{m'}^{z_{m'}}(C_{m'} \times {\tilde I})}] \\
&\leq \sup_{{\mathtt x}_0 \in \partial B(({\hat o}_{m'},0),{\mathsf r}_N-1)} {\hat {\mathbb P}}^{m'}_{{\mathtt x}_0}[H_{\Psi_{m'}^{z_{m'}}(V_{m'})} < \infty].
\end{align*}
From $\Psi^{z_m}_m(V_m) \subset \Psi^{z_m}_m (B(x_m,\kappa)) = B(({\hat o}_m,0),\kappa)$, see~(\ref{eq:inc}), and the left-hand estimate in (\ref{mntrans}), we see that the right-hand side tends to $0$, and hence $a_1$ tends to $1$ as $N$ tends to infinity.
We now show that $a_2$ tends to $1$ as well. The infimum defining $a_2$ can only be attained for points $x_0=(y_0,z_0)$ with $y_0 \in \partial C_m$ (if $z_0 \in \partial {\tilde I}$, the probability is equal to $1$). Hence, we see that
\begin{align}
a_2 \geq 1- |V_I| \max_{m' \in {\mathcal J}_I} \max_{{\mathtt x}' \in {\mathbb V}_{m'}} \sup_{y_0 \in \partial C_m, z_0 \in {\tilde I}} P_{(y_0,z_0)}[ H_{\Phi^{-1}_{m'}({\mathtt x}')} < T_{\tilde B}]. \label{eq:ch7}
\end{align}
By applying the strong Markov property at the entrance time of the set $C_{m'} \times {\tilde I}$ (which is either identical to or disjoint from $C_m \times {\tilde I}$ by (\ref{eq:disj})), it follows that the supremum on the right-hand side of (\ref{eq:ch7}) is bounded from above by
\begin{align*}
\sup_{y_0 \in \partial (C_{m'}^c), z_0 \in {\tilde I}} P_{(y_0,z_0)}[ H_{\Phi^{-1}_{m'}({\mathtt x}')} < T_{\tilde B}],
\end{align*}
which tends to $0$ by the estimate (\ref{k}) of Lemma~\ref{lem:k}. Thus, both $a_1$ and $a_2$ in (\ref{eq:ch5.1}) tend to $1$ as $N$ tends to infinity. With (\ref{eq:ch5}) and the $P_x$-a.s.~inclusion $\{T_{\tilde B} < {\tilde H}_{V_I}\} \subseteq \{T_{B_m} < {\tilde H}_{V_m}\}$, we have shown the announced claim (\ref{eq:ch4}). 

To show (\ref{eq:ch4.1}), we apply the strong Markov property at the exit time of ${\mathbb B}_m$ and obtain for any ${\mathtt x} \in {\mathbb V}_m \subset {\mathbb B}_m$,
\begin{align*}
{\mathbb P}^m_{\mathtt x}[ {\tilde H}_{{\mathbb      V}_m} = \infty] &= {\mathbb E}^m_{\mathtt x} \bigl[ T_{{\mathbb B}_m} < {\tilde H}_{{\mathbb      V}_m}, {\mathbb P}^m_{X_{T_{{\mathbb B}_m}}} [ H_{{\mathbb      V}_m} = \infty] \bigr].
\end{align*}
The right-hand side can be bounded from above by
\begin{align*}
{\mathbb P}^m_{\mathtt x} [ T_{{\mathbb B}_m} < {\tilde H}_{{\mathbb      V}_m} ] = P_{\Phi_m^{-1}({\mathtt x})} [ T_{B_m} < {\tilde H}_{V_m} ], \textrm{ cf. } (\ref{eq:isod1}),
\end{align*}
and using ${\mathbb V}_m \subset B((o_m,0),\kappa)$ (cf.~(\ref{eq:inc})) from below by
\begin{align*}
P_{\Phi_m^{-1}({\mathtt x})} [ T_{B_m} < {\tilde H}_{V_m} ] (1- |V_m| \sup_{{\mathtt x}_0 \in \partial {\mathbb B}_m} \sup_{{\mathtt x}' \in B((o_m,0),\kappa)}   {\mathbb P}^m_{{\mathtt x}_0} [H_{{\mathtt x}'} < \infty]).
\end{align*}
The right-hand estimate in (\ref{mntrans}) shows that this last supremum tends to $0$, hence (\ref{eq:ch4.1}). This completes the proof of Lemma~\ref{lem:cap2}.
\end{proof}

\begin{proof}[Proof of Lemma~\ref{lem:sl}.]
Following the argument of Lemma~4.1 in \cite{S08}, we begin with the proof of (\ref{sl1}). To this end, it suffices to show that for 
\begin{align}
\gamma = {\mathsf t}_N \sigma_N^{3/4}, \textrm{ cf.~(\ref{def:t}), (\ref{def:sigma}),} \label{eq:gamma}
\end{align}
and some constant $c_2>0$,
\begin{align}
&\sup_{z \in {\mathbb Z}} P^{\mathbb Z}_z [{\mathsf D}_{k^*-k_*} > c_2 \gamma] \stackrel{N \to \infty}{\longrightarrow} 0 \textrm{ and } \label{eq:sl0} \\
&\sup_{z \in {\mathbb Z}, x \in G \times {\mathbb Z}} P_z [{\mathsf H}_x \leq c_2 \gamma] \stackrel{N \to \infty}{\longrightarrow} 0. \label{eq:sl1}
\end{align}
Observe first that by the definition of the grid in (\ref{def:grid}), the random variables $T_O$ and $R_1$ are both bounded from above by an exit-time $T_{[z-ch_N,z+ch_N]}$, $P^{\mathbb Z}_z$-a.s. With $E^{\mathbb Z}_z[T_{[z-ch_N,z+ch_N]}] \leq ch_N^2 \leq c {\mathsf t}_N$, it follows from Kha\'sminskii's Lemma (see \cite{S99}, Lemma 1.1, p.~292, and also \cite{K59}) that for some constant $c_3 >0$,
\begin{align}
\sup_{z \in {\mathbb Z}} E^{\mathbb Z}_z \bigl[ \exp \{ c_3 (T_O \vee R_1)/{\mathsf t}_N \} \bigr] \leq 2. \label{eq:sl2}
\end{align}
With the exponential Chebyshev inequality and the strong Markov property applied at the times $R_{k^*-k_*}, D_{k^*-k_*-1}, \ldots, D_1, R_1$, one deduces that
\begin{align*}
\sup_{z \in {\mathbb Z}} P^{\mathbb Z}_z [D_{k^*-k_*} > c \gamma] &\leq \exp \{- c c_3 \sigma_N^{3/4}\} \sup_{z \in {\mathbb Z}} E^{\mathbb Z}_z \bigl[ \exp \{ c_3 D_{k^*-k_*}/{\mathsf t}_N \} \bigr] \\
& \leq \exp \{ -c c_3 \sigma_N^{3/4} \} \Bigl( \sup_{z \in {\mathbb Z}} E^{\mathbb Z}_z \bigl[ \exp \{ c_3 (T_O \vee R_1)/{\mathsf t}_N \} \bigr] \Bigr)^{2 (k^*-k_*)} \\
&\stackrel{(\ref{eq:sl2})}{\leq} \exp\{ -c c_3 \sigma_N^{3/4} + 2 (\log 2) 2 [\sigma_N^{3/4}]\}.
\end{align*}
Hence, the claim (\ref{eq:sl0}) with $\mathsf D$ replaced by $D$ follows for a suitably chosen constant $c$. The claim with $\mathsf D$ for a slightly larger constant $c_2$ is then a simple consequence of Lemma~\ref{lem:DLLN}, applied with $a_N = k^*-k_*$. 

To prove (\ref{eq:sl1}), note that the expected amount of time spent by the random walk $\mathsf X$ at a site $x$ during the time interval $[{\mathsf H}_x, {\mathsf H}_x + 1]$ is bounded from below by $(1 \wedge \sigma^{\mathsf X}_1) \circ \theta_{{\mathsf H}_x}$. Hence, for $z \in {\mathbb Z}$ and $x=(y',z') \in G \times {\mathbb Z}$, the Markov property at time ${\mathsf H}_x$ yields
\begin{align*}
E_z \Bigl[ \int_0^{c_2 \gamma+1} \mathbf{1}_{\{{\mathsf X}_t = x\}} dt \Bigr] \geq  P_z[{\mathsf H}_x \leq c_2 \gamma]  \inf_{x' \in G \times {\mathbb Z}} E_{x'}[1 \wedge \sigma^{\mathsf X}_1] \stackrel{\ref{hyp:w}}{\geq} c P_z[{\mathsf H}_x \leq c_2 \gamma].
\end{align*}
Using the fact that ${\mathsf Y}_t$ is distributed according to the uniform distribution on $G$ under $P_z$, and the bound (\ref{zk}) on the heat kernel of $\mathsf Z$, the left-hand side is bounded by
\begin{align*}
\frac{c}{|G|} \int_0^{c_2 \gamma +1} P^{\mathbb Z}_z [ {\mathsf Z}_t = z' ] dt \leq c \frac{\sqrt{\gamma}}{|G|}.
\end{align*}
We have therefore found that
\begin{align*}
&\sup_{z \in {\mathbb Z}, x \in E} P_z [{\mathsf H}_x \leq c_2 \gamma] \leq c \sqrt{\gamma} |G|^{-1} \stackrel{(\ref{eq:gamma})}{\leq} c \sqrt{{\mathsf t}_N} \sigma^{3/8} |G|^{-1} \stackrel{(\ref{def:sigma}),(\ref{def:t})}{\leq} c(\alpha) (h_N/|G|)^{1/4} 
\end{align*}
and by (\ref{s1}) and \ref{hyp:mix}, we know that $h_N/|G|$ is bounded by $|G|^{-\epsilon/4}$. This completes the proof of (\ref{eq:sl1}) and hence (\ref{sl1}).

\vspace{2mm}

Note that (\ref{sl2}) is a direct consequence of (\ref{sl1}), since the probability in (\ref{sl2}) is smaller than $(\sum_{m} |V_m|) \sup_{z \in {\mathbb Z}, x \in E} P_z[{\mathsf H}_x \leq {\mathsf D}_1]$.

\vspace{2mm}

Finally, the expectation in (\ref{sl3}) is smaller than 
\begin{align*}
P \bigl[ {\mathsf \theta}^{-1}_{{\mathsf D}_{k_*}} \{{\mathsf H}_{\cup_{I} V_I} \leq {\mathsf D}_{k^*-k_*}\} \bigr]  = E \bigl[ P_{{\mathsf Z}_{{\mathsf D}_{k_*}}}  [{\mathsf H}_{\cup_{I} V_I} \leq {\mathsf D}_{k^*-k_*} ] \bigr],
\end{align*}
and hence (\ref{sl3}) follows from (\ref{sl2}).
\end{proof}

\section{Estimates on the jump process} \label{sec:jump}

In this section, we provide estimates on the jump process $\eta^{\mathsf X} = \eta^{\mathsf Y} + \eta^{\mathsf Z}$ of $\mathsf X$ that will be of use in the reduction of Theorem~\ref{thm:d} to the continuous-time result Theorem~\ref{thm:c} in the next section. There, the number $[\alpha |G|^2]$ of steps of $X$ will be replaced by a random number $\eta^{\mathsf X}_{\alpha' |G|^2}$ of jumps and this will make the local time $L^z (\eta^{\mathsf X}_{\alpha|G|^2})$ appear. We hence prove results on the large $N$ behavior of $\eta^X_{\alpha |G|^2}$ (Lemma~\ref{lem:erg}) and $L^z (\eta^{\mathsf X}_{\alpha|G|^2})$ (Lemma~\ref{lem:loct}), for $\alpha>0$. Of course, there is no difficulty in analyzing the Poisson process $\eta^Z$ of constant parameter $1$. The crux of the matter is the $N$-dependent and inhomogeneous component $\eta^{\mathsf Y}$. Let us start by investigating the expectation of $\eta^{\mathsf Y}_t$.

\begin{lemma} \label{lem:exp}
\begin{align}
&\sup_{y \in G} E^G_y [\eta^{\mathsf Y}_t] \leq \max_{y \in G} w_y t, \textrm{ and } \label{incl1}\\
&E^G[ \eta^{\mathsf Y}_t] = tw(G)/|G|, \textrm{ for } t \geq 0 \textrm{ and all } N. \label{exp}
\end{align}
\end{lemma}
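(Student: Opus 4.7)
My approach is to derive both bounds from a single identity, namely the fact that the jump process $\eta^{\mathsf Y}$ has as compensator the integrated rate $\int_0^\cdot w_{{\mathsf Y}_s}\,ds$, i.e.
\begin{align*}
E^G_y[\eta^{\mathsf Y}_t] = E^G_y\Bigl[\int_0^t w_{{\mathsf Y}_s}\,ds\Bigr] = \int_0^t E^G_y[w_{{\mathsf Y}_s}]\,ds, \quad y \in G,\, t \geq 0.
\end{align*}
Since $G=G_N$ is finite and $w$ is bounded, this identity is standard for continuous-time Markov chains on finite state spaces; I would either invoke it directly or prove it by conditioning on the discrete skeleton $Y$ and using \eqref{sd1} of Lemma~\ref{lem:sd}: conditionally on $Y$, the holding time at $Y_{n-1}$ is $\exp(w_{Y_{n-1}})$-distributed with mean $1/w_{Y_{n-1}}$, and a routine interchange of sum and integral yields the identity.

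Given this identity, statement \eqref{incl1} is immediate: bounding $w_{{\mathsf Y}_s} \leq \max_{y \in G} w_y$ pointwise gives
\begin{align*}
E^G_y[\eta^{\mathsf Y}_t] \leq \int_0^t \max_{y \in G} w_y\,ds = t \max_{y \in G} w_y,
\end{align*}
uniformly in the starting point $y$. Alternatively, \eqref{incl1} drops out of the stochastic domination \eqref{sd2} by taking expectations, since $\eta^{\sup_y w_y}_t$ is Poisson with mean $t\max_y w_y$ on the finite graph $G$.

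For \eqref{exp}, I would use that the uniform measure $\mu$ is reversible, hence stationary, for the continuous-time transition semigroup $q^G_t(\cdot,\cdot)$, as noted just before \eqref{def:pr2}. Consequently, under $P^G$ the marginal law of ${\mathsf Y}_s$ is $\mu$ for every $s \geq 0$, so
\begin{align*}
E^G[\eta^{\mathsf Y}_t] = \int_0^t E^G[w_{{\mathsf Y}_s}]\,ds = \int_0^t \sum_{y \in G} \mu(y) w_y\,ds = t\, \frac{w(G)}{|G|}.
\end{align*}
The only nontrivial step is the compensator identity itself; everything else is a direct substitution. Since we are on a finite state space with bounded rates, there is no integrability or localization issue to address, so I do not anticipate any real obstacle.
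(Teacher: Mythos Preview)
Your proposal is correct and follows essentially the same approach as the paper: the paper invokes the martingale $M_t=\eta^{\mathsf Y}_t-\int_0^t w_{{\mathsf Y}_s}\,ds$ (citing Chou--Meyer and Darling--Norris) and then takes expectations, using stationarity of $\mu$ for \eqref{exp}, exactly as you do. Your write-up is in fact slightly more explicit than the paper's in deriving \eqref{incl1} from the identity (the paper just says ``take the $E^G_y$-expectation''), and your alternative route to \eqref{incl1} via the domination \eqref{sd2} is a harmless bonus.
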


\begin{proof}
Under $P^G_y$, $y \in G$, the process
\begin{align}
M_t=\eta_t - \int_0^t w({\mathsf Y}_s) \, ds, \, t \geq 0, \label{eq:exp1}
\end{align}
is a martingale, see Chou and Meyer \cite{CM75}, Proposition 3. A proof of a slightly more general fact is also given by Darling and Norris \cite{DN08}, Theorem~8.4. In order to prove (\ref{incl1}), we take the $E^G_y$-expectation in (\ref{eq:exp1}). If we take the $E^G$-expectation in (\ref{eq:exp1}) and use that $E^G[w({\mathsf Y}_s)] = E^G[w({\mathsf Y}_0)] = w(G)/|G|$ by stationarity, we find (\ref{exp}).
\end{proof}

We next bound the covariance and variance of increments of $\eta^{\mathsf Y}$. Let us denote the compensated increments of $\eta^{\mathsf Y}$ as
\begin{align}
I^{\mathsf Y}_{s,t} = \eta^{\mathsf Y}_t - \eta^{\mathsf Y}_s - (t-s)w(G)/|G|, \textrm{ for } 0 \leq s \leq t. \label{def:Yinc}
\end{align}

\begin{lemma} \label{lem:inc}
Assuming \ref{hyp:w}, one has for $0 \leq s \leq t \leq s' \leq t'$,
\begin{align}
& | \textup{cov}_{P^G} (I^{\mathsf Y}_{s,t}, I^{\mathsf Y}_{s',t'}) | \leq  c_1^2 (t-s) (t'-s') |G| \exp \{-(s'-t) \lambda_N\}, \label{inccov}\\
& \textup{var}_{P^G} (I^{\mathsf Y}_{s,t}) \leq c_1 (t-s) + c_1^2 (t-s)^2 . \label{incvar}
\end{align}
\end{lemma}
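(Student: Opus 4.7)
Both estimates rest on the martingale decomposition underlying the jump process. By Lemma~\ref{lem:exp} the process $M_t = \eta^{\mathsf Y}_t - \int_0^t w({\mathsf Y}_s)\,ds$ is a $P^G_y$-martingale (hence a $P^G$-martingale) with predictable quadratic variation $\langle M\rangle_t = \int_0^t w({\mathsf Y}_s)\,ds$, and $E^G[w({\mathsf Y}_u)] = w(G)/|G|$ by stationarity of $\mu$. This gives the centered splitting
\begin{equation*}
I^{\mathsf Y}_{s,t} = N_{s,t} + \bar J_{s,t}, \qquad N_{s,t} = M_t-M_s, \qquad \bar J_{s,t} = \int_s^t \bigl(w({\mathsf Y}_u) - w(G)/|G|\bigr)du,
\end{equation*}
under $P^G$, with each piece of mean zero.

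\textbf{Covariance (\ref{inccov}).} The plan is to condition on $\mathcal F_t$ and use the Markov property. By Fubini,
\begin{equation*}
E^G[I^{\mathsf Y}_{s',t'}\mid \mathcal F_t] \;=\; \int_{s'-t}^{t'-t}\Bigl(\sum_{y'} q^G_r({\mathsf Y}_t,y')w_{y'} - w(G)/|G|\Bigr)\,dr.
\end{equation*}
Applying the spectral-gap bound (\ref{st}) vertex-by-vertex and using \ref{hyp:w} gives the sup-norm estimate
\begin{equation*}
\Bigl|\sum_{y'}\bigl(q^G_r(y_0,y')-|G|^{-1}\bigr)w_{y'}\Bigr|\;\leq\;\sum_{y'}e^{-\lambda_N r}w_{y'}\;\leq\; c_1|G|e^{-\lambda_N r},
\end{equation*}
so that $|E^G[I^{\mathsf Y}_{s',t'}\mid\mathcal F_t]|\leq c_1|G|(t'-s')e^{-\lambda_N(s'-t)}$ almost surely. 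Combined with the crude bound $|I^{\mathsf Y}_{s,t}|\leq (\eta^{\mathsf Y}_t-\eta^{\mathsf Y}_s) + (t-s)w(G)/|G|$, which together with (\ref{exp}) gives $E^G[|I^{\mathsf Y}_{s,t}|]\leq 2c_1(t-s)$, the tower identity $\mathrm{cov}(I^{\mathsf Y}_{s,t},I^{\mathsf Y}_{s',t'}) = E^G[I^{\mathsf Y}_{s,t}E^G[I^{\mathsf Y}_{s',t'}\mid\mathcal F_t]]$ yields (\ref{inccov}) (the implicit factor of $2$ is absorbed into the constants).

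\textbf{Variance (\ref{incvar}).} Applying $(a+b)^2\leq 2a^2+2b^2$ to the decomposition above gives
\begin{equation*}
\mathrm{var}_{P^G}(I^{\mathsf Y}_{s,t}) \;=\; E^G[(N_{s,t}+\bar J_{s,t})^2]\;\leq\;2\,E^G[N_{s,t}^2]+2\,E^G[\bar J_{s,t}^2].
\end{equation*}
The martingale term is handled by $E^G[N_{s,t}^2]=E^G[\langle M\rangle_t - \langle M\rangle_s]=(t-s)w(G)/|G|\leq c_1(t-s)$, while the drift term is bounded pathwise, $|\bar J_{s,t}|\leq c_1(t-s)$ by \ref{hyp:w}, so $E^G[\bar J_{s,t}^2]\leq c_1^2(t-s)^2$. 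This yields (\ref{incvar}) up to absorbable constants.

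\textbf{Main obstacle.} The delicate point is the sup-norm bound on $H_r w(y)-\mu(w)$: the vertex-by-vertex estimate (\ref{st}) summed over $|G|$ vertices is exactly what produces the $|G|$-factor in (\ref{inccov}), so the bound cannot be improved in that direction without invoking the sharper $L^2$-contraction of the semigroup on mean-zero functions. Stationarity of $\mu$ is essential: it ensures both summands in the martingale decomposition are $P^G$-centered, which is what turns the conditional-expectation computation into a genuine decorrelation estimate rather than a trivial bound.
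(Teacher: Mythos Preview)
Your argument is correct. For the covariance bound~(\ref{inccov}) you do essentially what the paper does: condition via the Markov property and feed the spectral-gap estimate~(\ref{st}) into the conditional mean, picking up the factor $|G|$ from summing over vertices. The paper phrases it as two Markov applications (at $s'$, then at $t$) and bounds $|E^G_y[I_{0,t'-s'}]|$ rather than your integral expression, but the mechanism is identical.

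For the variance bound~(\ref{incvar}) your route is different. The paper does not use the martingale decomposition at all: it simply observes that $\textup{var}_{P^G}(I^{\mathsf Y}_{s,t}) \le E^G[(\eta^{\mathsf Y}_t-\eta^{\mathsf Y}_s)^2] = E^G[(\eta^{\mathsf Y}_{t-s})^2]$ by stationarity, and then invokes the Poisson domination~(\ref{sd2}) to bound this by the second moment of a Poisson$(c_1(t-s))$ variable, namely $c_1(t-s)+c_1^2(t-s)^2$. This is shorter and delivers the stated constants exactly. Your martingale splitting with $(a+b)^2\le 2a^2+2b^2$ is structurally informative but produces $2c_1(t-s)+2c_1^2(t-s)^2$; since $c_1$ is a \emph{named} constant fixed in~\ref{hyp:w}, these factors of~$2$ are not actually absorbable into it as you claim. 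The same caveat applies to your covariance bound. This does not affect any downstream application in the paper (only the exponential decay and the orders in $(t-s)$ matter), but strictly speaking you have proved the lemma with $2c_1$ in place of~$c_1$.
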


\begin{proof}
In Lemma~\ref{lem:exp}, we have proved that $E^G[I_{r,r'}]=0$ for $0 \leq r \leq r'$, so that by the Markov property applied at time $s'$, the left-hand side of (\ref{inccov}) can be expressed as
\begin{align*}
|E^G[I_{s,t} I_{s',t'}]| = |E^G[I_{s,t} (E^G_{{\mathsf Y}_{s'}}[I_{0,t'-s'}] - E^G[I_{0,t'-s'}])]|.
\end{align*}
With an application of the Markov property at time $t$, this last expression becomes
\begin{align*}
&\Bigl| \sum_{y \in G} E^G \bigl[ I_{s,t} (q^G_{s'-t}({\mathsf Y}_t, y) - |G|^{-1}) \bigr] E^G_y [I_{0,t'-s'}] \Bigr| \\
& \leq  \sum_{y \in G} E^G \bigl[ |I_{s,t}| |q^G_{s'-t}({\mathsf Y}_t, y) - |G|^{-1}| \bigr] |E^G_y [I_{0,t'-s'}]|. 
\end{align*}
The claim (\ref{inccov}) thus follows by applying the estimate (\ref{st}) inside the expectation, then (\ref{incl1}) and $w(G)/|G| \leq c_1$ in order to bound the remaining terms.

To show (\ref{incvar}), we apply the Markov property at time $s$ and domination of $\eta^{\mathsf Y}_{t-s}$ by a Poisson random variable of parameter $c_1 (t-s)$ (cf.~(\ref{sd2})):
\begin{flushright}
$\textup{var}_{P^G} (I^{\mathsf Y}_{s,t}) \leq E^G [(\eta^{\mathsf Y}_t - \eta^{\mathsf Y}_s)^2] = E^G [(\eta^{\mathsf Y}_{t-s})^2] \leq c_1 (t-s) + c_1^2 (t-s)^2. \qquad \qquad  \qedhere$
\end{flushright}
\end{proof}

In the next Lemma, we transfer some of the previous estimates to the process $\eta^{\mathsf Y}_{\sigma^{\mathsf Z}_.}$.

\begin{lemma} \label{cor:inc}
Assuming \ref{hyp:w},
\begin{align}
&E[ \eta^{\mathsf Y}_{\sigma^{\mathsf Z}_1}] = w(G)/|G|. \label{exp'}\\
&\sup_{x \in G \times {\mathbb Z}} E_x [ \eta^{\mathsf Y}_{\sigma^{\mathsf Z}_1} ] \leq c_1. \label{incl1'} \\
&\sup_{x \in G \times {\mathbb Z}} E_x [(\eta^{\mathsf Y}_{\sigma^{\mathsf Z}_1})^2] \leq  c_1+ 2c_1^{2}. \label{incvar'}
\end{align}
\end{lemma}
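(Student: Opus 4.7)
The three estimates all follow from a single common mechanism: under $P_x = P^G_y \times P^{\mathbb Z}_z$, the processes $\mathsf Y$ and $\mathsf Z$ are independent, so $\eta^{\mathsf Y}$ is independent of $\sigma^{\mathsf Z}_1$. Moreover, since $\mathbb Z$ carries edge-weights $1/2$, every vertex of $\mathbb Z$ has total weight $w_z = 1$, so by Lemma~\ref{lem:sd} applied to $\mathsf Z$, the first jump time $\sigma^{\mathsf Z}_1$ is $\exp(1)$-distributed. In particular, $E[\sigma^{\mathsf Z}_1] = 1$ and $E[(\sigma^{\mathsf Z}_1)^2] = 2$. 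The plan is then to condition on $\sigma^{\mathsf Z}_1$ and reduce each statement to a bound on the deterministic-time version $E^G_y[\eta^{\mathsf Y}_t]$ or $E^G_y[(\eta^{\mathsf Y}_t)^2]$.

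For \eqref{exp'}, by independence and Fubini,
\begin{align*}
E[\eta^{\mathsf Y}_{\sigma^{\mathsf Z}_1}] = E\bigl[E^G[\eta^{\mathsf Y}_t]\bigr|_{t=\sigma^{\mathsf Z}_1}\bigr] \stackrel{\eqref{exp}}{=} \frac{w(G)}{|G|} E[\sigma^{\mathsf Z}_1] = \frac{w(G)}{|G|}.
\end{align*}
For \eqref{incl1'}, the same conditioning together with \eqref{incl1} and $\max_y w_y \leq c_1$ (from \ref{hyp:w}) gives
\begin{align*}
\sup_x E_x[\eta^{\mathsf Y}_{\sigma^{\mathsf Z}_1}] \leq c_1 E[\sigma^{\mathsf Z}_1] = c_1.
\end{align*}

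For \eqref{incvar'}, I would use the stochastic domination from Lemma~\ref{lem:sd}: under $P^G_y$, $\eta^{\mathsf Y}_t$ is bounded above by a Poisson variable of parameter $c_1 t$, for which the second moment is $c_1 t + (c_1 t)^2$. Conditioning on $\sigma^{\mathsf Z}_1$ as before,
\begin{align*}
\sup_x E_x[(\eta^{\mathsf Y}_{\sigma^{\mathsf Z}_1})^2] \leq c_1 E[\sigma^{\mathsf Z}_1] + c_1^2 E[(\sigma^{\mathsf Z}_1)^2] = c_1 + 2c_1^2.
\end{align*}

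There is no genuine obstacle here; the statement is essentially a bookkeeping consequence of independence of the two components of $\mathsf X$, the explicit $\exp(1)$ law of $\sigma^{\mathsf Z}_1$, and the already established deterministic-time bounds on $\eta^{\mathsf Y}$ from Lemmas~\ref{lem:exp} and \ref{lem:sd}. The only minor care needed is to remember that $\sigma^{\mathsf Z}_1$ is a jump time of the $\mathbb Z$-component of $\mathsf X$ and not of $\mathsf X$ itself, so that independence of $\sigma^{\mathsf Z}_1$ from $\eta^{\mathsf Y}$ holds without any further conditioning.
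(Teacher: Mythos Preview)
Your proof is correct and follows essentially the same approach as the paper: independence of $\eta^{\mathsf Y}$ and $\sigma^{\mathsf Z}$, Fubini, \eqref{exp} for the first claim, and stochastic domination by a Poisson process of rate $c_1$ (Lemma~\ref{lem:sd}) for the second and third. The only cosmetic difference is that for \eqref{incl1'} you invoke \eqref{incl1} directly rather than the Poisson domination, but this yields the same bound and is equally valid.
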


\begin{proof}
All three claims are shown by using independence of $\eta^{\mathsf Y}$ and $\sigma^{\mathsf Z}$ and applying Fubini's theorem. To show (\ref{exp'}), note that
\begin{align*}
E[ \eta^{\mathsf Y}_{\sigma^{\mathsf Z}_1}] = E \bigl[ E^G[ \eta^{\mathsf Y}_t ] \bigl|_{t=\sigma^{\mathsf Z}_1} \bigr] \stackrel{(\ref{exp})}{=} E [ \sigma^{\mathsf Z}_1 ] w(G) /|G| = w(G)/|G|.
\end{align*}
The statements (\ref{incl1'}) and (\ref{incvar'}) are shown similarly, using additionally stochastic domination of $\eta^{\mathsf Y}_t$ by a Poisson random variable of parameter $c_1t$ (cf.~(\ref{sd2})).
\end{proof}

We now come to the two main results of this section. As announced, we now analyze the asymptotic behavior of $\eta^{\mathsf X}_{\alpha |G|^2}$, where the whole difficulty comes from the component $\eta^{\mathsf Y}_{\alpha |G|^2}$. The method we use is to split the time interval $[0, \alpha |G|^2]$ into $[|G|^{\epsilon/2}]$ increments of length longer than $\lambda_N^{-1}$. This is possible by \ref{hyp:mix} and ensures that the bound from (\ref{inccov}) on the covariance between different increments of $\eta^{\mathsf Y}$ becomes useful for non-adjacent increments. The following lemma follows from the second moment Chebyshev inequality and the covariance bound applied to pairs of non-adjacent increments.

\begin{lemma}\label{lem:erg}
Assuming \ref{hyp:w} and (\ref{hyp:d}),
\begin{align}
\lim_N E \bigl[ |\eta^{\mathsf X}_{\alpha |G|^2}/(\alpha |G|^2) - (1+\beta)| \wedge 1 \bigr] = 0, \textrm{ for } \alpha>0. \label{erg}
\end{align}
\end{lemma}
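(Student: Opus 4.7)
The plan is to split $\eta^{\mathsf X} = \eta^{\mathsf Y} + \eta^{\mathsf Z}$ and treat the two summands separately. The process $\eta^{\mathsf Z}$ is a rate-$1$ Poisson process under $P$, so $\eta^{\mathsf Z}_{\alpha|G|^2}/(\alpha|G|^2) \to 1$ in $L^2$ by the elementary variance computation $\textup{var}(\eta^{\mathsf Z}_T) = T$. Since the $\mathsf Y$-marginal of $P$ is exactly $P^G$, the problem reduces to showing $\eta^{\mathsf Y}_{\alpha|G|^2}/(\alpha|G|^2) \to \beta$ in probability under $P^G$. By (\ref{exp}) the mean is $w(G)/|G|$, which tends to $\beta$ by (\ref{hyp:d}), so only a variance estimate is needed.

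To control the variance, I would chop $[0,\alpha|G|^2]$ into $K_N := [|G|^{\epsilon/2}]$ blocks of equal length $\Delta_N := \alpha|G|^2/K_N \sim \alpha|G|^{2-\epsilon/2}$, handling the rounding remainder as a lower order term, and write the centered sum as $\sum_{j=1}^{K_N} I_j$ with $I_j := I^{\mathsf Y}_{(j-1)\Delta_N,\, j\Delta_N}$ defined in (\ref{def:Yinc}). Then I expand $\textup{var}_{P^G}(\sum_j I_j)$ as a double sum of covariances and bound it in three groups. The diagonal pieces contribute at most $K_N(c_1\Delta_N + c_1^2\Delta_N^2)$ by (\ref{incvar}), and the $|j-k|=1$ pairs contribute at most the same by Cauchy--Schwarz; together this is of order $\alpha|G|^2(1+\Delta_N)$, which is $o(|G|^4)$ because $\Delta_N = o(|G|^2)$.

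For the off-diagonal pairs with $|j-k|\geq 2$, the estimate (\ref{inccov}) gives $|\textup{cov}_{P^G}(I_j,I_k)| \leq c_1^2 \Delta_N^2 |G| \exp\{-(|j-k|-1)\Delta_N \lambda_N\}$. By \ref{hyp:mix}, $\Delta_N \lambda_N \geq \alpha |G|^{\epsilon/2}\to\infty$, so summing the resulting geometric series in $|j-k|$ and then over $j$ yields a total contribution of order $K_N \Delta_N^2 |G|\, e^{-c|G|^{\epsilon/2}}$; since the exponential decays faster than any polynomial, this is $o(|G|^4)$ as well. Dividing the combined variance bound by $(\alpha|G|^2)^2$ and applying Chebyshev's inequality produces convergence in probability, and because the integrand in (\ref{erg}) is bounded by $1$, dominated convergence upgrades this to the stated $L^1$ convergence.

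The main obstacle is to fix $K_N$ in the right window: it must be large enough that $\Delta_N \lambda_N$ diverges polynomially (so that the exponential factor in (\ref{inccov}) dominates the $|G|$ prefactor), yet small enough that the diagonal contribution $K_N\Delta_N^2 = \alpha|G|^2\Delta_N$ stays $o(|G|^4)$. Assumption \ref{hyp:mix}, ruling out nearly one-dimensional bases, is precisely what makes this window non-empty: without a polynomial lower bound on the spectral gap one would be unable to separate the mixing scale $\lambda_N^{-1}$ from the total time $|G|^2$ with room to spare. The choice $K_N = [|G|^{\epsilon/2}]$ exploits the full margin $|G|^\epsilon$ afforded by \ref{hyp:mix}, and this is where the spectral gap hypothesis decisively enters.
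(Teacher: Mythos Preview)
Your proposal is correct and follows essentially the same approach as the paper: reduce to the $\eta^{\mathsf Y}$ part via (\ref{exp}) and (\ref{hyp:d}), chop $[0,\alpha|G|^2]$ into $[|G|^{\epsilon/2}]$ blocks, and control the variance of the centered sum using (\ref{incvar}) on-diagonal and (\ref{inccov}) off-diagonal together with \ref{hyp:mix}. The only cosmetic difference is that the paper splits the increments into even- and odd-indexed sums $\Sigma_1,\Sigma_2$ so that every pair within a sum automatically has a time gap of at least one full block (making (\ref{inccov}) directly applicable), whereas you keep a single sum and dispose of the adjacent pairs $|j-k|=1$ by Cauchy--Schwarz and (\ref{incvar}); both devices give the same $O(|G|^{4-\epsilon/2})$ bound and the rest of the argument is identical.
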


\begin{proof}
The law of large numbers implies that $\eta^{\mathsf Z}_{\alpha |G|^2}/(\alpha |G|^2)$ converges to $1$, $P^{\mathbb Z}_0$-a.s. (see, for example \cite{durrett}, Chapter 1, Theorem 7.3). Moreover, $\lim_N w(G)/|G| = \beta$ by (\ref{hyp:d}). Since $\eta^{\mathsf X} = \eta^{\mathsf Y} + \eta^{\mathsf Z}$, it hence suffices to show that
\begin{align}
\lim_N E^G \bigl[ (|\eta^{\mathsf Y}_{\alpha |G|^2}/(\alpha |G|^2) - w(G)/|G||) \wedge 1 \bigr] = 0. \label{eq:erg1}
\end{align}
To this end, put $a = [|G|^{\epsilon/2}]$, $\tau= \alpha |G|^2 / a$, and write 
\begin{align}
&\eta^{\mathsf Y}_{\alpha |G|^2} -  \alpha |G|^2 (w(G)/|G|)  = \sum_{\substack{1 \leq n \leq a, \\ n \textrm{ even}}} I^{\mathsf Y}_{(n-1)\tau,n\tau} + \sum_{\substack{1 \leq n \leq a, \\ n \textrm{ odd}}} I^{\mathsf Y}_{(n-1)\tau,n\tau} \stackrel{\textrm{(def.)}}{=} \Sigma_1 + \Sigma_2, \label{eq:erg2} 
\end{align}
for $I^{\mathsf Y}$ as in (\ref{def:Yinc}). Fix any $\delta > 0$ and $\Sigma \in \{\Sigma_1, \Sigma_2\}$. By Chebyshev's inequality,
\begin{align}
P^G [ |\Sigma| \geq \delta \alpha |G|^2 ] &\leq \frac{1}{\delta^2 \alpha^2 |G|^4} E^G [ \Sigma^2] \label{eq:erg3}\\
&= \frac{1}{\delta^2 \alpha^2 |G|^4} \Bigl( \sum_i E^G[(I^{\mathsf Y}_{(i-1)\tau,i\tau})^2] + \sum_{i \neq j} E^G [I^{\mathsf Y}_{(i-1)\tau,i\tau} I^{\mathsf Y}_{(j-1)\tau,j\tau}] \Bigr), \nonumber
\end{align}
where the two sums are over unordered indices $i$ and $j$ in $\{1, \ldots, a\}$ that are either all even or all odd, depending on whether $\Sigma$ is equal to $\Sigma_1$ or to $\Sigma_2$. The right-hand side of (\ref{eq:erg3}) can now be bounded with the help of the estimates on the increments of $\eta^{\mathsf Y}$ in Lemma~\ref{lem:inc}. Indeed, with (\ref{incvar}), the first sum is bounded by $ca \tau^2 \leq c(\alpha) |G|^{4-\epsilon/2}$. For the second sum, we observe that $|i-j| \geq 2$ for all indices $i$ and $j$, apply (\ref{inccov}) and \ref{hyp:mix} and bound the sum with $(|G| \tau)^c \exp \{-c(\alpha) \tau \lambda_N \} \leq |G|^c \exp \{-c(\alpha) |G|^{\epsilon/2} \}$. Hence we find that
\begin{align*}
P^G [ |\Sigma| \geq \delta \alpha |G|^2 ] \leq c(\alpha,\delta) (|G|^{-\epsilon/2}+|G|^c \exp \{-c(\alpha) |G|^{\epsilon/2} \}) \to 0, \textrm{ as } N \to \infty,
\end{align*}
from which we deduce with (\ref{eq:erg2}) that for our arbitrarily chosen $\delta > 0$,
\begin{align*}
P^G \bigl[ |\eta^{\mathsf Y}_{\alpha |G|^2}/(\alpha |G|^2) - w/|G| | \geq 2 \delta \bigr] \leq P^G [ |\Sigma_1| \geq \delta \alpha |G|^2 ]  +  P^G [ |\Sigma_2| \geq \delta \alpha |G|^2 ] \to 0,
\end{align*}
as $N$ tends to infinity, showing (\ref{eq:erg1}). This completes the proof of Lemma~\ref{lem:erg}.
\end{proof}

In the final lemma of this section, we apply a similar analysis to the local time of the process $\pi_{\mathbb Z}(X)$ evaluated at time $\eta^{\mathsf X}_{\alpha |G|^2}$. The proof is similar to the preceding argument, although the appearance of $\eta^{\mathsf Y}$ evaluated at the random times $\sigma^{\mathsf Z}_n$ complicates matters. We recall the notation $L$ and $\hat L$ for the local times of $\pi_{\mathbb Z}(X)$ and $Z$ from (\ref{def:locpr}) and (\ref{def:loct}).

\begin{lemma} \label{lem:loct}
Assuming \ref{hyp:w}, \ref{hyp:mix} and (\ref{hyp:d}),
\begin{align}
&\lim_N \sup_{z \in {\mathbb Z}} E \bigl[ (|L^z_{\eta^{\mathsf X}_{\alpha |G|^2}} - (1+\beta) {\hat L}^z_{\eta^{\mathsf Z}_{\alpha |G|^2}}|/|G|) \wedge 1 \bigr] = 0, \textrm{ for } \alpha >0. \label{loct} 
\end{align}
\end{lemma}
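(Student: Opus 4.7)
The plan is to recognise both $L^z_{\eta^{\mathsf X}_T}$ and $(1+\beta)\hat L^z_{\eta^{\mathsf Z}_T}$ (with $T:=\alpha|G_N|^2$) as proxies for the continuous-time local time $\mathsf L^z_T$ and to control the resulting discrepancies. First I would use that $L^z_{\eta^{\mathsf X}_T}$ counts the $\mathsf X$-jumps in $[0,T]$ whose post-jump $\mathbb Z$-coordinate equals $z$, and split $d\eta^{\mathsf X} = d\eta^{\mathsf Y} + d\eta^{\mathsf Z}$, the $d\eta^{\mathsf Z}$-piece reducing to $\hat L^z_{\eta^{\mathsf Z}_T}$ by right-continuity of $\mathsf Z$ at its own jumps, up to an $O(1)$ boundary term. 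This yields the identity
\begin{align*}
L^z_{\eta^{\mathsf X}_T} - (1+\beta)\hat L^z_{\eta^{\mathsf Z}_T} = V + \bigl(w(G)/|G| - \beta\bigr)\mathsf L^z_T + \beta\bigl(\mathsf L^z_T - \hat L^z_{\eta^{\mathsf Z}_T}\bigr) + O(1),
\end{align*}
where $V := \int_0^T \mathbf 1_{\{\mathsf Z_u = z\}}(d\eta^{\mathsf Y}_u - (w(G)/|G|)\,du)$.

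The two middle terms divided by $|G|$ will vanish uniformly in $z$ in the $\wedge 1$ sense: for the first, combine (\ref{hyp:d}) with the uniform-in-$z$ bound $E[\mathsf L^z_T]/|G| \leq c(\alpha)$ obtained from the heat-kernel estimate $P[\mathsf Z_u=z]\leq c/\sqrt{u+1}$ (or from (\ref{loc1}) and (\ref{loct2})); for the second, split $\mathsf L^z_T - \hat L^z_{\eta^{\mathsf Z}_T} = (\mathsf L^z_T - \hat L^z_{[T]}) + (\hat L^z_{[T]} - \hat L^z_{\eta^{\mathsf Z}_T})$, apply (\ref{loc1}) to the first piece, and for the second use Fubini together with $|[T] - \eta^{\mathsf Z}_T|=O(\sqrt T)$ in $L^1$ and $\sup_k P[Z_k = z]=O(1/\sqrt T)$ in the range $k \sim T$, yielding an $L^1$-bound of $O(1)$.

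The main task, and the hard part, is to show $\mathrm{Var}(V) = o(|G|^2)$, which I would establish by the blocking strategy of Lemma~\ref{lem:erg}. Partition $[0,T]$ into $a = [|G|^{\epsilon/2}]$ blocks $B_i$ of length $\tau = T/a$ so that $\tau\lambda_N \geq |G|^{\epsilon/2}$, and write $V = \sum_i U_i$ with $U_i = \int_{B_i}\mathbf 1(d\eta^{\mathsf Y} - (w/|G|)\,du)$; by independence of $\mathsf Y$ and $\mathsf Z$ and (\ref{exp}), $E[U_i\mid\mathsf Z] = 0$. Next I would decompose $U_i$ through the Dynkin martingale $M_t := \eta^{\mathsf Y}_t - \int_0^t w(\mathsf Y_s)\,ds$ as $U_i = \int_{B_i}\mathbf 1\,dM + \int_{B_i}\mathbf 1(w(\mathsf Y)-w/|G|)\,du$, bound the conditional variance of the jump-noise piece by $cE^G[\int_{B_i}\mathbf 1\,w(\mathsf Y)du] \leq c\,\mathsf L^z_{B_i}$ via the quadratic-variation identity $d\langle M\rangle = w(\mathsf Y)\,du$, and bound the drift piece using the $L^2$-spectral-gap covariance estimate $|\mathrm{cov}(w(\mathsf Y_u),w(\mathsf Y_v))| \leq c\,e^{-\lambda_N|u-v|}$ (deducible from (\ref{st}) with $\|w-\mu(w)\|_{L^2(\mu)}^2\leq c$):
\begin{align*}
\mathrm{Var}\Bigl(\int_{B_i}\mathbf 1(w(\mathsf Y)-w/|G|)\,du \,\Big|\, \mathsf Z\Bigr) \leq c\iint_{B_i}\mathbf 1\mathbf 1\,e^{-\lambda_N|u-v|}\,du\,dv \leq c(\mathsf L^z_{B_i})^2,
\end{align*}
the last step crucially using $\mathsf L^z_{B_i} \leq \sqrt\tau \ll \lambda_N^{-1}$ so that the $|A|^2$ bound beats $|A|/\lambda_N$. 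Cross-covariances between non-adjacent blocks decay like $\exp\{-|G|^{\epsilon/2}\}$ via an (\ref{inccov})-style Markov argument and are negligible, while adjacent-block cross-covariances are absorbed through Cauchy-Schwarz.

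Finally, the uniform heat-kernel bound $P[\mathsf Z_u = z, \mathsf Z_v = z] \leq c/(\sqrt{u+1}\sqrt{|u-v|+1})$ gives $E[(\mathsf L^z_{B_i})^2] \leq c\tau^{3/2}/\sqrt{(i-1)\tau+1}$, and summing over $i$ yields $E[\sum_i (\mathsf L^z_{B_i})^2] \leq c\tau\sqrt a = c|G|^{2-\epsilon/4}$; combined with $E[\sum\mathsf L^z_{B_i}] = E[\mathsf L^z_T]\leq c|G|$, this gives $\mathrm{Var}(V) \leq c|G|^{2-\epsilon/4}$, so $E[(V/|G|)^2]\leq c|G|^{-\epsilon/4}$ and hence $E[|V|/|G|\wedge 1] \to 0$ uniformly in $z$. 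The crucial subtlety is that a naïve global spectral-gap estimate would only yield $\mathrm{Var}(V) \leq c\mathsf L^z_T/\lambda_N = O(|G|^{3-\epsilon})$, which is too weak for any $\epsilon<1$; the blocking is essential precisely because the local-time set $\{u:\mathsf Z_u=z\}$ is fragmented across blocks, with each fragment short enough to activate the sharper quadratic bound per block.
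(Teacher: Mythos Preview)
Your argument is correct, and runs parallel to the paper's proof while using a genuinely different parametrisation and decomposition. The paper indexes everything by the jump times $\sigma^{\mathsf Z}_n$ of $\mathsf Z$: it writes $L^z_{\eta^{\mathsf X}_T}\approx\sum_{n<\eta^{\mathsf Z}_T}\mathbf 1_{\{Z_n=z\}}(1+\eta^{\mathsf Y}_{\sigma^{\mathsf Z}_{n+1}}-\eta^{\mathsf Y}_{\sigma^{\mathsf Z}_n})$, reduces to the centred sum $\sum_{n\leq[T]}\mathbf 1_{\{Z_n=z\}}S_n$ with $S_n=\eta^{\mathsf Y}_{\sigma^{\mathsf Z}_{n+1}}-\eta^{\mathsf Y}_{\sigma^{\mathsf Z}_n}-w(G)/|G|$, and then expands the square into diagonal, near-diagonal ($|n'-n|\leq b=[|G|^{2-\epsilon/2}]$), and far-diagonal pieces, handling the last via the covariance bound (\ref{inccov}) and Fubini over $\sigma^{\mathsf Z}$. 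Your approach works directly in continuous time, identifies the core quantity as the stochastic integral $V=\int_0^T\mathbf 1_{\{\mathsf Z=z\}}(d\eta^{\mathsf Y}-(w/|G|)\,du)$, and controls it by the Dynkin martingale decomposition $d\eta^{\mathsf Y}=dM+w(\mathsf Y)\,du$ together with a blocking into $[|G|^{\epsilon/2}]$ pieces. The two routes are essentially dual: your blocks of length $\tau\sim|G|^{2-\epsilon/2}$ match the paper's near-diagonal window $b$, your martingale orthogonality replaces the paper's diagonal bound (\ref{eq:loct4}), your within-block drift bound $(\mathsf L^z_{B_i})^2$ plays the role of the near-diagonal estimate (\ref{eq:loct5}), and both arrive at the same exponent $|G|^{2-\epsilon/4}$. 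What your route buys is a cleaner separation of the ``Poisson noise'' from the ``drift fluctuation''; what the paper's buys is that it avoids stochastic calculus entirely and stays with elementary discrete sums.

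Two minor expository points, neither a gap: the claim that $\hat L^z_{[T]}-\hat L^z_{\eta^{\mathsf Z}_T}$ has $L^1$-bound $O(1)$ is slightly optimistic (the argument you sketch actually gives $O(T^{1/4})$ after restricting to $|\eta^{\mathsf Z}_T-T|\leq T^{3/4}$), but $o(|G|)$ is all you need. And the justification ``$\mathsf L^z_{B_i}\leq\sqrt\tau$'' is not a pathwise bound---the local time in a block can be as large as $\tau$---but you do not use it as such: the inequality $\iint_{B_i}\mathbf 1\mathbf 1\,e^{-\lambda_N|u-v|}\,du\,dv\leq(\mathsf L^z_{B_i})^2$ follows simply from $e^{-\lambda_N|u-v|}\leq 1$, and it is the moment bound $E[(\mathsf L^z_{B_i})^2]\leq c\tau^{3/2}/\sqrt{(i-1)\tau+1}$ that makes the sum small. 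The heuristic is right, the stated inequality is not, but the proof is unaffected.
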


\begin{proof}
Set $T = \alpha |G|^2$. By independence of $\eta^{\mathsf Z}$ and $Z$, we have
\begin{align*}
E[{\hat L}_{\eta^{\mathsf Z}_T}] = E \bigl[ \sum_{n \geq 0} \mathbf{1}_{\{ n<\eta^{\mathsf Z}_T \}} P^{\mathbb Z}_0 [Z_n=z] \bigr] \stackrel{(\ref{eq:zk2})}{\leq} c E \Bigl[ \sqrt{\eta^{\mathsf Z}_T} \Bigr] \stackrel{(\textrm{Jensen})}{\leq} c(\alpha) |G|.
\end{align*}
From this estimate and the assumption $w(G)/|G| \to \beta$ made in (\ref{hyp:d}), it follows that it suffices to prove (\ref{loct}) with $w(G)/|G|$ in place of $\beta$.
It follows from the definition of $L^z$ in (\ref{def:locpr}) that
\begin{align}
&\sum_{n=0}^{\eta^{\mathsf Z}_T -1} \mathbf{1}_{\{Z_n = z\}} (1+ \eta^{\mathsf Y}_{\sigma^{\mathsf Z}_{n+1}} - \eta^{\mathsf Y}_{\sigma^{\mathsf Z}_n}) \leq L^z_{\eta^{\mathsf X}_T} \leq  \sum_{n=0}^{\eta^{\mathsf Z}_T} \mathbf{1}_{\{Z_n = z\}} (1+ \eta^{\mathsf Y}_{\sigma^{\mathsf Z}_{n+1}} - \eta^{\mathsf Y}_{\sigma^{\mathsf Z}_n}), \textrm{ hence} \nonumber\\
&\sup_{z \in {\mathbb Z}} E \Bigl[ \Bigl| L^z_{\eta^{\mathsf X}_T} - \sum_{n=0}^{\eta^{\mathsf Z}_T -1 } \mathbf{1}_{\{Z_n = z\}} (1+ \eta^{\mathsf Y}_{\sigma^{\mathsf Z}_{n+1}} - \eta^{\mathsf Y}_{\sigma^{\mathsf Z}_n}) \Bigr| \Bigr] \leq 1+ E[ \eta^{\mathsf Y}_{\sigma^{\mathsf Z}_{\eta^{\mathsf Z}_T+1}} - \eta^{\mathsf Y}_{\sigma^{\mathsf Z}_{\eta^{\mathsf Z}_T}}]. \label{eq:loct1}
\end{align}
By independence of $\eta^{\mathsf Y}$ and $(\sigma^{\mathsf Z},\eta^{\mathsf Z})$ and the simple Markov property (under $P^G$) applied at time $\sigma^{\mathsf Z}_{\eta^{\mathsf Z}_T}$, the expectation on the right-hand side is with (\ref{incl1}) bounded by $cE[\sigma^{\mathsf Z}_{\eta^{\mathsf Z}_T+1} - \sigma^{\mathsf Z}_{\eta^{\mathsf Z}_T}]$. This last expectation is equal to the sum of two independent $\exp(1)$-distributed random variables, so it follows that the right-hand side of (\ref{eq:loct1}) is bounded by a constant. By these observations, the proof will be complete once we show that
\begin{align}
&\lim_N \sup_{z \in {\mathbb Z}} E \Bigl[ \Bigl( \Bigl| \sum_{n=0}^{\eta^{\mathsf Z}_T -1} \mathbf{1}_{\{Z_n=z\}} S_n \Bigr| / |G| \Bigr) \wedge 1 \Bigr] =0, \textrm{ where} \label{eq:loct2}\\
&S_n = \eta^{\mathsf Y}_{\sigma^{\mathsf Z}_{n+1}} - \eta^{\mathsf Y}_{\sigma^{\mathsf Z}_n} - w(G)/|G|, \textrm{ for } n \geq 0. \nonumber
\end{align}
To this end, we will prove that
\begin{align}
 &\lim_N \sup_{z \in {\mathbb Z}} E \Bigl[ \Bigl( \Bigl| \sum_{n=0}^{\eta^{\mathsf Z}_T -1} \mathbf{1}_{\{Z_n=z\}} S_n   - \sum_{n=0}^{[T]} \mathbf{1}_{\{Z_n=z\}} S_n\Bigr| / |G| \Bigr) \wedge 1 \Bigr] =0, \textrm{ and } \label{eq:loct2.1}\\
 & \lim_N \sup_{z \in {\mathbb Z}} E \Bigl[ \Bigl| \sum_{n=0}^{[T]} \mathbf{1}_{\{Z_n=z\}} S_n \Bigr| / |G| \Bigr] =0. \label{eq:loct2.2}
\end{align}
In order to show (\ref{eq:loct2.1}), we note that by the Chebyshev inequality,
\begin{align}
 P[|\eta^{\mathsf Z}_T - T| \geq T^{3/4} ] \leq cT^{-3/2} E[(\eta^{\mathsf Z}_T-T)^2] = T^{-1/2}. \label{eq:loct2.3}
\end{align}
The expectation in (\ref{eq:loct2.1}), taken on the complement of the event $\{ |\eta^{\mathsf Z}_T - T| \geq T^{3/4} \}$, is bounded by
\begin{align}
\frac{1}{|G|} \sum_{T-cT^{3/4} \leq n \leq T+ cT^{3/4}} E[ \mathbf{1}_{\{Z_n=z\}} |S_n|]. \label{eq:loct2.4}
\end{align}
Using independence of $Z$ and $\eta^{\mathsf Y}_{\sigma^{\mathsf Z}_.}$ and the heat-kernel bound (\ref{eq:zk2}), we find that the last expectation is bounded by $cE[|S_n|]/\sqrt{n}$, which by the strong Markov property applied at time $\sigma^{\mathsf Z}_n$, (\ref{exp'}) and \ref{hyp:w} is bounded by $c/\sqrt{n}$. The expression in (\ref{eq:loct2.4}) is thus bounded by $c T^{3/8}/|G| = c \alpha |G|^{-1/4}$ and with (\ref{eq:loct2.3}), we have proved (\ref{eq:loct2.1}).

We now come to (\ref{eq:loct2.2}). By the Cauchy-Schwarz inequality, we have for all $z \in {\mathbb Z}$,
\begin{align}
E \Bigl[ \Bigl| \sum_{n=0}^{[T]} \mathbf{1}_{\{Z_n=z\}} S_n \Bigr| / |G| \Bigr]^2 \leq \frac{1}{|G|^2} E \Bigl[ \Bigl| \sum_{n=0}^{[T]} \mathbf{1}_{\{Z_n=z\}} S_n \Bigr|^2 \Bigr]. \label{eq:loct2.5}
\end{align}
We will now expand the square and respectively sum over identical indices, indices of distance at most $[|G|^{2-\epsilon/2}]$, indices of distance greater than $[|G|^{2-\epsilon/2}]$. Proceeding in this fashion, the right-hand side of (\ref{eq:loct2.5}) equals
\begin{align}
& \frac{1}{|G|^2} \Biggl( \sum_{0 \leq n \leq T} E \bigl[Z_n=z, S_n^2 \bigr] + 2  \sum_{0 \leq n < n' \leq (n+b) \wedge [T]} E \bigl[ Z_n=Z_{n'}=z, S_n S_{n'} \bigr] \label{eq:loct3}\\
&\qquad  + 2 \sum_{0 \leq n, \, n+b < n' \leq [T]}  E \bigl[ Z_n=Z_{n'}=z, S_n S_{n'}  \bigr] \Biggr) , \textrm{ where } b = [|G|^{2-\epsilon/2}]. \nonumber
\end{align}
We now treat each of these three sums separately, starting with the first one. By the strong Markov property, (\ref{incvar'}) and \ref{hyp:w},
\begin{align}
\sum_{0 \leq n \leq [T]} E \bigl[Z_n=z, S_n^2 \bigr] &= \sum_{0 \leq n \leq [T]} E \bigl[Z_n=z, E_{{\mathsf X}_{\sigma^{\mathsf Z}_n}} [S_0^2] \bigr]  \leq c\sum_{0 \leq n \leq [T]} P[Z_n=z]. \label{eq:loct3.1}
\end{align}
By the heat-kernel bound (\ref{eq:zk2}), this last sum is bounded by $\sum_n c/{\sqrt{n}} \leq c \sqrt{T}$. We have thus found that
\begin{align}
\sum_{0 \leq n \leq [T]} E \bigl[Z_n=z, S_n^2 \bigr] \leq c(\alpha) |G|. \label{eq:loct4}
\end{align}
For the second sum in (\ref{eq:loct3}), we proceed in a similar fashion. The strong Markov property applied at time $\sigma^{\mathsf Z}_{n'} \geq \sigma^{\mathsf Z}_{n+1}$ and the estimate (\ref{incl1'}) together yield
\begin{align*}
&\sum_{0 \leq n < n' \leq (n+b) \wedge [T]} E \bigl[ Z_n=Z_{n'}=z, S_n S_{n'} \bigr]  = \sum_{n,n'} E \bigl[ Z_n=Z_{n'}=z, S_n E_{{\mathsf X}_{\sigma^{\mathsf Z}_{n'}}}[S_0] \bigr] \Bigr| \\
&\qquad \leq c \sum_{0 \leq n \leq [T]} E \Bigl[Z_n=z, |S_n| \sum_{n'=n+1}^{n+b} \mathbf{1}_{\{Z_{n'}=z\}} \Bigr].
\end{align*}
Applying the strong Markov property at time $\sigma^{\mathsf Z}_{n+1}$, we bound the right-hand side by
\begin{align*}
&c \sum_{0 \leq n \leq [T]} \Bigl( E [Z_n=z, |S_n|] \sum_{n'=0}^{b-1} \sup_{z' \in {\mathbb Z}} P^{\mathbb Z}_{z'} [Z_{n'}=z] \Bigr) \stackrel{(\ref{eq:zk2})}{\leq} c \sqrt{b} \sum_{0 \leq n \leq [T]}  E [Z_n=z, |S_n|].
\end{align*}
The sum on the right-hand side can be bounded by $c(\alpha) |G|$ with the same arguments as in (\ref{eq:loct3.1})-(\ref{eq:loct4}), the only difference being the use of the estimate (\ref{incl1'}) rather than (\ref{incvar'}). Inserting the definition of $b$ from (\ref{eq:loct3}), we then obtain
\begin{align}
\sum_{0 \leq n < n' \leq n+b} E \bigl[ Z_n=Z_{n'}=z, S_n S_{n'} \bigr]  \leq c(\alpha) |G|^{2-\epsilon/4}. \label{eq:loct5}
\end{align}
For the expectation in the third sum in (\ref{eq:loct3}),we first use independence of $Z$ and $S_.$, then \eqref{exp'} and the fact that the process $\sigma^{\mathsf Z}$ has iid $\exp(1)$-distributed increments for the second line and thus obtain
\begin{align*}
&\bigl|E \bigl[ Z_n=Z_{n'}=z, S_n S_{n'}  \bigr] \bigr| = P[Z_n=Z_{n'}=z] \bigl|E \bigl[ S_n S_{n'}  \bigr] \bigr| \leq   \bigl|E \bigl[ S_n S_{n'}  \bigr] \bigr|   \\
&\qquad = \Bigl| E \bigl[ (\eta^{\mathsf Y}_{\sigma^{\mathsf Z}_{n+1}} - \eta^{\mathsf Y}_{\sigma^{\mathsf Z}_n}) (\eta^{\mathsf Y}_{\sigma^{\mathsf Z}_{n'+1}} - \eta^{\mathsf Y}_{\sigma^{\mathsf Z}_{n'}})] - \frac{w(G)^2}{|G|^2} E[(\sigma^{\mathsf Z}_{n+1} - \sigma^{\mathsf Z}_n) (\sigma^{\mathsf Z}_{n'+1} - \sigma^{\mathsf Z}_{n'}) \bigr] \Bigr|.
\end{align*}
Independence of $\eta^{\mathsf Y}$ and $\sigma^{\mathsf Z}$ and an application of Fubini's theorem then allows to bound the the third sum in (\ref{eq:loct3}) by 
\begin{align*}
\sum_{0 \leq n, \, n+b < n' \leq [T]}   \bigl| E^{\mathbb Z}_0 [ h(\sigma^{\mathsf Z}_n,\sigma^{\mathsf Z}_{n+1},\sigma^{\mathsf Z}_{n'},\sigma^{\mathsf Z}_{n'+1})  ] \bigr|, \textrm{ where } h(s,t,s',t') = \textup{cov}_{P^G}(\eta^{\mathsf Y}_t - \eta^{\mathsf Y}_s, \eta^{\mathsf Y}_{t'}- \eta^{\mathsf Y}_{s'}).
\end{align*}
Via the estimate (\ref{inccov}) on the covariance, this expression is bounded by
\begin{align*}
c|G| \sum_{0 \leq n, \, n+b < n' \leq [T]}  E^{\mathbb Z}_0 \bigl[ (\sigma^{\mathsf Z}_{n+1} - \sigma^{\mathsf Z}_n) (\sigma^{\mathsf Z}_{n'+1}-\sigma^{\mathsf Z}_{n'}) \exp\{ - (\sigma^{\mathsf Z}_{n'} - \sigma^{\mathsf Z}_{n+1}) \lambda_N\} \bigr].
\end{align*}
Since the process $\sigma^{\mathsf Z}$ has iid $\exp(1)$-distributed increments, this sum can be simplified to
\begin{align*}
&\sum_{0 \leq n, \, n+b < n' \leq [T]} E \bigl[ \exp\{ - \sigma^{\mathsf Z}_1 \lambda_N\} \bigr]^{n'-n-1} \leq \sum_{ 0 \leq n \leq [T]} \sum_{ n' > n+b}   \Bigl( \frac{1}{1+\lambda_N} \Bigr)^{n'-n-1} \\
& \quad = [T] \frac{1+\lambda_N}{\lambda_N} \Bigl(\frac{1}{1+\lambda_N} \Bigr)^b \leq c(\alpha) |G|^c e^{-cb \lambda_N} \leq c(\alpha) |G|^c \exp\{- c|G|^{\epsilon/2}\}, \textrm{ by } \ref{hyp:mix}.
\end{align*}
Combining this bound on the third sum in (\ref{eq:loct3}) with the bounds (\ref{eq:loct4}) and (\ref{eq:loct5}) on the first and second sums, we have shown (\ref{eq:loct2.2}), hence (\ref{eq:loct2}). This completes the proof of Lemma~\ref{lem:loct}.
\end{proof}

\section{Proof of the result in discrete time} \label{sec:proof}

In this section, we prove Theorem~\ref{thm:d}. We assume that \ref{hyp:w}-\ref{hyp:k} and (\ref{hyp:d}) hold. The proof uses the estimates of the previous section to deduce Theorem~\ref{thm:d} from the continuous-time version stated in Theorem~\ref{thm:c}.

\begin{proof}[Proof of Theorem~\ref{thm:d}.]
The transience of the graphs ${\mathbb G}_m \times {\mathbb Z}$ follows from Theorem~\ref{thm:c}. Consider again finite subsets ${\mathbb V}_m$ of ${\mathbb G}_m \times {\mathbb Z}$, $1 \leq m \leq M$ and set $V_m = \Phi^{-1}_m ({\mathbb V}_m)$.
We show that for $\theta_m \in {\mathbb R}_+$, $\alpha > 0$,
\begin{align}
&\lim_N E \Bigl[ \prod_{1 \leq m \leq M} \mathbf{1}_{\{H_{V_m} > T\}} \exp \Bigl\{-  \frac{\theta_m }{|G|}  L^{z_m}_{T} \Bigr\} \Bigr] = B(\alpha), \textrm{ where } T = \alpha |G|^2 \textrm{ and} \label{eq:d1}\\
&B(\alpha)= E^W \Bigl[ \exp \Bigl\{ - \sum_{1 \leq m \leq M} L(v_m, \alpha/(1+\beta)) (\textup{cap}^m({\mathbb      V}_m) + (1+\beta) \theta_m) \Bigr\} \Bigr]. \nonumber
\end{align}
This implies Theorem~\ref{thm:d}, by the standard arguments described below (\ref{eq:A}). Recall that two sequences are said to be limit equivalent if their difference tends to $0$ as $N$ tends to infinity. 
If we apply Theorem~\ref{thm:c} with $\alpha/(1+\beta)$ in place of $\alpha$, we obtain
\begin{align*}
\lim_N E \Bigl[ \prod_{1 \leq m \leq M} \mathbf{1}_{\{H_{V_m} > \eta^{\mathsf X}_{T/(1+\beta)}\}} \exp \Bigl\{-  \frac{\theta_m (1+\beta)}{|G|}  {\mathsf L}^{z_m}_{T/(1+\beta)} \Bigr\} \Bigr] = B(\alpha).
\end{align*}
By (\ref{loc1}), the expression on the left-hand side is limit equivalent to the same expression with $\mathsf L$ replaced by ${\hat L}$. Hence, we have
\begin{align*}
\lim_N E \Bigl[ \prod_{1 \leq m \leq M} \mathbf{1}_{\{H_{V_m} > \eta^{\mathsf X}_{T/(1+\beta)}\}} \exp \Bigl\{-  \frac{\theta_m (1+\beta)}{|G|}  {\hat L}^{z_m}_{T/(1+\beta)} \Bigr\} \Bigr] = B(\alpha).
\end{align*}
By the law of large numbers,
$\lim_N \eta^{\mathsf Z}_{T/(1+\beta)}(T/(1+\beta))^{-1} = 1, \textrm{ } P$-a.s. 
Making use of the monotonicity of the left-hand side in the local time and continuity of $B(.)$, we deduce that
\begin{align*}
\lim_N E \Bigl[ \prod_{1 \leq m \leq M} \mathbf{1}_{\{H_{V_m} > \eta^{\mathsf X}_{T/(1+\beta)}\}} \exp \Bigl\{-  \frac{\theta_m (1+\beta)}{|G|}  {\hat L}^{z_m}_{\eta^{\mathsf Z}_{T/(1+\beta)}} \Bigr\} \Bigr] = B(\alpha).
\end{align*}
The estimate (\ref{loct}) then shows that the expression on the left-hand side is limit equivalent to the same expression with  $(1+\beta){\hat L}^{z_m}_{\eta^{\mathsf Z}_{T/(1+\beta)}}$ replaced by $L^{z_m}_{\eta^{\mathsf X}_{T/(1+\beta)}}$, i.e.
\begin{align*}
\lim_N E \Bigl[ \prod_{1 \leq m \leq M} \mathbf{1}_{\{H_{V_m} > \eta^{\mathsf X}_{T/(1+\beta)}\}} \exp \Bigl\{-  \frac{\theta_m }{|G|}  L^{z_m}_{\eta^{\mathsf X}_{T/(1+\beta)}} \Bigr\} \Bigr] = B(\alpha).
\end{align*}
Applying the estimate (\ref{erg}), with the same monotonicity and continuity arguments as in the beginning of the proof, we can replace $\eta^{\mathsf X}_{T/(1+\beta)}$ by $T$, hence infer that (\ref{eq:d1}) holds.
\end{proof}

\section{Examples} \label{sec:ex}

In this section, we apply Theorem~\ref{thm:d} to three examples of graphs $G$: The $d$-dimensional box of side-length $N$, the Sierpinski graph of depth $N$, and the $d$-ary tree of depth $N$ ($d \geq 2$). In each case, we check assumptions \ref{hyp:w}-\ref{hyp:k}, stated after (\ref{def:spec}). In all examples it is implicitly understood that all edges of the graphs have weight $1/2$. 
We begin with a lemma from \cite{SC} asserting that the continuous-time spectral gap has the same order of magnitude as its discrete-time analog $\lambda^d_N$. This result will be useful for checking \ref{hyp:mix}.

\begin{lemma} \label{lem:eval}
Assume \ref{hyp:w} and let $\lambda^d_N$ bet the smallest non-zero eigenvalue of the matrix $I-P(G)$, where $P(G) = (p^G(y,y'))$ is the transition matrix of $Y$ under $P^G$. Then there are constants $c(c_0,c_1)$, $c'(c_0,c_1) >0$ (cf.~\ref{hyp:w}), such that for all $N$, 
\begin{align}
c(c_0,c_1) \lambda^d_N \leq \lambda_N \leq c'(c_0,c_1) \lambda^d_N. \label{eval}
\end{align}
\end{lemma}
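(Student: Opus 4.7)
The plan is to exploit the variational characterisations of both $\lambda_N$ and $\lambda^d_N$ as Rayleigh quotients, put them in a common form, and then compare numerators and denominators separately, using only the bounds on $w_y$ from \ref{hyp:w}.

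First I would record the standard variational formula for $\lambda^d_N$: since $P(G_N)$ is reversible with respect to $\pi_{G_N}(y) = w_y/w(G_N)$, the matrix $I-P(G_N)$ is self-adjoint on $L^2(\pi_{G_N})$, and its smallest non-zero eigenvalue equals
\[
\lambda^d_N = \min_{f \text{ not constant}} \frac{\langle (I-P(G_N)) f, f\rangle_{\pi_{G_N}}}{\textup{var}_{\pi_{G_N}}(f)} = \min_f \frac{\frac{1}{2} \sum_{y,y'} (f(y)-f(y'))^2\, w_{y,y'}/w(G_N)}{\textup{var}_{\pi_{G_N}}(f)}.
\]
Comparing with (\ref{def:spec}) and (\ref{def:dir}), the numerators of the two Rayleigh quotients defining $\lambda_N$ and $\lambda^d_N$ differ only by the positive multiplicative constant $w(G_N)/|G_N|$, which by \ref{hyp:w} lies in $[c_0,c_1]$. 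Hence the numerators are equivalent up to multiplicative constants depending only on $c_0,c_1$.

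Next I would compare the two denominators. The Radon--Nikodym derivative $\pi_{G_N}(y)/\mu(y) = w_y |G_N|/w(G_N)$ is, thanks to \ref{hyp:w}, pointwise bounded between $c_0/c_1$ and $c_1/c_0$. Using the elementary identity $\textup{var}_\nu(f) = \min_c \sum_y (f(y)-c)^2 \nu(y)$ together with this pointwise comparison yields, for every non-constant $f$,
\[
\frac{c_0}{c_1}\,\textup{var}_\mu(f) \leq \textup{var}_{\pi_{G_N}}(f) \leq \frac{c_1}{c_0}\,\textup{var}_\mu(f).
\]
Substituting the numerator and denominator comparisons into the two Rayleigh quotients and taking the infimum over $f$ then gives
\[
\frac{c_0^2}{c_1}\,\lambda^d_N \leq \lambda_N \leq \frac{c_1^2}{c_0}\,\lambda^d_N,
\]
which is exactly (\ref{eval}) with $c(c_0,c_1)=c_0^2/c_1$ and $c'(c_0,c_1)=c_1^2/c_0$.

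The argument is essentially bookkeeping and there is no conceptual obstacle; the only point to watch is pairing the correct Dirichlet form with the correct reference measure in each eigenvalue characterisation. This same comparison, in a more general setting, is carried out in \cite{SC}, which is what the lemma invokes.
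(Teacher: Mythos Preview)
Your proof is correct and follows essentially the same route as the paper's: both use the variational characterisations of $\lambda_N$ and $\lambda^d_N$, compare the Dirichlet forms via the ratio $w(G_N)/|G_N|\in[c_0,c_1]$, and compare the variances via the pointwise bounds $c_0/c_1 \leq \pi_{G_N}(y)/\mu(y) \leq c_1/c_0$ together with the identity $\textup{var}_\nu(f)=\min_c\sum_y(f(y)-c)^2\nu(y)$. Your version is slightly more explicit in tracking the constants, but the argument is the same.
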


\begin{proof}
We follow arguments contained in \cite{SC}. With the Dirichlet form ${\mathcal D}_\pi(.,.)$ defined as
${\mathcal D}_\pi(f,f) = {\mathcal D}_N(f,f) \frac{|G|}{w(G)},$ for $f: G \to {\mathbb R}$ (cf.~\eqref{def:dir}), one has (cf.~\cite{SC}, Definition 2.1.3, p.~327)
\begin{align}
\lambda^d_N = \min \biggl\{ \frac{{\mathcal D}_\pi (f,f)}{\textup{var}_\pi (f)}: f \textrm{ is not constant} \biggr\}. \label{eq:eval0}
\end{align}
From \ref{hyp:w}, it follows that
\begin{align}
\label{eq:eval1}
\begin{array}{cccccl}
 c_1^{-1} {\mathcal D}_N(f,f) &\leq& {\mathcal D}_\pi(f,f) &\leq& c_0^{-1} {\mathcal D}_N(f,f),& \textrm{for any } f: G \to {\mathbb R}, \textrm{ and }\\
 c_0 c_1^{-1} \mu(y) &\leq& \pi(y) &\leq& c_1 c_0^{-1} \mu(y),& \textrm{for any } y \in G.
\end{array}
\end{align}
Using $\textup{var}_\pi (f) = \inf_{\theta \in {\mathbb R}} \sum_{y \in G} (f(y) - \theta)^2 \pi(y)$ and the analogous statement for $\textup{var}_\mu$, the estimate in the second line implies that
\begin{align}
 \label{eq:eval2} c_0c_1^{-1} \textup{var}_\mu (f) \leq \textup{var}_\pi (f) \leq c_1 c_0^{-1} \textup{var}_\mu (f), \textrm{ for any } f: G \to {\mathbb R}.
\end{align}
Lemma~\ref{lem:eval} then follows by using \eqref{eq:eval1} and \eqref{eq:eval2} to compare the definition \eqref{def:spec} of $\lambda_N$ with the characterization \eqref{eq:eval0} of $\lambda_N^d$.
\end{proof}

The following lemma provides a sufficient criterion for assumption \ref{hyp:k}.
\begin{lemma} \label{lem:kch}
Assuming \ref{hyp:w}-\ref{hyp:trans} and that
\begin{align}
\lim_N \sum_{n=1}^{[\lambda_N^{-1} |G|^\epsilon]} \sup_{\substack{y_0 \in \partial (C_m^c) \\ y \in B(y_m,\rho_0)}} p^G_n(y_0,y) \frac{1}{\sqrt{n}} = 0, \textrm{ for any } \rho_0 > 0, \label{kch}
\end{align}
\ref{hyp:k} holds as well.
\end{lemma}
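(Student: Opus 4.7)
The plan is to mimic the decomposition used in the proof of Lemma~\ref{lem:mntrans} in the discrete-time setting on $G_N \times {\mathbb Z}$, and to exploit that $\mathsf Y$-jumps form a subsequence of $\mathsf X$-jumps so that the summation range matches the one in \eqref{kch} exactly.

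Fix $({\mathtt y},z) \in {\mathbb G}_m \times {\mathbb Z}$. I will write $T = \lambda_N^{-1}|G|^\epsilon$, $y := \phi_m^{-1}({\mathtt y})$, $x := (y, z_m+z)$ and $\rho_0 := d({\mathtt y},o_m) \vee 1$. Since $\phi_m$ preserves distances we have $y \in B(y_m,\rho_0)$; and any $y_0 \in \partial(C_m^c)$ has a neighbor in $C_m^c \subseteq B(y_m,r_N)^c$ by \ref{hyp:inc}, which forces $d(y_0,y_m) \geq r_N$ and hence $y_0 \neq y$ once $r_N > \rho_0$. Using $X_n = {\mathsf X}_{\sigma_n^{\mathsf X}}$, I will observe that $\{H_x < T\} \subseteq \{\mathsf H_x \leq \sigma_{[T]}^{\mathsf X}\}$ and partition this event according to the $\mathsf Y$-sojourn containing $\mathsf H_x$: on that event there exists an index $n \geq 1$ (with $n \geq 1$ since $Y_0 = y_0 \neq y$) such that $Y_n = y$, $\sigma_n^{\mathsf Y} \leq \sigma_{[T]}^{\mathsf X}$, and $z_m+z \in {\mathsf Z}_{[\sigma_n^{\mathsf Y},\sigma_{n+1}^{\mathsf Y}]}$. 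The key observation will be that $\sigma_n^{\mathsf Y} \leq \sigma_{[T]}^{\mathsf X}$ forces $n \leq \eta_{\sigma_{[T]}^{\mathsf X}}^{\mathsf Y} \leq \eta_{\sigma_{[T]}^{\mathsf X}}^{\mathsf X} = [T]$, so the indexing runs over $1 \leq n \leq [T]$, matching (\ref{kch}). A union bound will then yield
\begin{align*}
P_{(y_0,z_0)}[H_x < T] \leq \sum_{n=1}^{[T]} P_{(y_0,z_0)}\bigl[Y_n = y,\; z_m + z \in {\mathsf Z}_{[\sigma_n^{\mathsf Y}, \sigma_{n+1}^{\mathsf Y}]}\bigr].
\end{align*}

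Each summand is then to be handled exactly as in \eqref{eq:mntrans2}--\eqref{eq:mntrans3}: independence of $\mathsf Y$ and $\mathsf Z$ under $P_{(y_0,z_0)}$ together with Lemma~\ref{lem:zk} bounds the $\mathsf Z$-factor by $c(1+\sigma_{n+1}^{\mathsf Y} - \sigma_n^{\mathsf Y})/\sqrt{\sigma_n^{\mathsf Y}}$; the strong Markov property applied at $\sigma_n^{\mathsf Y}$ together with \ref{hyp:w} absorbs the increment into the constant; and \eqref{sd1}, combined with the Jensen bound $E\bigl[(e_1+\cdots+e_n)^{-1/2}\bigr] \leq 1/\sqrt{n-1}$ for $n \geq 2$ (with a direct computation for $n=1$), will produce $E^G_{y_0}[Y_n = y, 1/\sqrt{\sigma_n^{\mathsf Y}}] \leq c\, p_n^G(y_0,y)/\sqrt{n}$. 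Summing and taking $\sup$ over $y_0 \in \partial(C_m^c)$ and $z_0 \in {\mathbb Z}$ will bound the left-hand side by $c$ times the expression in (\ref{kch}) with this $\rho_0$, which tends to $0$ and hence gives \ref{hyp:k}. The delicate point is not the analytic estimate but the matching of summation limits: any looser use of \ref{hyp:w} to control the rate of $\mathsf Y$-jumps would multiply the upper limit by a constant greater than $1$, which in general would not be absorbable by (\ref{kch}); the pathwise inequality $\eta^{\mathsf Y} \leq \eta^{\mathsf X}$ is what makes the hypothesis exactly sharp.
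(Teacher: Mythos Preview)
Your proposal is correct and follows essentially the same route as the paper's proof: bound the hitting probability by the sum $\sum_{n=1}^{[T]} P_{(y_0,z_0)}[Y_n=y,\; z_m+z \in {\mathsf Z}_{[\sigma^{\mathsf Y}_n,\sigma^{\mathsf Y}_{n+1}]}]$ and then quote the estimates \eqref{eq:mntrans2}--\eqref{eq:mntrans3} from the proof of Lemma~\ref{lem:mntrans}. Your explicit justification of the upper summation limit via the pathwise inequality $\eta^{\mathsf Y}_t \leq \eta^{\mathsf X}_t$ (so that $\sigma^{\mathsf Y}_n \leq \sigma^{\mathsf X}_{[T]}$ forces $n \leq [T]$) is a point the paper leaves implicit, and your remark that this is what makes the hypothesis \eqref{kch} exactly match --- rather than a version with upper limit $c[T]$ --- is well taken.
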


\begin{proof}
For ${\mathtt x}=({\mathtt y},z)$, the probability in \ref{hyp:k} is bounded from above by
\begin{align}
\sum_{n=1}^{[\lambda_N^{-1} |G|^\epsilon]} P_{(y_0,z_0)} [ Y_n = \phi_m^{-1}({\mathtt y}) , z_m +z \in {\mathsf Z}_{[\sigma^{\mathsf Y}_n, \sigma^{\mathsf Y}_{n+1}]}], \label{eq:k1}
\end{align}
using that $y_0 \neq \phi_m^{-1}({\mathtt y})$ for large $N$ (cf.~\ref{hyp:inc}) in order to drop the term $n=0$.
With the same estimates as in the proof of Lemma~\ref{lem:mntrans}, see (\ref{eq:mntrans2})-(\ref{eq:mntrans3}), the expression in (\ref{eq:k1}) can be bounded by a constant times the sum on the left-hand side of (\ref{kch}). 
\end{proof}


\subsection{The $d$-dimensional box}

The $d$-dimensional box is defined as the graph with vertices
\begin{align*}
G_N = {\mathbb Z}^d \cap [0,N-1]^d, \textrm{ for } d \geq 2,
\end{align*}
and edges between any two vertices at Euclidean distance $1$. In contrast to the similar integer torus considered in \cite{S08}, the box admits different limit models for the local pictures, depending on how many coordinates $y^i_m$ of the points $y_m$ are near the boundary.

\begin{theorem} \label{thm:box}
Consider $x_{m,N}$, $1 \leq m \leq M$, in $G_N \times {\mathbb Z}$ satisfying \textup{\ref{hyp:dist}} and \textup{\ref{hyp:zconv}}, and assume that
for any $1 \leq m \leq M$, there is a number $0 \leq d(m) \leq d$, such that
\begin{align}
&y_{m,N}^i \wedge (N-y_{m,N}^i) \textrm{ is constant for } 1 \leq i \leq d(m) \textrm{ and all large $N$}, \label{box3} \\
&\lim_N y_{m,N}^i \wedge (N-y_{m,N}^i) = \infty \textrm{ for } d(m) < i \leq d. \label{box4}
\end{align}
Then the conclusion of Theorem~\ref{thm:d} holds with ${\mathbb G}_m = {\mathbb Z}_+^{d(m)} \times {\mathbb Z}^{d-d(m)}$ and $\beta = d$. 
\end{theorem}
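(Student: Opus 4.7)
The plan is to verify assumptions \ref{hyp:w}--\ref{hyp:k} together with (\ref{hyp:d}) so that Theorem~\ref{thm:d} applies directly with $\mathbb{G}_m = \mathbb{Z}_+^{d(m)} \times \mathbb{Z}^{d-d(m)}$ and $\beta = d$. Hypotheses \ref{hyp:dist} and \ref{hyp:zconv} are given. For \ref{hyp:w}, each vertex of $G_N$ has degree in $[d, 2d]$, so $w_y = \deg(y)/2 \in [d/2, d]$; and $w(G_N) = |E(G_N)| = dN^{d-1}(N-1)$, hence $w(G_N)/|G_N| \to d$, establishing (\ref{hyp:d}).

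To build the isomorphisms I first reflect each coordinate $i \leq d(m)$ for which $y_{m,N}^i > N/2$; after this graph automorphism of $G_N$, the point $y_{m,N}$ satisfies $y_{m,N}^i = k_i^m$ for $i \leq d(m)$ by (box3), while for $i > d(m)$ both $y_{m,N}^i$ and $N-1-y_{m,N}^i$ diverge by (box4). I then pick $r_N \leq L_N$ tending to infinity, both $o(\min_{i > d(m),\, m} y_{m,N}^i \wedge (N-1-y_{m,N}^i))$ and with $L_N$ also smaller than $\tfrac{1}{3}\min_{m \ne m'} d(y_{m,N}, y_{m',N})$ whenever this minimum diverges. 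A pure translation then carries $B(y_{m,N}, r_N)$ isomorphically onto $B(o_m, r_N) \subset \mathbb{G}_m$, where $o_m$ has first $d(m)$ coordinates $k_i^m$ and the rest zero, giving \ref{hyp:iso1}. Taking $C_{m,N}$ to be a box of side $L_N$ around $y_{m,N}$ (intersected with $G_N$), the same translation embeds $\bar C_{m,N}$ in $\hat{\mathbb{G}}_m := \mathbb{G}_m$ with boundary sent to boundary, yielding \ref{hyp:inc} and \ref{hyp:iso2}. The choice of $L_N$ forces pairs $C_{m,N}, C_{m',N}$ to be equal (when $y_{m,N} = y_{m',N}$) or disjoint (otherwise), giving \ref{hyp:disj}.

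For \ref{hyp:trans}, the reflection principle embeds the walk on $\hat{\mathbb{G}}_m$ into one on $\mathbb{Z}^d$, producing $p_n^{\hat{\mathbb{G}}_m}(\mathtt{y}, \mathtt{y}) \leq c n^{-d/2}$; since $d \geq 2$, Remark~\ref{rem:dec} gives \ref{hyp:trans} for any $\epsilon < (d-1)/2$, and in particular $\mathbb{G}_m \times \mathbb{Z}$ is transient (diagonal decay as in $\mathbb{Z}^{d+1}$). For \ref{hyp:mix}, Lemma~\ref{lem:eval} reduces to the discrete-time gap of the $d$-dimensional box, which Fourier diagonalization gives as $\lambda_N^d \geq c/N^2$, hence $\lambda_N^{-1} \leq cN^2 = c|G_N|^{2/d} \leq |G_N|^{2-\epsilon}$ for any $\epsilon \leq 2 - 2/d$, positive since $d \geq 2$. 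For \ref{hyp:k} I use Lemma~\ref{lem:kch}. Reflection gives $p_n^{G_N}(y_0, y) \leq c n^{-d/2}$ for $n \leq N^2$ and $p_n^{G_N}(y_0, y) \leq c |G_N|^{-1}$ for $n \geq N^2$. I split the sum in \eqref{kch} at $N^2$: the short-time part vanishes for $n < d(y_0, y) \geq L_N - \rho_0$, so it is bounded by $c \sum_{n \geq L_N - \rho_0} n^{-(d+1)/2} = O(L_N^{-(d-1)/2})$, while the long-time part is bounded by $c|G_N|^{-1}\sqrt{\lambda_N^{-1} |G_N|^\epsilon} = c N^{-d+1+d\epsilon/2}$. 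Both tend to zero for $\epsilon$ small and $L_N \to \infty$.

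The main obstacle will be balancing the simultaneous constraints on $r_N$, $L_N$, and $\epsilon$: $L_N$ must grow (so that $L_N^{-(d-1)/2} \to 0$) but must stay below the divergent ``far-from-boundary'' coordinates of (box4) and below the mutual separations of distinct $y_{m,N}$ with $v_m = v_{m'}$; while $\epsilon$ must satisfy $\epsilon < 2 - 2/d$ for \ref{hyp:mix}, $\epsilon < (d-1)/2$ for \ref{hyp:trans}, and $\epsilon < 2(d-1)/d$ for the long-time tail in \ref{hyp:k}. All three are compatible as soon as $\epsilon$ is small, and choosing $L_N$ to be a slow-enough power of the diverging quantities above handles the geometric constraints, after which Theorem~\ref{thm:d} delivers the conclusion of Theorem~\ref{thm:box}.
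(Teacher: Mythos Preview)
Your verification of \ref{hyp:disj} has a genuine gap. You claim that your choice of $L_N$ forces $C_{m,N}$ and $C_{m',N}$ to be either equal (when $y_{m,N}=y_{m',N}$) or disjoint (otherwise), but your constraint on $L_N$ only kicks in ``whenever $\min_{m\neq m'} d(y_{m,N},y_{m',N})$ diverges''. Nothing in the hypotheses prevents a pair $m\neq m'$ with $v_m=v_{m'}$, $y_{m,N}\neq y_{m',N}$, and $d(y_{m,N},y_{m',N})$ bounded: for instance take $d=2$, $y_{1,N}=(0,0)$, $y_{2,N}=(0,1)$, $z_{1,N}=0$, $z_{2,N}=[N^{d/2}]$; then \ref{hyp:dist}, \ref{hyp:zconv}, (\ref{box3}) all hold with $d(1)=d(2)=2$ and $v_1=v_2=0$, yet $d(y_{1,N},y_{2,N})=1$. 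Your boxes $C_{1,N}$ and $C_{2,N}$ of growing side $L_N$ around these two points overlap without being equal, so \ref{hyp:disj} fails.

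This is exactly the obstruction the paper circumvents by a different construction. The paper applies the covering Lemma~\ref{lem:met} to $y_1,\ldots,y_M$ with $a$ of order $N$, obtaining sets $C_m=B(y^*_m,2p)$ of diameter comparable to $N$ that are automatically equal-or-disjoint. Because these large $C_m$ may touch the boundary of $G_N$ in more coordinates than the small balls $B(y_m,r_N)$ do, the paper cannot take $\hat{\mathbb G}_m=\mathbb G_m$ and instead uses $\hat{\mathbb G}_m=\mathbb Z_+^d$ for all $m$ (see Remark~\ref{rem:aux}). Your idea of keeping $C_m$ small so that $\hat{\mathbb G}_m=\mathbb G_m$ suffices, together with the split of the sum in (\ref{kch}) at $n=N^2$, is an attractive alternative for \ref{hyp:iso2} and \ref{hyp:k}, but it does not survive \ref{hyp:disj} as written. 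To rescue your route you would still need a grouping device like Lemma~\ref{lem:met} (applied with $a$ on the scale of $r_N$ rather than $N$), and then you must argue that all $y_{m'}$ falling into a common $C_m$ share the same set of ``boundary coordinates'', which is not automatic.
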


\begin{proof}
We check that assumptions \ref{hyp:w}-\ref{hyp:k} and (\ref{hyp:d}) are satisfied and apply Theorem~\ref{thm:d}.
Assumption \ref{hyp:w} is checked immediately. With Lemma~\ref{lem:eval} and standard estimates on $\lambda^d_N$ for simple random walk on $[0,N-1]^d$ (cf.~\cite{SC}, Example~2.1.1. on p.~329 and Lemma~2.2.11, p.~338), we see that $cN^{-2} \leq \lambda_N$, and \ref{hyp:mix} follows. We have assumed \ref{hyp:dist} and \ref{hyp:zconv} in the statement. For \ref{hyp:iso1}, we define the sequence $r_N$, the vertices $o_m \in {\mathbb G}_m$ and the isomorphisms $\phi_m$ by
\begin{align*}
r_N &= \frac{1}{4^M 10} \bigl( \min_{m \neq m'} |x_m-x_{m'}|_\infty \wedge \min_{m} \min_{d(m)<i \leq d} (y^i_m \wedge (N-y^i_m))  \wedge N \bigr), \\
o_m &= (y_m^1 \wedge (N-y_m^1), \ldots, y_m^{d(m)} \wedge (N-y_m^{d(m)}), 0, \ldots, 0), \\
\phi_m(y) &= (y^1 \wedge (N-y^1), \ldots, y^{d(m)} \wedge (N-y^{d(m)}), y^{d(m)+1} - y^{d(m)+1}_m, \ldots, y^d - y^d_m). 
\end{align*}
Then $r_N \to \infty$ by \ref{hyp:dist} and (\ref{box4}), $o_m$ remains fixed by (\ref{box3}), $\phi_m$ is an isomorphism from $B(y_m,r_N)$ to $B(o_m,r_N)$ for large $N$, and \ref{hyp:iso1} follows. 
Recall that a crucial step in the proof of Theorem~\ref{thm:d} was to prove that the random walk, when started at the boundary of one of the balls $B_m$, does not return to the close vicinity of the point $x_m$ before exiting $G \times [-h_N,h_N]$, see Lemma~\ref{lem:cap2}, (\ref{eq:ch5.1}) and below. In the present context, $h_N$ is roughly of order $N$, see (\ref{s1}). However, the radius ${\mathsf r}_N$ of the ball $B_m$ can be required to be much smaller if the distances between different points diverge only slowly, cf.~(\ref{eq:rs}). We therefore needed to assume that larger neighborhoods $C_m \times {\mathbb Z}$ of the points $x_m$ are sufficiently transient by requiring that the sets ${\bar C}_m$ are isomorphic to subsets of suitable infinite graphs ${\hat {\mathbb G}}_m$. In the present context, we choose ${\hat {\mathbb G}}_m = {\mathbb Z}^d_+$ for all $m$, see Remark~\ref{rem:aux} below on why a choice different from ${\mathbb G}_m$ is required. We choose the sets $C_m$ with the help of Lemma~\ref{lem:met}. Applied to the points $y_1, \ldots, y_m$, with $a = \frac{1}{4^M 10} N$ and $b=2$, Lemma~\ref{lem:met} yields points $y^*_1, \ldots, y^*_{M}$ (some of them may be identical) and a $p$ between $\frac{1}{4^M 10} N$ and $\frac{1}{10} N$, such that 
\begin{align}
\textrm{either $C_m = C_{m'}$ or $C_m \cap C_{m'} = \emptyset$ for $C_m = B(y^*_m, 2p), \, 1 \leq m \leq M$,} \label{eq:box1}
\end{align}
and such that the balls with the same centers and radius $p$ still cover $\{y_1, \ldots, y_M\}$. Since $r_N \leq p$, we can associate to any $m$ one of the sets $C_m$ such that \ref{hyp:inc} is satisfied. The diameter of ${\bar C}_m$ is at most $2N/5 +3$, so each of the one-dimensional projections $\pi_k({\bar C}_m)$, $1 \leq k \leq d$, of ${\bar C}_m$ on the $d$ different axes contains at most one of the two integers $0$ and $N-1$ for large $N$. Hence, there is an isomorphism $\psi_m$ from ${\bar C}_m$ into ${\mathbb Z}^d_+$ such that \ref{hyp:iso2} is satisfied.
Assumption \ref{hyp:disj} directly follows from from (\ref{eq:box1}). We now turn to \ref{hyp:trans}. By embedding ${\mathbb Z}^d_+$ into ${\mathbb Z}^d$, one has for any ${\mathtt y}$ and ${\mathtt y}'$ in ${\mathbb Z}^d_+$,
\begin{align*}
p^{{\mathbb Z}^d_+}_n ({\mathtt y},{\mathtt y}') \leq 2^d \sup_{{\mathtt y},{\mathtt y}' \in {\mathbb Z}^d} p^{{\mathbb Z}^d}_n ({\mathtt y},{\mathtt y}') \leq c(d) n^{-d/2},
\end{align*}
using the standard heat kernel estimate for simple random walk on ${\mathbb Z}^d$, see for example \cite{L91}, p.~14, (1.10). Since $d \geq 2$, this is more than enough for \ref{hyp:trans}. In order to check \ref{hyp:k}, it is sufficient to prove the hypothesis (\ref{kch}) of Lemma~\ref{lem:kch}. To this end, we compare the probability $P^G_{y_0}$ with $P^{{\mathbb Z}^d}_{y_0}$ under which the canonical process $(Y_n)_{n \geq 0}$ is a simple random walk on ${\mathbb Z}^d$. We define the map $\pi: {\mathbb Z}^d \to G_N$ by $\pi((y_i)_{1 \leq i \leq d}) = (\min_{k \in {\mathbb Z}} |y_i-2kN|)_{1 \leq i \leq d}$, i.e. in each coordinate, $\pi$ is a sawtooth map. Then $(Y_n)_{n \geq 0}$ under $P^G_{y_0}$ has the same distribution as $(\pi(Y_n))_{n \geq 0}$ under $P^{{\mathbb Z}^d}_{y_0}$. It follows that for $y_0 \in \partial(C_m^c)$, $y \in B(y_m,\rho_0)$,
\begin{align}
&p^G_n(y_0,y) = \sum_{y' \in {\mathcal S}_y} p^{{\mathbb Z}^d}_n (y_0,y'),  \textrm{ where } {\mathcal S}_y =  2N {\mathbb Z}^d + \Bigl\{ \sum_{1 \leq i \leq d} l_i e_i y^i : l \in \{-1,1\}^d \Bigr\}, \label{eq:box3}
\end{align}
The probability in this sum is bounded by
\begin{align*}
 \frac{c}{n^{d/2}} \exp \Bigl\{ \frac{c'|y_0-y'|^2}{n} \Bigr\},
\end{align*}
as follows, for example, from Telcs \cite{T06}, Theorem~8.2 on p.~99, combined with the on-diagonal estimate from the local central limit theorem (cf.~\cite{L91}, p.~14, (1.10)). If we insert this bound into (\ref{eq:box3}) and split the sum into all possible distances between $y_0$ and $y'$ (necessarily this distance is at least $p - \rho_0 \geq cN$, cf.~(\ref{eq:box1})), we obtain
\begin{align*}
p^G_n(y_0,y) &\leq \sum_{k \geq 1} \frac{c}{n^{d/2}} \exp \Big\{ - \frac{c' k^2 N^2}{n} \Bigr\} k^{d-1} \leq \frac{c}{n^{d/2}} \int_0^\infty x^{d-1} \exp \Bigl\{ - \frac{c' x^2 N^2}{n} \Bigr\} dx \leq \frac{c}{N^d}.
\end{align*}
By $cN^{-2} \leq \lambda_N$, checked under \ref{hyp:mix} above, this is more than enough to imply (\ref{kch}), hence \ref{hyp:k}. 
Finally, one immediately checks that (\ref{hyp:d}) holds with $\beta = d$. 
Hence, Theorem~\ref{thm:d} applies and yields the result.
\end{proof}

\begin{remark} \label{rem:aux}
\textup{In the last proof, we have used the possibility of choosing the auxiliary graphs ${\hat {\mathbb G}}_m$ in assumption \ref{hyp:iso2} different from the graphs ${\mathbb G}_m$ in \ref{hyp:iso1}. This is necessary for the following reason: To check assumption \ref{hyp:k}, we need the diameter of each set ${\bar C}_m$ to be of order $N$ in the above argument. Hence, the set ${\bar C}_m$ can look quite different from the ball $B(y_m,r_N)$. Indeed, ${\bar C}_m$ may touch the boundary of the box $G$ in more dimensions than its much smaller subset $B(y_m, r_N)$. As a result, ${\bar C}_m$ may not to be isomorphic to a neighborhood in the same graph ${\mathbb G}_m$ as $B(y_m,r_N)$. However, our chosen ${\bar C}_m$  is always isomorphic to a neighborhood in ${\mathbb Z}_+^d$ for all $m$.}
\end{remark}

\subsection{The Sierpinski graph}

For $y \in {\mathbb R}^2$ and $\theta \in [0,2\pi)$, we denote by $\rho_{y,\theta}$ the anticlockwise rotation around $y$ by the angle $\theta$.
The vertex-set of the Sierpinski graph $G_N$ of depth $N$ is defined by the following increasing sequence (see also the top of Figure~\ref{fig:s}):
\begin{align*}
G_0 &= \{s_0=(0,0), s_1=(1,0), s_2= \rho_{(0,0),\pi/3}(s_1)\} \subset {\mathbb R}^2, \\
G_{N+1} &=  G_{N} \cup (\rho_{2^Ns_1,4\pi/3} G_{N}) \cup (\rho_{2^Ns_2,2\pi/3}G_{N}), \textrm{ for } N \geq 0.
\end{align*}
\begin{figure}[h]
\psfrag{G3}[tl][tl][3][0]{$G_3:$}
\psfrag{G+}[tl][tl][3][0]{${\mathbb G}_\infty^+:$}
\psfrag{G}[tl][tl][3][0]{${\mathbb G}_\infty:$}
\psfrag{0}[tl][tl][3][0]{$s_0=(0,0)$}
\psfrag{1}[tl][tl][3][0]{$2^2 s_1$}
\psfrag{2}[tl][tl][3][0]{$2^3 s_1$}
\psfrag{3}[tl][tl][3][0]{$2^3 s_2$}
\psfrag{4}[tl][tl][3][0]{$2^2 s_2$}
\psfrag{5}[tl][tl][3][0]{$2^2 (s_1+ s_2)$}
\psfrag{o}[tl][tl][3][0]{$o_m$}
\begin{center}
\includegraphics[angle=0,
width=0.7\textwidth]{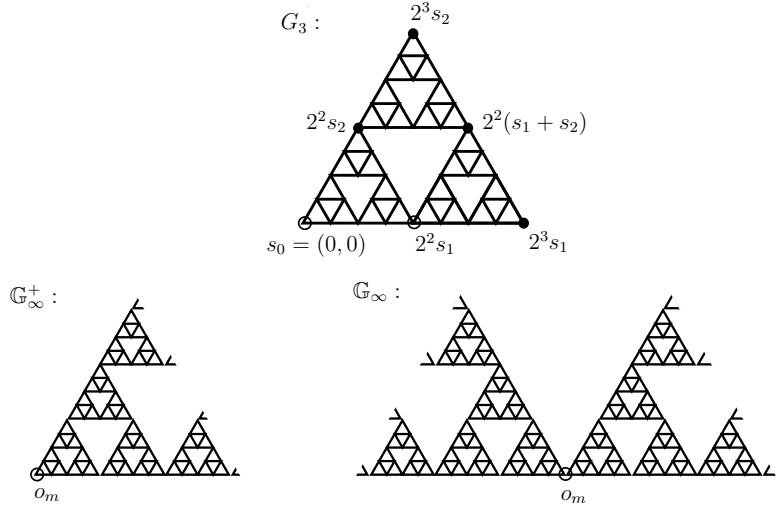}
\end{center}
\caption{An illustration of $G_3$ (top) and the infinite limit models ${\mathbb G}^+_\infty$ (bottom left) and ${\mathbb G}_\infty$ (bottom right).} \label{fig:s}
\end{figure}

\noindent The edge-set of $G_N$ contains an edge between every pair of vertices in $G_N$ at Euclidean distance $1$. Note that the vertices in $2^NG_0 \subset G_N$ have degree $2$ and all other vertices of $G_N$ have degree $4$. 

Denoting the reflection around the $y$-axis by $\sigma$, i.e.~$\sigma((y_1,y_2)) = (-y_1,y_2)$ for $(y_1,y_2) \in {\mathbb R}^2$, the two-sided infinite Sierpinski graph has vertices
\begin{align*}
{\mathbb G}_\infty &= {\mathbb G}^+_\infty \cup \sigma {\mathbb G}^+_\infty, \textrm{ where } {\mathbb G}^+_\infty = \cup_{N \geq 0} G_N,
\end{align*}
and an edge between any pair of vertices in ${\mathbb G}^+_\infty$ or in $\sigma {\mathbb G}^+_\infty$ at Euclidean distance $1$. We refer to the bottom of Figure~\ref{fig:s} for illustrations. For $N \geq 0$, we define the surjection $s_N: G_{N+1} \to G_N$ by
\begin{align*}
 s_N(y) = \left\{ \begin{array}{ll}
                    y & \textrm{for } y \in G_N,\\
\rho_{2^Ns_1,2\pi/3}(y) & \textrm{for } y \in \rho_{2^Ns_1,4\pi/3}(G_N) \setminus G_N \\
\rho_{2^Ns_2,4\pi/3}(y) & \textrm{for } y \in \rho_{2^Ns_2,2\pi/3}(G_N) \setminus G_N.
                   \end{array} \right.
\end{align*}
We then define the mapping $\pi_N$ from ${\mathbb G}^+_\infty$ onto $G_N$ by 
\begin{align*}
 \pi_N(y) = s_N \circ s_{N+1} \circ \cdots \circ s_{m-1} (y) & \textrm{ for } y \in G_m \textrm{ with } m>N.
\end{align*}
Note that $\pi_N$ is well-defined: Indeed, the vertex-sets $G_N$ are increasing in $N$ and if $y \in G_{m_1} \subset G_{m_2}$ for $N < m_1 < m_2$, then $s_k(y) = y$ for $k \geq m_1$, so that  $s_N \circ \cdots \circ s_{m_2-1} (y) = s_N \circ \cdots \circ s_{m_1-1} (y)$. We will use the following lemma: 

\begin{lemma} \label{lem:sdist}
For any ${\mathtt y} \in {\mathbb G}^+_\infty$, the distribution of the random walk $(Y_n)_{n \geq0}$ under $P^{G_N}_{\pi_N({\mathtt y})}$ is equal to the distribution of the random walk $(\pi_N(Y_n))_{n \geq 0}$ under ${\mathbb P}^{{\mathbb G}^+_\infty}_{\mathtt y}$.
\end{lemma}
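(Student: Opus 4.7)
The plan is to verify the classical Dynkin lumpability criterion for the projection $\pi_N$. Since $\pi_N$ on $G_m$ factors as $\pi_N = s_N \circ s_{N+1} \circ \cdots \circ s_{m-1}$ for any $m > N$ and ${\mathbb G}^+_\infty = \bigcup_{m} G_m$, I would first reduce by induction on $m \geq N$ to the analogous statement for each one-step projection $s_k : G_{k+1} \to G_k$. By the standard lumping theorem for Markov chains (a function $f$ of a Markov chain is itself Markov with the induced transitions, provided the row sums of the original kernel over the fibers of $f$ depend only on $f$ of the starting point and agree with the target kernel), the one-step claim reduces in turn to the identity
\begin{align*}
\sum_{{\mathtt y}' \in s_N^{-1}(y)} p^{G_{N+1}}({\mathtt y}, {\mathtt y}') = p^{G_N}(s_N({\mathtt y}), y), \quad {\mathtt y} \in G_{N+1}, \, y \in G_N,
\end{align*}
whose crucial feature is that the left-hand side is a function of ${\mathtt y}$ only through $s_N({\mathtt y})$.

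This identity would be verified by a case analysis on the position of ${\mathtt y}$ in $G_{N+1}$. If ${\mathtt y} \in G_N \setminus \{2^N s_1, 2^N s_2\}$, then the neighborhood of ${\mathtt y}$ in $G_{N+1}$ is unaffected by the attachment of the two rotated copies of $G_N$, $s_N$ acts as the identity on these neighbors, and both sides equal $p^{G_N}({\mathtt y},y)$. If ${\mathtt y}$ lies in the interior of one of the rotated copies and is not itself a gluing corner, the relevant rotation provides a weight-preserving graph isomorphism between $G_N$ and that copy, $s_N$ is its inverse, and both the degree of ${\mathtt y}$ in $G_{N+1}$ and its transition probabilities are exact copies of those from $s_N({\mathtt y})$, so that the identity holds by transport through the isomorphism.

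The only subtle case is that of the three gluing corners $2^N s_1$, $2^N s_2$, and $2^N(s_1+s_2)$, where the degree in $G_{N+1}$ jumps to $4$. At $2^N s_1$, the two triangles $G_N$ and $\rho_{2^N s_1, 4\pi/3}(G_N)$ meet; the two neighbors $a, b$ of $2^N s_1$ in $G_N$ are paired by the rotation with the two remaining neighbors in the rotated copy, and both pairs project under $s_N$ onto $\{a, b\}$. The identity then reads $2 \cdot 1/4 = 1/2 = p^{G_N}(2^N s_1, a) = p^{G_N}(2^N s_1, b)$, and the case of $2^N s_2$ is symmetric. At $2^N(s_1+s_2)$, which lies in both rotated copies but not in $G_N$ and satisfies $s_N(2^N(s_1+s_2)) = s_0$ via either rotation, the four neighbors split into two pairs, one from each rotated copy, both pairs projecting under $s_N$ onto the two neighbors $s_1, s_2$ of $s_0$ in $G_N$, again yielding $2 \cdot 1/4 = 1/2 = p^{G_N}(s_0, s_j)$. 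The main work of the proof is thus this careful geometric bookkeeping at the three gluing corners; once that is in place, the lumpability criterion is established and the lemma follows by the induction outlined above.
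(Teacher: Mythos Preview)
Your proposal is correct and follows essentially the same route as the paper: reduce via the Markov property to the one-step lumpability identity, factor $\pi_N$ through the maps $s_k$, and verify the resulting identity $\sum_{{\mathtt y}' \in s_k^{-1}(y')} p^{G_{k+1}}({\mathtt y},{\mathtt y}') = p^{G_k}(s_k({\mathtt y}),y')$ by the same case split, the nontrivial case being the three gluing corners $2^k s_1$, $2^k s_2$, $2^k(s_1+s_2)$, where the four neighbors in $G_{k+1}$ pair off two-to-one onto the two neighbors in $G_k$. The paper merely packages your first two ``non-corner'' cases into one and is slightly more explicit about the passage from ${\mathbb G}^+_\infty$ to the finite $G_{m+1}$ (using that for ${\mathtt y}\in G_m$ the neighborhood in ${\mathbb G}^+_\infty$ already lies in $G_{m+1}$, so $p^{{\mathbb G}^+_\infty}({\mathtt y},\cdot)=p^{G_{m+1}}({\mathtt y},\cdot)$); you should make this step explicit, since your inductive composition of the $s_k$-statements a priori only gives the claim for the walk on $G_m$, not on ${\mathbb G}^+_\infty$.
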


\begin{proof}
 The result follows from the Markov property once we check that for any $y,y' \in G_N$, ${\mathtt y} \in {\mathbb G}^+_\infty$ with $y = \pi_N({\mathtt y})$,
\begin{align}
 p^{G_N}(y,y') = \sum_{y'_1 \in {\pi_N}^{-1}(y')} p^{{\mathbb G}^+_\infty}({\mathtt y},y'_1).
\end{align}
We choose $m \geq N$ such that ${\mathtt y} \in G_{m}$. Then the right-hand side equals
\begin{align*}
 \sum_{y'_1 \in {\pi_N}^{-1}(y')} p^{^{G_{m+1}}}({\mathtt y},y'_1) &= \sum_{y'_1 \in s_N^{-1}(y')} \sum_{y'_2 \in s_{N+1}^{-1}(y'_1)} \cdots \sum_{y'_m \in s_{m}^{-1}(y'_{m-N})} p^{^{G_{m+1}}}({\mathtt y},y'_m).
\end{align*}
By induction on $m$, it hence suffices to show that for $y,y' \in G_m$ and ${\hat y} \in s_m^{-1}(y)$,  
\begin{align}
  p^{G_m}(y,y') = \sum_{y'_1 \in s_m^{-1}(y') \cap B({\hat y},1) \subset G_{m+1}} p^{^{G_{m+1}}}({\hat y},y'_1). \label{eq:sdist1}
\end{align}
If ${\hat y} \in G_{m+1} \setminus \{2^ms_1, 2^m s_2, 2^m(s_1+s_2)\}$, then (\ref{eq:sdist1}) follows from the observation that $s_m$ maps the distinct neighbors of $\hat y$ in $G_{m+1}$ to the distinct neighbors of $y$ in $G_m$. If ${\hat y} \in   \{2^ms_1, 2^m s_2, 2^m(s_1+s_2)\}$, then $\hat y$ has four neighbors in $G_{m+1}$, two of which are mapped to each of the two neighbors of $y \in \{2^ms_1,2^ms_2,(0,0)\}$ in $G_m$ and this implies again (\ref{eq:sdist1}).
\end{proof}

In the following theorem, we consider points $y_m$ that are either the corner $(0,0)$ or the vertex $(2^{N-1},0)$ and obtain the two different limit models ${\mathbb G}^+_\infty \times {\mathbb Z}$ and ${\mathbb G}_\infty \times {\mathbb Z}$ for the corresponding local pictures. 
\begin{theorem}
Consider $0 \leq M' \leq M$ and vertices $x_{m,N}$, $1 \leq m \leq M$, in $G_N \times {\mathbb Z}$ satisfying \textup{\ref{hyp:dist}} and \textup{\ref{hyp:zconv}} and assume that
\begin{align}
&y_{m,N} = (0,0), \textrm{ for $1 \leq m \leq M'$, and } y_{m,N} = (2^{N-1},0), \textrm{ for $M' < m \leq M$.} \label{si1}
\end{align}
Then the conclusion of Theorem \ref{thm:d} holds with ${\mathbb G}_m = {\mathbb G}^+_\infty$ for $1 \leq m \leq M'$, ${\mathbb G}_m = {\mathbb G}_\infty$ for $M' < m \leq M$ and $\beta = 2$.
\end{theorem}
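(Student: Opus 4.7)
The plan is to check hypotheses \ref{hyp:w}--\ref{hyp:k} and \eqref{hyp:d} and then apply Theorem~\ref{thm:d}, reusing the same template as in the proof of Theorem~\ref{thm:box}. Assumption \ref{hyp:w} is immediate: each vertex of $G_N$ has degree $2$ (the three outer corners only) or $4$, so $w_y \in \{1,2\}$. Since only three corner vertices have degree $\neq 4$, one obtains $w(G_N) = (4|G_N|-6)/2 = 2|G_N|-3$, hence \eqref{hyp:d} with $\beta=2$. For \ref{hyp:mix}, the classical walk-dimension estimate on the Sierpinski gasket graph gives $\lambda_N^d \geq c\, 5^{-N}$ (e.g.\ Jonsson, Barlow--Perkins); combined with Lemma~\ref{lem:eval} and $|G_N|\asymp 3^N$, this yields $\lambda_N^{-1}\leq c\,5^N \leq |G_N|^{2-\epsilon}$ for any $\epsilon < 2 - \log 5/\log 3 \approx 0.535$.

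For \ref{hyp:iso1} the geometric input is the two distinct local pictures. Around a corner $y_m=(0,0)$, a ball $B_{G_N}((0,0),r)$ with $r\ll 2^N$ coincides with $B_{{\mathbb G}_\infty^+}((0,0),r)$ under the canonical inclusion $G_N\hookrightarrow {\mathbb G}_\infty^+$, so we set ${\mathbb G}_m={\mathbb G}_\infty^+$, $o_m=(0,0)$ and $\phi_m=\mathrm{Id}$. Around a midpoint $y_m=(2^{N-1},0)$, the recursive decomposition $G_N=G_{N-1}\cup \rho_{2^{N-1}s_1,4\pi/3}G_{N-1}\cup\rho_{2^{N-1}s_2,2\pi/3}G_{N-1}$ shows that $y_m=2^{N-1}s_1$ is the common corner of two adjacent copies of $G_{N-1}$; the resulting local structure is precisely two Sierpinski gaskets glued at a single vertex, which is the structure of ${\mathbb G}_\infty$ near $(0,0)$. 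Thus ${\mathbb G}_m={\mathbb G}_\infty$, $o_m=(0,0)$, and $\phi_m$ is built by applying the appropriate rotation on each of the two halves. Both maps are isomorphisms of weighted graphs on $B(y_m,r_N)$ for any $r_N$ with $r_N=o(2^N)$.

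For \ref{hyp:inc}--\ref{hyp:disj} we exploit self-similarity: pick $k_N\to\infty$ with $k_N\leq N-2$ and, for corner points, take $C_m$ to be the sub-gasket of side $2^{k_N}$ attached at $(0,0)$ (a subset naturally isomorphic to $G_{k_N}\subset{\mathbb G}_\infty^+$, so set $\hat{\mathbb G}_m={\mathbb G}_\infty^+$); for midpoint points, take $C_m$ to be the union of the two sub-gaskets of side $2^{k_N}$ sharing the corner $(2^{N-1},0)$ (isomorphic via $\psi_m$ to a subset of ${\mathbb G}_\infty={\mathbb G}_\infty^+\cup\sigma{\mathbb G}_\infty^+}$, so set $\hat{\mathbb G}_m={\mathbb G}_\infty$). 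Since all corner-sets coincide, all midpoint-sets coincide, and the two types of sets lie in disjoint portions of $G_N$ for $k_N\leq N-2$, assumption \ref{hyp:disj} holds; the condition $\psi_m(\partial C_m)=\partial{\mathbb C}_m$ follows from the definition. Assumption \ref{hyp:trans} is verified via the well-known on-diagonal heat-kernel decay on the infinite Sierpinski gasket graph, $p^{{\mathbb G}_\infty^+}_n({\mathtt y},{\mathtt y})\leq c\, n^{-d_s/2}$ with spectral dimension $d_s=2\log 3/\log 5 \approx 1.365>1$, so Remark~\ref{rem:dec} yields \ref{hyp:trans} for any $\epsilon<(d_s-1)/2$; the same estimate transfers to ${\mathbb G}_\infty$.

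Finally, \ref{hyp:k} is obtained via Lemma~\ref{lem:kch}. A projection analogue of Lemma~\ref{lem:sdist} lifts the random walk on $G_N$ to ${\mathbb G}_\infty^+$, so that, much as in the box case, for $y_0\in\partial(C_m^c)$ and $y\in B(y_m,\rho_0)$ one has $p^{G_N}_n(y_0,y)\leq \sum_{y'\in\pi_N^{-1}(y)} p^{{\mathbb G}_\infty^+}_n(\tilde y_0,y')$; the sub-Gaussian estimate $p^{{\mathbb G}_\infty^+}_n({\mathtt y},{\mathtt y}')\leq c n^{-d_s/2}\exp(-c'(d({\mathtt y},{\mathtt y}')^{d_w}/n)^{1/(d_w-1)})$, with $d_w=\log 5/\log 2$, is negligible for $n\ll d(y_0,y)^{d_w}\asymp 5^{k_N}$ and is bounded by $cn^{-d_s/2}$ beyond. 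Since $d_s>1$, the contribution $\sum_{n\geq 5^{k_N}}^{T} n^{-(d_s+1)/2}$ (for $T=\lambda_N^{-1}|G_N|^\epsilon$) is $O(5^{-k_N(d_s-1)/2})\to 0$, giving \eqref{kch} and hence \ref{hyp:k}. With all hypotheses verified, Theorem~\ref{thm:d} applies. The main obstacle I anticipate is the bookkeeping that simultaneously guarantees (i) the midpoint isomorphism onto ${\mathbb G}_\infty$ with the correct identification of the two halves, and (ii) the right scaling $k_N$ of the auxiliary sets $C_m$ so that corner-type and midpoint-type sets remain disjoint while still producing large enough hitting distances for \ref{hyp:k}.
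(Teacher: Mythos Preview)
Your overall strategy is the same as the paper's: verify \ref{hyp:w}--\ref{hyp:k} and \eqref{hyp:d} and invoke Theorem~\ref{thm:d}. The checks of \ref{hyp:w}, \ref{hyp:mix}, \ref{hyp:iso1}, \ref{hyp:trans}, and \eqref{hyp:d} are fine and match the paper. One cosmetic difference: the paper takes $C_m=B(y_m,2^{N-1}/3)$ (a fixed macroscopic fraction of $G_N$), whereas you introduce an extra mesoscopic scale $2^{k_N}$. Either choice can work, but the macroscopic choice makes the verification of \ref{hyp:k} cleaner for the reason below.

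The genuine gap is in your verification of \ref{hyp:k} via Lemma~\ref{lem:kch}. After lifting through $\pi_N$ you correctly write $p^{G_N}_n(y_0,y)\le\sum_{y'\in\pi_N^{-1}(y)}p^{{\mathbb G}_\infty^+}_n(\tilde y_0,y')$, but you then bound ``the contribution'' by $\sum_{n\ge 5^{k_N}}^{T} n^{-(d_s+1)/2}$, which is the estimate for a \emph{single} preimage. The sum over the infinitely many preimages is not controlled by $cn^{-d_s/2}$ once $n$ reaches order $5^N$: the finite-graph heat kernel equilibrates to $\pi_{G_N}(y)\asymp 3^{-N}$, and since $T=\lambda_N^{-1}|G_N|^\epsilon\asymp 5^N 3^{N\epsilon}$ lies well beyond $5^N$, the range $n\in[c5^N,T]$ is present and your bound $p^{G_N}_n(y_0,y)\le cn^{-d_s/2}$ is false there. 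The paper closes this by a dyadic shell count of preimages: there are at most $c(\rho_0)3^{l}$ preimages within distance $2^{N+l}$, and inserting the sub-Gaussian bound and summing over $l$ yields the \emph{uniform} estimate $p^{G_N}_n(y_0,y)\le c(\rho_0)3^{-N}$ for all $n$; then \eqref{kch} follows from $\sqrt{5}<3$. With your mesoscopic $C_m$ you would in addition have to treat separately the single near preimage at distance $\asymp 2^{k_N}$ (for which your $n^{-d_s/2}$ bound is valid up to $n\lesssim 5^N$) and the far preimages at distance $\gtrsim 2^N$ (which require the shell count). Either carry out that two-scale argument, or adopt the paper's macroscopic $C_m$ and the shell count directly.
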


\begin{proof}
Let us again check that the hypotheses \ref{hyp:w}-\ref{hyp:k} and (\ref{hyp:d}) are satisfied. 
One easily checks that \ref{hyp:w} holds with $c_0 = 1$ and $c_1 = 2$. 
Using Lemma~\ref{lem:eval} and the explicit calculation of $\lambda^d_N$ by Shima \cite{SH91}, we find that $c 5^{-N} \leq \lambda_N \leq c' 5^{-N}.$ Indeed, in the notation of \cite{SH91}, Proposition~3.3 in \cite{SH91} shows that $\lambda^d_N$ is given by $\phi_-^{(N)}(3)$ for the function $\phi_-$ defined above Remark~2.16, using our $N$ in place of $m$ and setting the $N$ of \cite{SH91} equal to $3$. Then $\lambda^d_N=\phi_-^{(N)}(3)$ is decreasing in $N$ and converges to the fixed point $0$ of $\phi_-$. With Taylor's theorem it then follows that $\lambda^d_N 5^N$ converges to $1$. Since
$|G_N| = 3 + \sum_{n=1}^{N} 3^n \leq c 3^N$, \ref{hyp:mix} holds. 
We have assumed \ref{hyp:dist} and \ref{hyp:zconv} in the statement. For \ref{hyp:iso1}, we define the radius
\begin{align*}
r_N &= \frac{1}{4} ( 2^{N-1} \wedge \min_{1 \leq m < m' \leq M} d(x_m,x_{m'})), \textrm{ and set}\\
o_m &= (0,0) \textrm{, for all } m.
\end{align*}
The balls $B(y_m,r_N) \subset G_N$ intersect $2^NG_0$ only at the points $y_m$, because the distance between different points of $2^N G_0$ equals $2^N$. We can therefore define the isomorphisms $\phi_m$ from $B(y_m,r_N)$ to $B((0,0),r_N) \subset {\mathbb G}_m$ as the identity for $m \leq M'$ and as the translation by $(-2^{N-1},0)$ for $m>M'$ and \ref{hyp:iso1} follows. As in the previous example, the radius ${\mathsf r}_N$ defined in (\ref{eq:rs}) can be small compared with the square root of the relaxation time, so it is essential for the proof that larger neighborhoods $C_m \times {\mathbb Z}$ of the points $x_m$ are sufficiently transient. In the present case, we define the auxiliary graphs as ${\hat {\mathbb G}}_m = {\mathbb G}_m$ and $C_m = B(y_m, 2^{N-1}/3)$ for $1 \leq m \leq M$. Then \ref{hyp:inc} holds, because $r_N < 2^{N-1}/3$ for large $N$ and the isomorphisms $\psi_m$ required for \ref{hyp:iso2} can be defined in a similar fashion as the isomorphisms $\phi_m$ above. Assumption \ref{hyp:disj} is immediate.
We now check \ref{hyp:trans}. It is known from \cite{B00} (see also \cite{J96}) that for any ${\mathtt y}$ and ${\mathtt y}'$ in ${\mathbb G}_\infty$,
\begin{align}
p^{{\mathbb G}_\infty}_n({\mathtt y},{\mathtt y}') \leq c n^{-d_s/ 2} \exp \Bigl\{ - c' \Bigl( \frac{d({\mathtt y},{\mathtt y}')^{d_w}}{n} \Bigr)^{1/(d_w-1)} \Bigr\}, \label{eq:si5}
\end{align}
for $d_s = 2 \log 3/ \log 5$, $d_w = \log 5/ \log 2$ and $n \geq 1$.
Since 
\begin{align}
p^{{\mathbb G}^+_\infty}_n ({\mathtt y}_0,{\mathtt y}) = p^{{\mathbb G}_\infty}_n({\mathtt y}_0,{\mathtt y}) + p^{{\mathbb G}_\infty}_n({\mathtt y}_0, \sigma {\mathtt y}) \label{eq:si6}
\end{align}
and $\log 3/ \log 5>1/2$, this is enough for \ref{hyp:trans}. To prove \ref{hyp:k}, we use Lemma~\ref{lem:kch} and only check (\ref{kch}). To this end, note that $B(y_m,\rho_0) \subseteq K \subseteq G_N$, for $K=\cup_{y' \in 2^{N-1}G_1} B(y', \rho_0)$ and that the preimage of the vertices in $2^{N-k}G_k \subset G_N$ under $\pi_N$ is $2^{N-k} {\mathbb G}^+_\infty$ for $0 \leq k \leq N$. It follows from Lemma~\ref{lem:sdist} that for $y_0 \in \partial (C_m^c)$, $y \in B(y_m,\rho_0) \subseteq K$ and $N \geq c(\rho_0)$,
\begin{align}
p^{G_N}_n(y_0,y) &\leq \sum_{y' \in K} p^{G_N}_n(y_0,y') = \sum_{{\mathtt y}' \in {\mathcal K}} p^{{\mathbb G}^+_\infty}_n(y_0,{\mathtt y}'), \textrm{for } {\mathcal K} = \bigcup_{{\mathtt y} \in 2^{N-1} {\mathbb G}^+_\infty} B({\mathtt y},\rho_0). \label{eq:si4}
\end{align}
Observe now that for any given vertex ${\mathtt y}'$ in ${\mathbb G}_\infty$, the number of vertices in $B({\mathtt y}',2^k) \cap {\mathcal K}$ is less than $c(\rho_0)|B({\mathtt y}',2^k) \cap 2^{N-1}{\mathbb G}^+_\infty| \leq c(\rho_0) 3^{k-N}$. Also, it follows from the choice of $C_m$ that $d(y_0,2^{N-1}{\mathbb G}^+_\infty) \geq c2^{N}$, so the distance between $y_0$ and any point in ${\mathcal K}$ is at least $c(\rho_0)2^N$. Summing over all possible distances in (\ref{eq:si4}), we deduce with the help of (\ref{eq:si5}) and (\ref{eq:si6}) that
\begin{align*}
p^{G_N}_n(y_0,y) &\leq c(\rho_0) \sum_{l=1}^\infty 3^l n^{-d_s/2} \exp \Bigl\{ - c'(\rho_0) \Bigl( \frac{2^{(N+l)d_w}}{n} \Bigr)^{1/(d_w-1)} \Bigr\} \\
&\leq c(\rho_0) n^{-d_s/2} \int_0^\infty 3^x \exp \Bigl\{ - c'(\rho_0) \Bigl( \frac{5^{N+x}}{n} \Bigr)^{1/(d_w-1)} \Bigr\} dx.
\end{align*}
After substituting $x=y - N + \log n/ \log 5$, this expression is seen to be bounded by
\begin{align*}
c(\rho_0)3^{-N} \int_{-\infty}^\infty 3^y \exp \bigl\{ -c'(\rho_0) 5^{y/(d_w-1)} \bigr\} dy \leq c(\rho_0) 3^{-N}.
\end{align*}
By $\sqrt{5} < 3$ and $c5^{-N} \leq \lambda_N$, as we have seen under \ref{hyp:mix}, this is more than enough for (\ref{lem:kch}), hence \ref{hyp:k}.
Finally, it is straightforward to check that (\ref{hyp:d}) holds with $\beta = 2$.
Hence, Theorem \ref{thm:d} applies and yields the result.
\end{proof}

\subsection{The $d$-ary tree}

For a fixed integer $d \geq 2$, we let ${\mathbb G}_o$ be the infinite $d+1$-regular graph without cycles, called the infinite $d$-ary tree. We fix an arbitrary vertex $o \in {\mathbb G}_o$ and call it the root of the tree. See Figure~\ref{fig:tree} (left) for a schematic illustration in the case $d=2$.

\begin{figure}[h]
\psfrag{o}[bl][Bl][2][0]{$o$}
\begin{center}
\includegraphics[angle=0,width=0.7\textwidth]{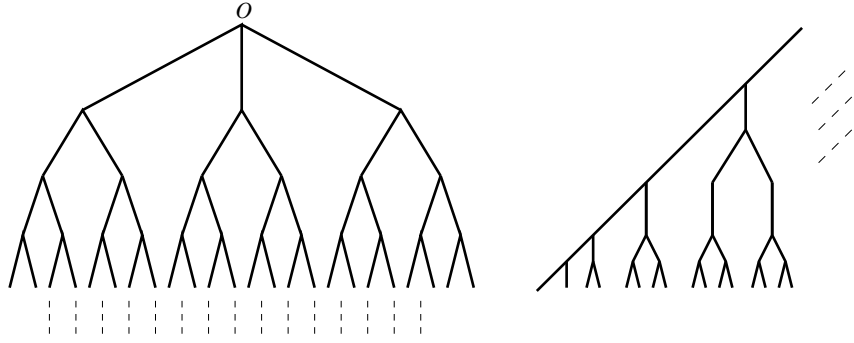}
\end{center}
\caption{A schematic illustration of ${\mathbb G}_o$ (left) and ${\mathbb G}_\lozenge$ (right) for $d=2$.} \label{fig:tree}
\end{figure}

\noindent We choose $G_N$ as the ball of radius $N$ centered at $o \in {\mathbb G}_o$. For any vertex $y$ in $G_N$, we refer to the number $|y|=N-d(y,o)$ as the height of $y$. Vertices in $G_N$ of depth $N$ (or height $0$) are called leaves. 
The boundary-tree ${\mathbb G}_\lozenge$ contains the vertices
\begin{align*}
{\mathbb G}_\lozenge = \{ (k;s): k \geq 0, s \in S_d \},
\end{align*}
where $S_d$ is the set of infinite sequences $s=(s_1,s_2, \ldots)$ in $\{1, \ldots, d\}^{[1, \infty)}$ with at most finitely many terms different from $1$. The graph ${\mathbb G}_\lozenge$ has edges $\{(k;s), (k+1;s')\}$ for vertices $(k;s)$ and $(k+1;s')$ whenever $s_{n+1} = s'_n$ for all $n \geq 1$. In this case, we refer to the number $k = |(k;s)|$ as the height of the vertex $(k;s)$ and to all vertices at height $0$ as leaves. See Figure~\ref{fig:tree} (right) for an illustration of ${\mathbb G}_\lozenge$. The following rough heat-kernel estimates will suffice for our purposes: 
\begin{lemma} \label{lem:trk}
\begin{align}
&p^{{\mathbb G}_o}_n({\mathtt y}_0, {\mathtt y}) \leq e^{-c(d)n}, \label{trk0}\\
&p^{{\mathbb G}_\lozenge}_n({\mathtt y}_0, {\mathtt y}) \leq n^{-3/5} + c(d,|{\mathtt y}|) \exp \{ - c'(d,|{\mathtt y}|) n^{c(d)}\} \textrm{ and }\label{trk1} \\
&p^{G_N}_n(y_0, y) \leq ce^{-c(d)d(y_0,y)} {\mathbf 1}_{n \leq N^3} + c(d) \bigl( d^{-N}+n^{-3/5} \bigr) {\mathbf 1}_{n > N^3}. \label{trk2}
\end{align}
(We refer to the end of the introduction for our convention on constants.)
\end{lemma}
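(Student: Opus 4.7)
The three bounds will be established separately. For (\ref{trk0}), I would invoke Kesten's classical estimate on the spectral radius of simple random walk on a regular tree: since ${\mathbb G}_o$ is $(d+1)$-regular with all weights $1/2$, the transition operator on $\ell^2({\mathbb G}_o)$ is self-adjoint and has norm $\rho = 2\sqrt{d}/(d+1)$, which is strictly less than $1$ precisely when $d \geq 2$. Applied to $f = \mathbf 1_{\{{\mathtt y}\}}$ and combined with Cauchy--Schwarz (using $p_{2n}({\mathtt y},{\mathtt y}) \leq \rho^{2n}$ and $p_n({\mathtt y}_0,{\mathtt y})^2 \leq p_n({\mathtt y}_0,{\mathtt y}_0)\, p_n({\mathtt y},{\mathtt y})$), this yields $p^{{\mathbb G}_o}_n({\mathtt y}_0,{\mathtt y}) \leq \rho^n$ uniformly, which is exactly (\ref{trk0}) with $c(d) = -\log\rho$.

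For (\ref{trk1}), the key structural observation is that the walk on ${\mathbb G}_\lozenge$ has a strict upward drift in the height coordinate $|(k;s)| = k$: from any vertex at height $k \geq 1$ it moves up with probability $d/(d+1) > 1/2$ and down with probability $1/(d+1)$, while from a leaf it moves up with probability $1$. Hence the projected height chain is a transient birth--death process on ${\mathbb Z}_{\geq 0}$ reflected at $0$, escaping a.s. to $+\infty$. I would condition on the height trajectory and analyze the induced "horizontal" motion on the leaf alphabet $S_d$, which I identify with ${\mathbb Z}_{\geq 0}$ via the base-$d$ encoding of the suffix $s_1 s_2 \ldots$. Successive returns of the height process to the level $|{\mathtt y}|$ induce, by stationarity of the excursion law, a random walk on this horizontal coordinate whose increments are stochastically comparable to i.i.d.\ integer-valued steps. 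A local-CLT bound of order $n^{-1/2}$ on being at the specific horizontal label of ${\mathtt y}$, combined with the cost of simultaneously being at the correct height, then yields a polynomial bound of the form claimed (the exponent $3/5$ is deliberately loose, any number strictly greater than $1/2$ would suffice). The exponential correction $c(d,|{\mathtt y}|)\exp\{-c'(d,|{\mathtt y}|)\, n^{c(d)}\}$ absorbs the pre-equilibration regime, where the $|{\mathtt y}|$-dependence enters because the height chain takes time of order $|{\mathtt y}|^2$ to spread away from low levels. The main obstacle will be carrying out the coupling between the horizontal increments and a genuine i.i.d.\ random walk in a sufficiently uniform way over the starting vertex ${\mathtt y}_0$, since the lengths of the vertical excursions are themselves heavy-tailed.

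For (\ref{trk2}), I dichotomize on whether $n \leq N^3$ or $n > N^3$. In the short-time regime, the walk on $G_N$ travels at most $n$ steps, so either $n < d(y_0,y)$ and $p_n^{G_N}(y_0,y) = 0$, or $n \geq d(y_0,y)$ and a Carne--Varopoulos / spectral-radius argument (using that until the walk reaches the boundary leaves at depth $N$ it can be coupled with the walk on ${\mathbb G}_o$, and then iteratively applying the bound from (\ref{trk0})) produces $p_n^{G_N}(y_0,y) \leq c\rho^{d(y_0,y)}$, i.e.\ the first summand of (\ref{trk2}). In the long-time regime $n > N^3$, the walk has mixed on the coarse scale, and I would compare with the reversible probability $\pi(y) = w_y/w(G_N) \asymp d^{-N}$, which supplies the $d^{-N}$ term; the additional $n^{-3/5}$ term reflects the residual non-uniformity and is obtained by running the same height/horizontal decomposition used for ${\mathbb G}_\lozenge$ in (\ref{trk1}), using that the bottom layers of $G_N$ near the leaves are locally isomorphic to ${\mathbb G}_\lozenge$, so the analysis transfers once one verifies that truncation of the tree at depth $N$ affects the bound by at most $c(d)d^{-N}$.
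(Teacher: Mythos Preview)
Your approach to (\ref{trk0}) via Kesten's spectral radius is fine and equivalent to the paper's invocation of non-amenability.

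For (\ref{trk1}), however, you have the drift direction reversed, and this breaks the entire strategy. From a vertex $(k;s)$ of ${\mathbb G}_\lozenge$ at height $k \geq 1$ there is \emph{one} neighbor at height $k+1$ (namely $(k+1;(s_2,s_3,\ldots))$) and $d$ neighbors at height $k-1$ (namely $(k-1;(j,s_1,s_2,\ldots))$ for $j=1,\ldots,d$). Hence the height chain jumps up with probability $1/(d+1)$ and down with probability $d/(d+1)$: it is positively recurrent, concentrated near the leaves, and certainly does not escape to $+\infty$. Your horizontal local-CLT decomposition is predicated on the wrong picture, and the acknowledged obstacle of coupling the horizontal increments to an i.i.d.\ walk is not a technicality but a symptom of this.

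The paper's argument is much more direct and exploits exactly the downward bias you missed. Fix $L \approx \tfrac{3}{5\log d}\log n$ and let $S$ be the first time the height reaches $|{\mathtt y}|+L$. On $\{S>n\}$ one iterates the simple Markov property in blocks of length $|{\mathtt y}|+L$ to get the stretched-exponential term. On $\{S \leq n\}$, the key is a \emph{symmetry} observation: from $Y_S$ there are $d^L$ descendants at height $|{\mathtt y}|$, all equally likely to be visited at any later time, so $P_{Y_S}[Y_m={\mathtt y}] \leq d^{-L} \leq n^{-3/5}$. No CLT is needed.

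The same symmetry idea drives the short-time case of (\ref{trk2}): any path from $y_0$ to $y$ must pass through the branching vertex $y_1$ on the geodesic, and from $y_1$ there are $d^{d(y_1,y)} \geq d^{d(y_0,y)/2}$ equally likely descendants at the height of $y$. This immediately gives $p^{G_N}_n(y_0,y) \leq c d^{-d(y_0,y)/2}$, with no appeal to spectral radius or coupling with ${\mathbb G}_o$. Your proposed coupling with the walk on ${\mathbb G}_o$ breaks down the moment the walk touches a leaf, which can happen in one step, so Carne--Varopoulos with the spectral radius of ${\mathbb G}_o$ does not transfer to $G_N$ as you suggest. The long-time case of (\ref{trk2}) in the paper is indeed close in spirit to what you outline: one first reruns the (\ref{trk1}) argument inside $G_N$ to get a bound with $|y|$-dependent constants, then removes that dependence by first sending the walk to a leaf (which takes time $\ll N^3$ with high probability) and invoking reversibility.
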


\begin{proof}
The estimate (\ref{trk0}) can be shown by an elementary estimate on the biased random walk $(d(Y_n,{\mathtt y}))_{n \geq 0}$ on $\mathbb N$. More generally, (\ref{trk0}) is a consequence of the non-amenability of ${\mathbb G}_o$, see \cite{W00}, Corollary 12.5,~p.~125.

\vspace{3mm}

We now prove (\ref{trk1}).
Under $P^{{\mathbb G}_\lozenge}_{{\mathtt y}_0}$, the height $|Y|$ of $Y$ is distributed as a random walk on $\mathbb N$ starting from $|y_0|$ with transition probabilities $w_{k,k+1}= \frac{1}{d+1}$, $w_{k,k-1} = \frac{d}{d+1}$ for $k \geq 1$ and reflecting barrier at $0$.  
We set for $n \geq 1$,
\begin{align}
L = \Bigl[ \frac{3}{5 \log d} \log n \Bigr] +1, \label{eq:trk0}
\end{align}
and define the stopping time $S$ as the first time when $Y$ reaches the level $|{\mathtt y}|+L$:
\begin{align*}
S = \inf \{n \geq 0: |Y_n| \geq |{\mathtt y}|+L\}.
\end{align*}
Then we have
\begin{align*}
p^{{\mathbb G}_\lozenge}_n({\mathtt y}_0, {\mathtt y}) \leq 
P^{{\mathbb G}_\lozenge}_{{\mathtt y}_0} [S \leq n, Y_n = {\mathtt y}] + P^{{\mathbb G}_\lozenge}_{|{\mathtt y}_0|} [S>n], \textrm{ for } n \geq 0.
\end{align*}
Observe that the second probability on the right-hand side can only increase if we replace $|{\mathtt y}_0|$ by $0$. We now apply the simple Markov property and this last observation at integer multiples of the time $|{\mathtt y}|+L$ to the second probability and the strong Markov property at time $S$ to the first probability on the right-hand side and obtain
\begin{align}
p^{{\mathbb G}_\lozenge}_n({\mathtt y}_0, {\mathtt y}) \leq E^{{\mathbb G}_\lozenge}_{{\mathtt y}_0} \bigl[S \leq n, P^{{\mathbb G}_\lozenge}_{Y_S} [Y_m = {\mathtt y}] \bigl|_{m=n-S} \bigr] + P^{{\mathbb G}_\lozenge}_0 [S>|{\mathtt y}|+L]^{[ \frac{n}{|{\mathtt y}|+L} ]}. \label{eq:trk1}
\end{align}
The second probability on the right-hand side is equal to $1-(d+1)^{-(|{\mathtt y}|+L)}$.
In order to bound the expectation, note that by definition of $S$, there are $d^L$ descendants ${\mathtt y}'$ of $Y_S$ at the same height as ${\mathtt y}$, and the $P^{{\mathbb G}_\lozenge}_{Y_S}$-probability that $Y_m$ equals ${\mathtt y}'$ is the same for all such ${\mathtt y}'$. Hence, the expectation on the right-hand side of (\ref{eq:trk1}) is bounded by $d^{-L}.$
We have hence shown that
\begin{align*}
p^{{\mathbb G}_\lozenge}_n({\mathtt y}_0, {\mathtt y}) \leq \Bigl( \frac{1}{d} \Bigr)^L + \Bigl( 1 - \Bigl(\frac{1}{d+1} \Bigr)^{|{\mathtt y}|+L} \Bigr)^{[\frac{n}{|{\mathtt y}|+L}]}.
\end{align*}
Substituting the definition of $L$ from (\ref{eq:trk0}) and using that $\frac{\log(d+1)}{\log d} \leq \frac{\log 3}{\log 2} < \frac{5}{3}$ for the second term, one finds (\ref{trk1}). 

\vspace{3mm}

We now come to (\ref{trk2}) and first treat the case $n \leq N^3$. By uniform boundedness and reversibility of the measure $y \mapsto w_y$, we have $p^{G_N}_n(y_0,y) \leq c p^{G_N}_n(y,y_0)$, so we can freely exchange $y_0$ and $y$ in our estimates. In particular, we can assume that $d(y_0,o) \leq d(y,o)$. Now we denote by $y_1$ the first vertex at which the shortest path from $y_0$ to $o$ meets the shortest path from $y$ to $o$. Then any path from $y_0$ to $y$ must pass through $y_1$. From the strong Markov property applied at time $H_{y_1}$, it follows that
\begin{align}
 p^{G_N}_n(y_0,y) = E^{G_N}_{y_0} \bigl[ \{H_{y_1} \leq n\}, P^{G_N}_{H_{y_1}}[ Y_k = y] \bigl|_{k=n-H_{y_1}} \bigr]. \label{eq:trk1.1}
\end{align}
The $P^{G_N}_{H_{y_1}}$-probability on the right-hand side remains unchanged if $y$ is replaced by any of the $d^{d(y_1,y)}$ descendants $y'$ of $y_1$ at the same height as $y$. Moreover, the assumption $d(y_0,o) \leq d(y,o)$ implies that $d(y_1,y) \geq d(y_1,y_0)$, hence $2d(y_1,y) \geq d(y_0,y)$. In particular, there are at least $d^{d(y_0,y)/2}$ different vertices $y'$ for which $P^{G_N}_{H_{y_1}}[ Y_k = y] = P^{G_N}_{H_{y_1}}[ Y_k = y']$.
By (\ref{eq:trk1.1}), this proves the estimate (\ref{trk2}) for $n \leq N^3$. We now treat the case $n>N^3$. The argument used to prove (\ref{trk1}) with $(|y|+L) \wedge N$ playing the role of $|{\mathtt y}| + L$ yields
\begin{align}
 p^{G_N}_n(y_0,y) \leq c(d,|y|) \bigl(d^{-N} \vee n^{-3/5} + e^{-c(d,|y|)n^{c(d)}} \bigr). \label{eq:trk2}
\end{align}
The assumption $n > N^3$ will now allow us to remove the dependence on $|y|$ of the right-hand side. By applying the strong Markov property at the entrance time $H_{\partial B(o,N-1)}$ of the random walk into the set $\partial B(o,N-1)$ of leaves of $G_N$, we have
\begin{align}
 p^{G_N}_n (y_0,y) \leq P^{G_N}_{y_0} [H_{\partial B(o,N-1)} > N^3 /2] + \sup_{y': |y'|=0} \sup_{n-N^3/2 \leq k \leq n} p^{G_N}_k (y',y), \textrm{ for } n > N^3. \nonumber
\end{align}
Applying reversibility to exchange $y'$ and $y$, then (\ref{eq:trk2}) to the second term, we infer that
\begin{align}
 p^{G_N}_n (y_0,y) \leq P^{G_N}_{y_0} [H_{\partial B(o,N-1)} > N^3 /2] + c(d) \bigl( d^{-N} + n^{-3/5} \bigr), \textrm{ for $n > N^3$,} \label{eq:trk2.1}
\end{align}
where we have used that $e^{-c(d)n^{c(d)}} \leq c(d)n^{-2/3}$. In order to bound the first term on the right-hand side, we apply the Markov property at integer multiples of $10N$ and obtain
\begin{align}
 P^{G_N}_{y_0} [H_{\partial B(o,N-1)} > N^3 /2] &\leq \sup_{y \in G_N} P^{G_N}_y [H_{\partial B(o,N-1)} > 10N]^{cN^2}. \label{eq:trk3}
\end{align}
Note that the random walk on ${\mathbb G}_o \supset G_N$, started at any vertex $y$ in $G_N = B(o,N)$, must hit $\partial B(o,N-1)$ before exiting $B(y,2N)$. Applying this observation to the probability on the right-hand side of (\ref{eq:trk3}), we deduce with (\ref{eq:trk2.1}) that
\begin{align}
 p^{G_N}_n (y_0,y) \leq P^{{\mathbb G}_o}_o [T_{B(o,2N)} > 10N]^{cN^2} + c(d) \bigl( d^{-N} + n^{-3/5} \bigr), \textrm{ for $n > N^3$.}\nonumber
\end{align}
The probability on the right-hand side is bounded by the probability that a random walk on $\mathbb Z$ with transition probabilities $p_{z,z+1}=d/(d+1)$ and $p_{z,z-1} = 1/(d+1)$ starting at $0$ is at a site in $(-\infty,2N]$ after $10N$ steps. From the law of large numbers applied to the iid increments with expectation $(d-1)/(d+1) \geq 1/3$ of such a random walk, it follows that this probability is bounded from above by $1-c < 1$ for $N \geq c'$, hence bounded by $1-c''<1$ for all $N$ (by taking $1-c'' = (1-c) \vee \max \{P^{{\mathbb G}_o}_o [T_{B(o,2N)} > 10N]: N < c'\}$). It follows that 
\begin{align*}
 p^{G_N}_n (y_0,y) \leq e^{-c(d)N^2} + c(d) \bigl( d^{-N} + n^{-3/5} \bigr) \leq c(d) \bigl( d^{-N} + n^{-3/5} \bigr), \textrm{ for } n > N^3.
\end{align*}
This completes the proof of (\ref{trk2}) and of Lemma~\ref{lem:trk}.
\end{proof}
We now consider vertices $y_m$ in $G_N$ that remain at a height that is either of order $N$ or constant. This gives rise to the two different transient limit models ${\mathbb G}_o \times {\mathbb Z}$ and ${\mathbb G}_\lozenge \times {\mathbb Z}$.

\begin{theorem} \label{thm:tree}
\textup{($d \geq 2$)} 
Consider vertices $x_{m,N}$, $1 \leq m \leq M$, in $G_N \times {\mathbb Z}$ satisfying \textup{\ref{hyp:dist}} and \textup{\ref{hyp:zconv}}
and assume that for some number $0 \leq M' \leq M$ and some $\delta \in (0,1)$,
\begin{align}
&\liminf_N |y_{m,N}|/N > \delta,  \textrm{ for $1 \leq m \leq M'$, and}\label{tr:bd}\\
&|y_{m,N}|  \textrm{ is constant for $M' < m \leq M$ and large $N$.}\label{tr:d}
\end{align}
Then the conclusion of Theorem~\ref{thm:d} holds with ${\mathbb G}_m = {\mathbb G}_o$ for $1 \leq m \leq M'$, ${\mathbb G}_m = {\mathbb G}_\lozenge$ for $M' < m \leq M$ and $\beta=1$. 
\end{theorem}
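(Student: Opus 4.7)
The plan is to verify assumptions \ref{hyp:w}--\ref{hyp:k} and \eqref{hyp:d} for the $d$-ary tree and then invoke Theorem~\ref{thm:d}. Assumption \ref{hyp:w} is immediate with $c_0=1/2$ (leaves) and $c_1=(d+1)/2$ (interior vertices). For \ref{hyp:mix}, I would combine Lemma~\ref{lem:eval} with a canonical-path bound $\lambda_N^{-1}\le c(d)\,N\,|G_N|$ (in a tree each edge lies on the path between at most $|G_N|^2/4$ ordered pairs, of length $\le 2N$), so $\lambda_N^{-1}\ll |G_N|^{2-\epsilon}$ for any $\epsilon<1$. Assumptions \ref{hyp:dist} and \ref{hyp:zconv} are postulated. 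For \eqref{hyp:d}, a direct count with $(d+1)d^{N-1}$ leaves of weight $1/2$ and approximately $(d+1)d^{N-1}/(d-1)$ interior vertices of weight $(d+1)/2$ yields $w(G_N)/|G_N|\to 1$, so $\beta=1$.

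For \ref{hyp:iso1}, I would take $o_m\in{\mathbb G}_o$ an arbitrary fixed vertex for $m\le M'$, and $o_m\in{\mathbb G}_\lozenge$ a fixed vertex at the eventually constant height $h_m=|y_{m,N}|$ for $m>M'$. Since $B(y_m,r_N)\subset G_N$ avoids every leaf as soon as $r_N<|y_m|$, and since $G_N$, ${\mathbb G}_o$ and ${\mathbb G}_\lozenge$ are all trees in which every non-leaf vertex has exactly $d+1$ neighbors (leaves of ${\mathbb G}_\lozenge$ sitting at height $0$), one obtains a canonical isomorphism of rooted trees between $B(y_m,r_N)$ and $B(o_m,r_N)$ in the corresponding limit model. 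This uses \eqref{tr:bd} (so that $r_N\to\infty$ can be kept below $\delta N\le|y_m|$) for $m\le M'$ and \eqref{tr:d} together with $r_N<N-h_m$ for $m>M'$.

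Assumptions \ref{hyp:inc}, \ref{hyp:iso2} and \ref{hyp:disj} require two different constructions. For $m\le M'$, I set ${\hat{\mathbb G}}_m={\mathbb G}_o$, apply Lemma~\ref{lem:met} to $\{y_{m,N}:m\le M'\}$ in the tree metric with $a=r_N$ and $b=2$ to obtain centers $y^*_i$ and a radius $p$ with $r_N\le p\le 2^{2M'}r_N$, and take $C_m=B(y^*_{i(m)},2p)$; the resulting sets are pairwise equal or disjoint, and for $r_N\le c\delta N/2^{2M'+2}$ they avoid all leaves, so ${\bar C}_m$ is isomorphic to a closed ball in ${\mathbb G}_o$. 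For $m>M'$, I set ${\hat{\mathbb G}}_m={\mathbb G}_\lozenge$, fix a diverging sequence $H_N=o(N)$ with $H_N>\max_{m>M'}h_m$, and take $C_m$ to be the subtree rooted at the ancestor of $y_m$ at height $H_N$; any two such subtrees have roots at the same height and are hence either identical or rooted at incomparable vertices (and thus disjoint), while ${\bar C}_m$ is isomorphic to the corresponding rooted subtree in ${\mathbb G}_\lozenge$. Cross-type disjointness ($m\le M'$, $m'>M'$) follows because vertices of $C_m$ have heights $\gtrsim\delta N$ while those of $C_{m'}$ have heights $\le H_N=o(N)$. Assumption \ref{hyp:trans} follows from \eqref{trk0} (exponential decay on ${\mathbb G}_o$) and \eqref{trk1} (decay $n^{-3/5}$ on ${\mathbb G}_\lozenge$ for ${\mathtt y}$ in a bounded ball), the latter forcing the choice $\epsilon<1/10$.

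The main task is verifying \ref{hyp:k}, for which I would apply Lemma~\ref{lem:kch}, using $d(y_0,y)\ge p-\rho_0$ for $m\le M'$ and $d(y_0,y)\ge H_N-h_m-\rho_0$ for $m>M'$ (in the latter case $\partial(C_m^c)=\{a_m\}$), and choosing $r_N$ and $H_N$ to grow at least like $C\log N$ for a sufficiently large constant $C$. Splitting the sum $\sum_{n=1}^{[\lambda_N^{-1}|G_N|^\epsilon]}\sup p^G_n(y_0,y)/\sqrt n$ at $n=N^3$ using \eqref{trk2}: the $n\le N^3$ part is bounded by $cN^{3/2}e^{-c(d)((r_N\wedge H_N)-\rho_0)}\to 0$ for $C$ large, while the $n>N^3$ part contributes $O(d^{-N(1-\epsilon)/2}\sqrt{N})+O(N^{-3/10})\to 0$. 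Once all assumptions are verified, Theorem~\ref{thm:d} yields the claim with ${\mathbb G}_m\in\{{\mathbb G}_o,{\mathbb G}_\lozenge\}$ and $\beta=1$. The main technical obstacle is the simultaneous choice of $r_N$, $p$, $H_N$ and $\epsilon$ ensuring that the ``identical or disjoint'' template \ref{hyp:disj} holds across the three regimes of pairs, together with the use of two distinct auxiliary models ${\hat{\mathbb G}}_m$, mirroring the role of ${\mathbb Z}^d_+$ in Remark~\ref{rem:aux}.
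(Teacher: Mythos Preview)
Your proposal is correct and follows the same overall template as the paper: verify \ref{hyp:w}--\ref{hyp:k} and \eqref{hyp:d}, then invoke Theorem~\ref{thm:d}.  The difference lies in how the sets $C_m$ are built.  The paper applies Lemma~\ref{lem:met} \emph{once} to all $M$ points $y_1,\ldots,y_M$ with $a=\tfrac{\delta}{4^M 10}N$, obtaining balls $C_m=B(y_m^*,2p)$ of radius comparable to $N$; it then observes that such a $\bar C_m$ contains leaves iff $m>M'$ and is otherwise leaf-free, so one can take $\hat{\mathbb G}_m=\mathbb G_m$ uniformly.  Because $p\asymp N$, the distance from $\partial(C_m^c)$ to $B(y_m,\rho_0)$ is of order $N$, and the $n\le N^3$ part of \eqref{kch} is killed by $e^{-cN}$ with no tuning of constants.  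Your construction instead splits into two regimes (balls of radius $\sim r_N$ for $m\le M'$, subtrees of height $H_N$ for $m>M'$), allows these scales to be as small as $C\log N$, and then has to check cross-type disjointness for \ref{hyp:disj} and balance $r_N,H_N,\epsilon$ so that $N^{3/2}e^{-c(d)(r_N\wedge H_N)}\to 0$.  This works, but the paper's single-scale covering is cleaner and avoids the extra parameters and case analysis.  For \ref{hyp:mix} the paper quotes the Aldous--Fill bound $\lambda_N^{-1}\le c(d)|G_N|$; your canonical-path bound $\lambda_N^{-1}\le c(d)N|G_N|$ is weaker but still ample for \ref{hyp:mix}.
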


\begin{proof}
Once more, we check \ref{hyp:w}-\ref{hyp:k} and (\ref{hyp:d}) and apply Theorem~\ref{thm:d}. 
It is immediate to check \ref{hyp:w}. For the estimate \ref{hyp:mix}, the degree of the root of the tree does not play a role, as can readily be seen from the definition (\ref{def:spec}) of $\lambda_N$. We can hence change the degree of the root from $d+1$ to $d$ and apply the estimate from Aldous and Fill in \cite{AF}, Chapter 5, p.~26, equation (59). Combined with Lemma~\ref{lem:eval} relating the discrete- and continuous time spectral gaps, this shows that $c(d) |G_N|^{-1} \leq \lambda_N$. In particular, \ref{hyp:mix} holds.
We are assuming \ref{hyp:dist} and \ref{hyp:zconv} in the statement.
For \ref{hyp:iso1}, we define 
\begin{align*}
r_N &= \frac{1}{4^M10} \bigl( \min_{1 \leq m < m' \leq M} d(x_m,x_{m'}) \wedge \delta N \bigr), \textrm{ as well as}\\
o_m &= o \textrm{ for $1 \leq m \leq M'$ and } o_m = (|y_m|;{\mathbf 1}), \textrm{ for $M'<m \leq M$,}
\end{align*}
where $\mathbf 1$ denotes the infinite sequence of ones.
Then for $1 \leq m \leq M'$, the ball $B(y_m,r_N)$ does not contain any leaves of $G_N$ for large $N$, so there is an isomorphism $\phi_m$ mapping $B(y_m,r_N)$ to $B(o,r_N) \subset {\mathbb G}_o$. For $M' < m \leq M$, note that assumption (\ref{tr:d}) and the choice of $r_N$ imply that for large $N$, all vertices in the ball $B(y_m,r_N)$ have a common ancestor $y_* \in G_N \setminus (B(y_m,r_N) \cup \{o\})$ (we can define $y_*$ as the first vertex not belonging to $B(y_m,r_N)$ on the shortest path from $y_m$ to $o$). We now associate a label $l(y)$ in $\{1, \ldots, d\}$ to all descendants $y$ of $y_*$ in the following manner: We label the $d$ children of $y_*$ by $1, \ldots, d$ such that the vertex belonging to the shortest path from $y_*$ to $y_m$ is labelled $1$. We then do the following for any descendant $y$ of $y_*$: If one of the children of $y$ belongs to the shortest path from $y_*$ to $y_m$, we associate the label $1$ to this child and associate the labels $2, \ldots, d$ to the remaining $d-1$ children in an arbitrary fashion. If none of the children of $y$ belong to the shortest path from $y_*$ to $y_m$, we label the $d$ children of $y$ by $1, \ldots, d$ in an arbitrary fashion. Having labelled all descendants of $y$ in this way, we define for any descendant $y$ of $y_*$ the finite sequence $s(y)$ by $l(y),l(y_1), \ldots, l(y_{d(y,y_*)-1})$, where $(y,y_1, \ldots, y_{d(y,y_*)-1}, y_*)$ is the shortest path from $y$ to $y_*$.
Then the function $\phi_m$ from $B(y_m,r_N)$ to ${\mathbb G}_\lozenge$, defined by
\begin{align}
\phi_m(y)= (|y|; s(y),1,1,\ldots), \label{eq:tr0.0}
\end{align}
is an isomorphism from $B(y_m,r_N)$ into ${\mathbb G}_\lozenge$ mapping $y_m$ to $(|y_m|;{\mathbf 1})$, as required. Hence, \ref{hyp:iso1} holds. As in the previous examples, we now choose the sets $C_m$ ensuring that the probability of escaping to the complement of a large box from the boundaries of $B_m$ (cf.~(\ref{def:balls})) is large. We define the auxiliary graphs as ${\hat {\mathbb G}}_m = {\mathbb G}_m$. 
As in the example of the box, we then apply Lemma~\ref{lem:met} to find the required sets $C_m$. Applied to the points $y_1, \ldots, y_m$, with $a =  \frac{\delta}{4^M10} N$ and $b=2$, Lemma~\ref{lem:met} yields points $y^*_1, \ldots, y^*_M$, some of which may be identical, and a $p$ between $ \frac{\delta}{4^M10} N$ and $\frac{\delta}{10} N$ such that
\begin{align}
 \textrm{either } C_m = C_{m'} \textrm{ or } C_m \cap C_{m'} = \emptyset \textrm{ for } C_m =  B(y^*_m, 2p), \, 1 \leq m \leq M, \label{eq:tr0}
\end{align}
and such that the balls with the same centers and radius $p$ still cover $\{y_1, \ldots, y_M\}$. Since $r_N \leq p$, we can associate a set $C_m$ to any $B(y_m,r_N)$ such that \ref{hyp:inc} holds. Concerning \ref{hyp:iso2}, note that the definition of $r_N$ immediately implies that ${\bar C}_m$ contains leaves of $G_N$ if and only if $m > M'$ and in this case all vertices in ${\bar C}_m$ have a common ancestor in $G_N \setminus ({\bar C}_m \cup \{o\})$ (one can take the first vertex not belonging to ${\bar C}_m$ on the shortest path from $y_m$ to $o$). We can hence define the isomorphisms $\psi_m$ from ${\bar C}_m$ into ${\hat {\mathbb G}}_m$ in the same way as we defined the isomorphisms $\phi_m$ above, so \ref{hyp:iso2} holds. Assumption \ref{hyp:disj} directly follows from (\ref{eq:tr0}). We now turn to \ref{hyp:trans}. For $1 \leq m \leq M'$, this assumption is immediate from (\ref{trk0}). For $M'<m \leq M$, note that the isomorphism $\psi_m$, defined in the same way as $\phi_m$ in (\ref{eq:tr0.0}), preserves the height of any vertex.  In particular, $|\psi_m(y_m)|$ remains constant for large $N$ by (\ref{tr:d}) and the estimate required for \ref{hyp:trans} follows from (\ref{trk1}).
In order to check \ref{hyp:k}, we again use Lemma~\ref{lem:kch} and only verify (\ref{kch}). Note that for any $1 \leq m \leq M$, the distance between vertices $y_0 \in \partial (C_m^c)$ and $y \in B(y_m, \rho_0)$ is at least $c(\delta, M, \rho_0) N$. With the estimate (\ref{trk2}) and the bound on $\lambda_N^{-1}$ shown under \ref{hyp:mix}, we find that the sum in (\ref{kch}) is bounded by
\begin{align*}
 N^3 cd^{-c(\delta, M, \rho_0)N} + c(d) \Bigl( |G_N|^{-(1-\epsilon)/2} + \sum_{n=N^3}^{\infty}  n^{-3/5-1/2}  \Bigr),
\end{align*}
which tends to $0$ as $N$ tends to infinity for $0 < \epsilon <1$. We have thus shown that \ref{hyp:k} holds.
Finally, we check (\ref{hyp:d}). To this end, note first that all vertices in $G_{N-1} \subset G_N$ have degree $d+1$ in $G_N$, and the remaining vertices of $G_N$ (the leaves) have degree $1$. Hence,
\begin{align}
 \frac{w(G_N)}{|G_N|} = \frac{|G_{N-1}|}{|G_N|} \frac{d+1}{2} + \Bigl( 1- \frac{|G_{N-1}|}{|G_N|} \Bigr) \frac{1}{2}. \label{eq:tr1}
\end{align}
Now $G_N$ contains one vertex of depth $0$ (the root) and $(d+1)d^{k-1}$ vertices of depth $k$ for $k=1, \ldots, N$. It follows that $|G_N| = 1 + (d+1) (1+ d+ \ldots + d^{N-1}) = 1 + \frac{d+1}{d-1} (d^N-1)$ and that $\lim_N |G_{N-1}|/|G_N| = 1/d$. With (\ref{eq:tr1}), this yields
\begin{align*}
 \lim_N \frac{w(G_N)}{|G_N|} = \frac{d+1}{2d} + \frac{d-1}{2d} = 1.
\end{align*}
Therefore, (\ref{hyp:d}) holds with $\beta=1$. The result follows by application of Theorem~\ref{thm:d}.
\end{proof}

\begin{remark}
\textup{
The last theorem shows in particular that the parameters of the Brownian local times and hence the parameters of the random interlacements appearing in the large $N$ limit do not depend on the degree $d+1$ of the tree. Indeed, we have $\beta=1$ for any $d \geq 1$. The above calculation shows that this is an effect of the large number of leaves of $G_N$. This behavior is in contrast to the example of the Euclidean box treated in Theorem~\ref{thm:box}, where the effect of the boundary on the levels of the appearing random interlacements is negligible.
}
\end{remark}

\end{document}